\newcommand{\R}{\ensuremath{\mathbb R}}
\newcommand{\E}{\ensuremath{\mathbb E}}
\newcommand{\W}{\ensuremath{\mathcal W}}
\newcommand{\bp}{\ensuremath{\mathbb P}}
\newcommand{\G}{\ensuremath{\mathcal G}}
\newcommand{\boxnorm}[1]{\ensuremath{\| #1 \|_\Box}}
\newcommand{\mapdef}[1]{\ensuremath{\overset{#1}{\longrightarrow}}}
\def\tangle#1{\langle #1 \rangle}
\DeclareMathOperator{\Hom}{\ensuremath{Hom}}
\DeclareMathOperator*{\cart}{\times}
\newtheorem{theorem}[equation]{Theorem}
\newtheorem{prop}[equation]{Proposition}
\newtheorem{cor}[equation]{Corollary}
\newtheorem{lemma}[equation]{Lemma}
\newtheorem{utheorem}{\textrm{\textbf{Theorem}}}
\theoremstyle{definition}
\newtheorem{remark}[equation]{Remark}
\newtheorem{definition}[equation]{Definition}
\newtheorem{ex}[equation]{Example}
\numberwithin{equation}{section}
\begin{document}

\title{Model-free consistency of graph partitioning}

\author[Peter Diao]{Peter Diao$^1$}

\address{$^1$Statistical and Applied Mathematical Sciences Institute,
Research Triangle Park, NC, USA; email: {\tt peter.z.diao@gmail.com} (P.
Diao)}

\author[Dominique Guillot]{Dominique Guillot$^2$}
\address{$^2$Department of Mathematical Sciences, University of
Delaware, Newark, DE, USA;\hfill\break
email: {\tt dguillot@udel.edu}}

\author[Apoorva Khare]{Apoorva Khare$^3$}
\address{$^3$Departments of Mathematics and Statistics, Stanford
University, Stanford, CA, USA;
email: {\tt khare@stanford.edu} (A. Khare), 
{\tt brajaratnam01@gmail.com} (B. Rajaratnam)} 

\author[Bala Rajaratnam]{Bala Rajaratnam$^3$}

\date{\today}

\subjclass[2010]{05C50, 47B34, 62H30, 47B38, 62G20}

\keywords{Graph partitioning, consistency, graphons, colored graph
limits, node-level statistic, spectral clustering, normalized Laplacian}

\thanks{P.D., D.G., A.K., and B.R.~are partially supported by the
following: US Air Force Office of Scientific Research grant award
FA9550-13-1-0043, US National Science Foundation under grant DMS-0906392,
DMS-CMG 1025465, AGS-1003823, DMS-1106642, DMS-CAREER-1352656, Defense
Advanced Research Projects Agency DARPA YFA N66001-11-1-4131, the UPS
Foundation, SMC-DBNKY, an NSERC postdoctoral fellowship, and the
University of Delaware Research Foundation.}

\begin{abstract}
In this paper, we exploit the theory of dense graph limits to provide a
new framework to study the stability of graph partitioning methods, which
we call {\em structural consistency}. Both stability under perturbation
as well as asymptotic consistency (i.e., convergence with probability $1$
as the sample size goes to infinity under a fixed probability model)
follow from our notion of structural consistency. By formulating
structural consistency as a continuity result on the graphon space, we
obtain robust results that are completely independent of the data
generating mechanism.
In particular, our results apply in settings where observations are not
independent, thereby significantly generalizing the common probabilistic
approach where data are assumed to be i.i.d.

In order to make precise the notion of structural consistency of graph
partitioning, we begin by extending the theory of graph limits to include
vertex colored graphons. We then define \textit{continuous node-level
statistics} and prove that graph partitioning based on such statistics is
consistent. Finally, we derive the structural consistency of commonly
used clustering algorithms in a general model-free setting. These include
clustering based on local graph statistics such as homomorphism
densities, as well as the popular spectral clustering using the
normalized Laplacian.

We posit that proving the continuity of clustering algorithms in the
graph limit topology can stand on its own as a more robust form of
model-free consistency. We also believe that the mathematical framework
developed in this paper goes beyond the study of clustering algorithms,
and will guide the development of similar model-free frameworks
to analyze other procedures in the broader mathematical sciences.
\end{abstract}
\maketitle

\tableofcontents

\section{Introduction}

In this paper, we characterize continuity of certain maps from graphon
space to colored graphon space defined by popular graph clustering
algorithms such as spectral clustering.
Grouping or clustering objects according to their similarity is a
fundamental problem in many areas of modern science. The objective of
clustering is to identify such \textit{clusters} in data, where objects
assigned to the same cluster look roughly similar, whereas objects
belonging to different clusters are different. Various strategies have
been proposed to formulate and solve clustering problems in a rigorous
way (see e.g.~\cite{cluster-analysis}). Nevertheless, despite the
tremendous importance of this problem, very little is known about the
theoretical properties of many popular clustering techniques. Two such
fundamental properties are:
(i) stability under perturbation of the data; and
(ii) asymptotic consistency under specific data generating mechanisms
(i.e., convergence with probability $1$ as the sample size goes to
infinity under a fixed probability model).
Previous attempts to give theoretical justifications for both of these
properties have relied on a choice of a particular probability model.

In the present paper, we address these problems in a model-free way, by
recognizing both problems as following from the continuity of certain
maps on graphon space \cite{Lovasz:2013}.
To elaborate, clustering data can naturally be formulated as a graph
partitioning problem. Indeed, in applications, one generally uses a
\textit{similarity function} $f: \mathcal{X} \times \mathcal{X} \to \R$
to construct a similarity matrix $W = (w_{ij})$ where $w_{ij} = f(x_i,
x_j)$ measures the similarity between two data points $x_i, x_j$ in a
suitable space $\mathcal{X}$. The matrix $W$ is naturally identified with
a weighted graph. Another common approach is to use $W$ to build an
unweighted graph, where nodes are adjacent if and only if they are
similar enough (see e.g.~\cite[Section 2.2]{vonLuxburg-tutorial} for more
details). The problem of identifying clusters within data can thus be
reduced to partitioning the vertices of a graph, in such a way that nodes
belonging to the same cluster are well-connected together, whereas
different clusters share fewer edges.
In particular, all of the above algorithms are examples of graph
partitioning algorithms, that take a finite graph $G = (V(G), E(G))$ and
yield a partition of the vertices $V(G)$.
Formally, we can describe this as a map from finite graphs $(V(G),E(G))$
to \textit{$S$-colored graphs} $(V(G), E(G), c_G: V(G) \to S)$, where $S$
is a finite set.

Giving theoretical justification for a choice of graph partitioning
algorithm is a notoriously ill-defined problem, and a satisfactory
solution has been elusive to the broader mathematics community. In this
paper we propose that such a graph partitioning algorithm ought to
satisfy a form of \textit{model-free structural consistency}: if the
structures of the input graphs converge then the structures of the
partitioned graphs should also converge.
Such structural consistency subsumes both stability under perturbation as
well as asymptotic consistency as special cases. Formally, the
aforementioned map from graphs to $S$-colored graphs should be continuous
under the canonical dense graph limit topologies, developed over graphs
in \cite{BCL:2010, BCLSV:2008, BCLSV:2012, LS:2006}, and generalized by
us to $S$-colored graphs.
We characterize continuity for a broad class of graph partitioning
algorithms, and prove model-free structural consistency for popular graph
partitioning algorithms such as spectral clustering.

To explain how our approach relaxes the assumptions in previous work in
the area, consider the case of spectral clustering.  The asymptotic
consistency of spectral clustering has been studied in many papers
\cite{belkin2003, belkin2005, gine2006, hein2006, hein2005, lafon2004,
vonLuxburg2008, vonLuxburg2004}. As far as we are aware, all of these
results assume that the similarity graph $G_n$ is being generated
according to the following general procedure: pick $(x_i)_{i =1}^\infty$
i.i.d.~from some probability space $(\mathcal{X},\mu)$ and then compute
the similarity between node $i$ and $j$ to be $f(x_i,x_j)$ for some
function $f: \mathcal{X} \times \mathcal{X} \to \R$.
The aforementioned papers have worked to prove consistency of spectral
clustering for more and more general probability spaces $\mathcal{X}$ and
similarity functions $f$, usually exploiting some underlying geometric
structure, and to our knowledge with the most general results established
in \cite{vonLuxburg2008}.

Compared to previous work in the area, our approach provides a way to
establish the consistency of clustering algorithms without making
\textit{any} assumption about the exact form of the data generating
mechanism. Our only assumption is that data is provided in a
\textit{coherent} way. More precisely, we assume the graphs converge
structurally in the sense of the theory of dense graph limits. In
particular, when graphs are constructed from an i.i.d.~sequence $(x_i)_{i
=1}^\infty$ using a similarity function as above, it is known that the
resulting graphs converge almost surely to a limiting object (see Remark
\ref{GnW}). Therefore, the present paper extends previous results from
the literature.
We remark that the theory of graphons is indispensable to this paper for
two reasons:
(i) it provides a language to formulate model-free structural consistency
as a continuity result; and
(ii) its canonical topology ensures the broad applicability of the
framework, as we now explain.

Replacing the i.i.d.~assumption by the significantly weaker paradigm of
dense graph convergence, allows us to provide a novel statistical
framework to help handle two common problems in modern data analysis:
lack of a plausible data generating mechanism for complex data, and lack
of a mathematical representation space for inferred objects with no
linear structure. 
For these reasons, we believe our approach has an important advantage in
network analysis. Indeed, finding models that reflect the complex
heterogeneities of massive real-world networks still remains an important
challenge \cite{Leskovec2009}. The assumption that observations are
independent is also rarely verified in practice. In contrast, our
\textit{model-free} approach provides consistency results in a setting
that is broadly applicable.

The rest of the paper is structured as follows. In Section
\ref{Sinformal}, we provide an informal description of our main results.
We briefly review the theory of graph limits in Section \ref{Sreview},
and show in Section \ref{section:prelim} how the classical theory of
graph limits can naturally be extended to study the space of colored
graph sequences and their limit objects. In Section
\ref{section:nodelvl}, we study a common method of clustering the
vertices of a graph by computing some statistic for each vertex such as
its degree. We term these \textit{node-level statistics}, and prove a
general theorem about the structural consistency of such clustering
algorithms. Finally, in Section \ref{section:spectral}, we study the
structural consistency of spectral clustering in the graph limit
framework. We demonstrate that normalized spectral clustering is
structurally consistent under mild assumptions. We also demonstrate
problems with the analogous unnormalized procedure, as was previously
observed in \cite{vonLuxburg2008}. Proofs of technical results in Section
\ref{section:prelim} are provided in Appendix \ref{Acolor}, and in
Appendix \ref{AppB} we extend the Riesz--Fischer Theorem to any complete
metric space, as it is required to formulate one of our main results,
Theorem \ref{TclusterGen}.

\subsection{Informal statements of results}\label{Sinformal}

In this subsection, we explain informally the main results in the present
paper. The technical details and the results are discussed in full, in
later sections.

We begin by introducing the ingredients used to state and prove the main
results. The first notion is that of \textit{graphons}, which are
limiting objects of graph sequences. Graphons are measurable functions $W
: [0,1]^2 \to [0,1]$ that are symmetric, i.e., $W(x,y) = W(y,x)$. Every
graph is naturally identified with a graphon (see Equation
\eqref{Egraphon}). 

The space of graphons is equipped with a canonical topology. Suppose
$(G_n)_{n \geq 1}$ is a sequence of graphs. Let $t(K_2, G_n)$ denote the
\textit{edge density} of $G_n$, i.e., the proportion of pairs of vertices
of $G_n$ that are adjacent. More generally, given a simple graph $H$, we
denote by $t(H,G_n)$ the proportion of maps $H \to G_n$ that are edge
preserving. We say that a sequence of graphs $G_n$ is
\textit{left-convergent} if $t(H, G_n)$ is a convergent sequence of real
numbers for every simple graph $H$. The motivation behind
left-convergence comes from the notion that graphs become more and more
similar if their edge densities, triangle densities, etc., are all
convergent. A left-convergent sequence of graphons $(G_n)_{n \geq 1}$ is
naturally identified with a limit graphon. In order to do so, one first
extends the notion of homomorphism density to graphons (see Equation
\eqref{Ehom}), and then show that there exists a graphon $W \in
\W_{[0,1]}$ such that $t(H, G_n) \to t(H, W)$. The resulting topology is
metrizable by the \textit{cut-norm} (see Equation \eqref{Ecutnorm}). The
cut-norm provides a natural way of comparing graphs, even if they have
different numbers of vertices. Moreover, under the cut-norm, the space
$\W_{[0,1]}$ is a compact pseudo-metric space.

For more details, the reader is referred to Section \ref{Sreview}, and to
\cite{BCLSV:2008, Lovasz:2013} for a comprehensive introduction to the
theory of graphons.

In this paper, we introduce a new mathematical framework to study the
structural consistency of clustering algorithms. Given a graph $G$, we
identify a clustering of the vertices of $G$ with a coloring of $G$,
i.e., a map $c_G : V(G) \to S$ that assigns a ``color'' to every vertex
of $G$. We say that the clustering procedure is \textit{structurally
consistent} if for every left-convergent sequence of graphs $G_n$, the
resulting sequence of \textit{colored} graphs is also convergent (under
an appropriate topology similar to the canonical topology in the graphon
space). Note that in previous work in the literature, a clustering
procedure is consistent if the graphs $G_n$ with their colorings converge
whenever the graphs $G_n$ are generated i.i.d.~from a probability model.
By the theory of dense graph limits, one can show that each such sequence
$(G_n)_{n \geq 1}$ is convergent almost surely (see Remark \ref{GnW}).
Our approach thus significantly generalizes previous work by establishing
structural consistency in a ``model-free'' way, and without assuming
independence of the samples.

We now discuss our first main result, a very general clustering recipe.
Given a graph, there are several ways to cluster its nodes based on local
statistics. For instance, a simple clustering procedure involves
clustering nodes according to whether their degree (or edge-density) is
above or below a certain threshold value. More generally, one can work
with a finite collection of local statistics such as edge-densities,
triangle counts, and other graph homomorphism densities, where the images
of these graph morphisms involve the given node. Now the nodes are
clustered based on the tuple of values of such local statistics; note
that such tuples lie in Euclidean space.

In our first main result, we distill the essence of these clustering
recipes into the notion of a \textit{node-level statistic}. This is a
continuous map that sends a pair -- a graph(on) and a node on it -- to a
tuple in Euclidean space as above, or in full generality, to an arbitrary
metric space $X$. Our first main result establishes the structural
consistency of such general clustering procedures.\medskip

\noindent \textbf{Theorem \ref{TclusterGen}.} (See Section
\ref{section:nodelvl}.)
\textit{Fix a metric space $(X,d_X)$. Clustering according to any
continuous $X$-valued node-level statistic $f$ is structurally consistent
with respect to graph convergence. Namely, if a sequence of graphs is
convergent in the cut-norm, then clustering according to $f$ yields a
sequence of colored graphs that is also convergent.}\medskip

As a concrete example, Theorem \ref{TclusterGen} implies the structural
consistency of degree-based clustering as described above -- see Theorem
\ref{Tdeg-cluster} for a precise formulation. 

We remark here that partitioning a graph according to the degree
statistic was previously studied in the context of nonparametric graphon
estimation; see e.g.~\cite{BCCG}. In that work, the partitioning of nodes
is an intermediary step towards graphon estimation. In contrast, in the
present paper, we are chiefly concerned with the structural consistency
of the graph partitioning step itself. Furthermore, we do not take the
graphon as a nonparametric generating mechanism for graphs, but rather as
a general limit object for graphs in the graphon topology.

Note that Theorem \ref{TclusterGen} decouples the clustering recipe from
any graph generating mechanism, i.i.d.~or not, and assumes only that the
graph sequence converges in a canonical topology. Thus, Theorem
\ref{TclusterGen} provides a very general and broadly applicable recipe
for clustering. 

As special cases of Theorem \ref{TclusterGen}, we mention two algorithms
studied in the paper: (a) the aforementioned instances of clustering
according to tuples of homomorphism densities (see Section
\ref{Shomdensities}); and (b) spectral clustering according to the
normalized Laplacian (see Section \ref{section:spectral}). We believe the
result should also be broadly applicable to other popular clustering
algorithms, with minimal assumptions on the graph generation
process.\smallskip

The remaining two main results of the paper involve the structural
consistency of spectral clustering. The spectral clustering procedure
involves working with the \textit{normalized Laplacian} of a graph, and
more generally, of a graphon. Our second main result demonstrates that
the normalized Laplacians of a convergent sequence of graphons are also
convergent.\medskip

\noindent \textbf{Theorem \ref{Tconv_norm_lap}.} (See Section
\ref{SnormLap}.)
\textit{Suppose $W_n$ is a sequence of graphons that converges in
cut-norm to a graphon $W_0$, whose degree function $d_0(x)$ is positive
for almost every $x \in [0,1]$. Let $L'_{W_n}$ denote the corresponding
normalized Laplacian for $n \geq 0$. Then $L'_{W_n}$ converges to
$L'_{W_0}$.}\medskip

Theorem \ref{Tconv_norm_lap} extends the corresponding result in
\cite{vonLuxburg2008} without the assumption that the degree functions
are bounded below by a positive constant, and without the assumption that
the graphs $G_n$ are generated by an i.i.d.~mechanism.

It may be wondered if the assumption that $d_0(x) > 0$ a.e.~$x$, is
itself required in Theorem \ref{Tconv_norm_lap}. In Example
\ref{EnormLap}, we will show that this is indeed the case in order to
obtain a reasonable theory of consistency of spectral clustering.

Finally, we turn to our last main result, which proves structural
consistency of normalized spectral clustering in the model-free setting
of graph limits.\medskip

\noindent \textbf{Theorem \ref{TSpClusteringNorm}.} (See Section
\ref{Sconsistency}.)
\textit{Under appropriate assumptions, if $W_n$ is a convergent sequence
of graphons, and $(W_n, c_n)$ is a coloring of $W_n$ obtained via
normalized spectral clustering, then $(W_n, c_n)$ is also
convergent.}\medskip

Theorem \ref{TSpClusteringNorm} establishes the structural consistency of
the widely used normalized spectral clustering technique, without making
any assumptions on the data generating model. Note that this implies the
classical notion of statistical consistency for normalized spectral
clustering.

We believe the approach we provide in this paper can also be applied to
other clustering procedures. More generally, it is our hope that the
philosophy and framework developed in the paper will be used as an
inspiration to establish model-free consistency results for statistical
estimation and machine learning procedures coming from various areas.

\section{Review of dense graph limits}\label{Sreview}

We now briefly review dense graph limit theory \cite{BCLSV:2008,
Lovasz:2013}. This section serves to set notation, as well as to motivate
the next section on colored graph limit theory. The reader who is already
familiar with the theory of dense graph limits can safely skip this
section.

A \textit{graphon} is a bounded symmetric measurable function $f: [0,1]^2
\to [0,1]$. Each finite simple labelled graph $G$ with vertex set $V(G) =
\{ 1, 2, \dots, n \}$ can be naturally identified with the following
graphon $f^G$:
\begin{equation}\label{Egraphon}
f^G(x,y) := {\bf 1}_{(\lceil nx \rceil, \lceil ny \rceil) \in E(G)} =
\begin{cases}
    1, 	&\text{if $(\lceil nx \rceil, \lceil ny \rceil)$ is an edge in
    $G$,}\\
    0,	&\text{otherwise.}
\end{cases}
\end{equation}

The topology on isomorphism classes of finite simple graphs can be
described as follows. The graphon space $\W_{[0,1]}$ sits inside
$\mathcal{W}$, the vector space of bounded symmetric measurable functions
$f: [0,1]^2 \to \R$. Recall that $\W$ is equipped with a seminorm called
the \textit{cut-norm}
\begin{equation}\label{Ecutnorm}
\|f\|_\Box := \sup_{A,B \subset [0,1]} \left| \int_{A \times B} f(x,y)\
dx\ dy \right|
\end{equation}

\noindent where the supremum is taken over all Lebesgue measurable
subsets $A,B \subset [0,1]$. The group of measure-preserving bijections
$S_{[0,1]}$ acts on $\W_{[0,1]}$ as follows: given $\sigma \in S_{[0,1]}$
and $f \in \W_{[0,1]}$, define $f^\sigma(x,y) := f(\sigma(x),\sigma(y))$.
Now define
\begin{equation*}
\delta_\Box(f,g) := \inf_{\psi \in S_{[0,1]}} \boxnorm{f - g^\psi}.
\end{equation*}

Observe that $\delta_\Box(f^G, f^{G'}) = 0$ whenever $G,G'$ are
isomorphic finite simple graphs, so $\delta_\Box$ metrizes convergence of
isomorphism classes of finite graphs (up to blowups).

The topology induced by $\delta_\Box$ can also be described using
homomorphism densities.  Given graphs $G = (V(G), E(G))$, $H = (V(H),
E(H))$, denote by $\Hom(H,G)$ the set of edge-preserving maps from
$V(H)$ into $V(G)$, and define the homomorphism densities as follows:
\begin{equation}\label{Ehom-density}
t(H,G) := \frac{|\Hom(H,G)|}{|V(G)|^{|V(H)|}}. 
\end{equation}

\noindent Now a sequence of finite simple graphs $(G_n)_{n=1}^\infty$ is
said to \textit{left-converge} if for all finite simple graphs $H$, the
sequence $t(H,G_n)$ converges as $n \to \infty$. Intuitively, a graph
sequence $(G_n)$ left-converges if the graphs $G_n$ become more and more
similar, in that their edge densities, triangle densities, and so on, all
converge.

Observe that the definition of homomorphism densities extends to
arbitrary graphons $f$ as follows:
\begin{equation}\label{Ehom}
t(H,f) := \int_{[0,1]^k} \prod_{(i,j) \in E(H)} f(x_i, x_j)\ d x_1 \cdots
d x_k, \qquad k = |V(H)|.
\end{equation}

\noindent This is compatible with the graph statistics $t(H,G)$, in that
$t(H,f^G) = t(H,G)$ for all finite simple graphs $H,G$. An important
result in the theory of graphons is that $\delta_\Box$ metrizes
left-convergence \cite[Theorem 3.8]{BCLSV:2008}. More precisely, upon
identifying graphons $W \sim W'$ whenever $\delta_{\Box}(W,W') = 0$, the
space $\W_{[0,1]} / \sim$ of equivalence classes of graphons is a metric
space. The following result explains how graphons are limiting objects
for left-convergent dense graph sequences.

\begin{theorem}[Borgs--Chayes--Lov\'asz--S\'os--Vesztergombi,
\cite{BCLSV:2008}]\label{thm:BCLSV2008}
Let $(W_n)_{n=1}^\infty \subset \W_{[0,1]}$ be a sequence of graphons.
Then the following are equivalent:
\begin{enumerate}
\item $t(H,W_n)$ converges for all finite simple graphs $H$, i.e., $W_n$
is left-convergent.
\item $W_n$ is a Cauchy sequence in the $\delta_\Box$ metric.
\item There exists $W \in \W_{[0,1]}$ such that $t(H,W_n) \to t(H,W)$ for
all finite simple graphs $H$.
\end{enumerate}
Furthermore, $t(H,W_n) \to t(H,W)$ for all finite simple graphs $H$ for
some $W \in \W_{[0,1]}$ if and only if $\delta_\Box(W_n,W) \to 0$.
\end{theorem}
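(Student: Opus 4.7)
The plan is to reduce everything to three ingredients from the standard graphon toolkit: the \emph{counting lemma}, compactness of $(\W_{[0,1]}/{\sim}, \delta_\Box)$, and the fact that homomorphism densities separate points of $\W_{[0,1]}/{\sim}$. Once these are in place, all implications in the theorem follow from short compactness arguments.

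First I would establish the counting lemma: for any finite simple graph $H$ and graphons $W,W' \in \W_{[0,1]}$,
\[ |t(H,W) - t(H,W')| \leq |E(H)| \cdot \delta_\Box(W,W'). \]
This is proved by telescoping over the edges of $H$: write $\prod_{(i,j)\in E(H)} W(x_i,x_j) - \prod_{(i,j)\in E(H)} W'(x_i,x_j)$ as a sum in which each summand changes exactly one edge factor from $W$ to $W'$. Integrating out the variables not incident to that edge reduces each summand to an expression of the form $\int (W-W')(x,y)\,g(x)\,h(y)\,dx\,dy$ with $\|g\|_\infty, \|h\|_\infty \leq 1$, which is controlled by $\boxnorm{W-W'}$. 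The bound passes to $\delta_\Box$ since $t(H,\cdot)$ is invariant under the $S_{[0,1]}$-action. This alone yields (3) $\Rightarrow$ (1), (2) $\Rightarrow$ (1), and the forward half of the ``furthermore'' clause.

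Next I would invoke the Lov\'asz--Szegedy compactness theorem for $(\W_{[0,1]}/{\sim}, \delta_\Box)$. Combined with the counting lemma, compactness delivers (2) $\Rightarrow$ (3) and (1) $\Rightarrow$ (3): extract any $\delta_\Box$-convergent subsequence $W_{n_k} \to W$; in case (2) the Cauchy property upgrades subsequential convergence to full convergence, after which the counting lemma gives $t(H,W_n) \to t(H,W)$; in case (1) the already-convergent scalars $t(H,W_n)$ must equal their subsequential limit $t(H,W)$. For (3) $\Rightarrow$ (2) and the reverse half of the ``furthermore'' clause, I need the separation property: $t(H,W) = t(H,W')$ for all finite simple graphs $H$ implies $\delta_\Box(W,W') = 0$. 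Granting this, every subsequence of $W_n$ admits a further $\delta_\Box$-convergent subsequence whose limit $W'$ satisfies $t(H,W') = \lim_n t(H,W_n) = t(H,W)$, and hence $\delta_\Box(W',W) = 0$. A subsequence-of-subsequence argument then forces $\delta_\Box(W_n,W) \to 0$, which in particular is Cauchy.

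The main obstacle is \emph{not} the scaffolding above but the two deep Lov\'asz--Szegedy inputs: compactness of $(\W_{[0,1]}/{\sim}, \delta_\Box)$, and separation of graphons by homomorphism densities. Both rest on the weak regularity lemma together with a sampling/martingale argument relating a graphon to the empirical distribution of its sampled subgraphs. Those substantial proofs are precisely what the theorem packages; the equivalence of (1)--(3) together with the ``furthermore'' clause is then essentially a formal consequence of the counting lemma plus these two inputs.
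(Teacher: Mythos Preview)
Your outline is correct and follows the standard route (counting lemma, Lov\'asz--Szegedy compactness, and separation of graphons by homomorphism densities), but there is nothing to compare against: the paper does not prove this theorem. It is stated in the review Section~\ref{Sreview} as a cited result from \cite{BCLSV:2008}, with no proof given.
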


\noindent \textbf{Sampling.}
In addition to the cut metric and left-convergence, a third, equivalent
way to think of graph convergence is via sampling. Given a graphon $W$,
let $\mathbb{H}(n,W)$ denote a random weighted graph generated by
sampling i.i.d.~variables $(X_i)_{i =1}^n$ uniformly on $[0,1]$, and then
setting $W(X_i,X_j)$ to be the weight between nodes $i$ and $j$. Given a
weighted graph $H$ with $n$ vertices, let $\mathbb{G}(H)$ denote the
graph $G$ on $n$ vertices where for $i > j$, $(i,j) \in E(G)$ with
probability $H(i,j)$ and $G$ is made symmetric. Now let $\mathbb{G}(n,W)
:= \mathbb{G}(\mathbb{H}(n,W))$.
Then the probabilities $\bp(\mathbb{G}(n,W) = H)$ can be computed from
the homomorphism densities $t(H,W)$ by inclusion-exclusion formulas
\cite[Section 5.2.3]{Lovasz:2013}. Therefore, the left-convergence of a
graphon sequence $W_n$ is equivalent to convergence of the sampling
densities $\mathbb{G}(k, W_n)$ for all $k$.

\begin{remark}\label{GnW}
The sampling distributions $\mathbb{H}(n,W)$ and $\mathbb{G}(n,W)$ are
used as nonparametric generative models for networks. Here we remark that
both models $\mathbb{G}(n,W)$ and $\mathbb{H}(n,W)$ concentrate around
$W$ in the cut-distance (see \cite[Lemma 10.16]{Lovasz:2013}). In
particular, $\mathbb{G}(n,W)$ and $\mathbb{H}(n,W)$ converge almost
surely to $W$.
Note that the general data generating mechanism employed in
\cite{vonLuxburg2008} falls within the class $\{ \mathbb{H}(n,W) : W \in
\W_{[0,1]} \}$, because an arbitrary probability distribution on a
compact metric measure space $\mathcal{X}$ can always be mapped in a
measure-preserving fashion onto $[0,1]$ with the Lebesgue measure. Under
this mapping, the continuous (symmetric) similarity function $k :
\mathcal{X} \times \mathcal{X} \to [0,\infty)$ defines an associated
kernel $W_k \in \W$, bounded above uniformly by $m_k := \max_{x,y \in
\mathcal{X}} k(x,y) > 0$. Therefore, graphs $G_n$ generated i.i.d.~in
this model satisfy:
\[
m_k^{-1} f^{G_n} \sim \mathbb{H}(n, m_k^{-1} W_k),
\]

\noindent whence the random weighted graph sequence $m_k^{-1} G_n$
converges almost surely to the graphon $m_k^{-1} W_k \in \W_{[0,1]}$ as
$n \to \infty$. Thus the framework in the present paper applies to the
general setting of \cite{vonLuxburg2008}.
\end{remark}

We conclude with two additional facts about the metric space
$(\W_{[0,1]}/\sim, \delta_\Box)$:
\begin{enumerate}
\item (See \cite{LS:2006}.) The countable set of graphons $f^G$ (running
over all finite simple graphs $G$) is dense in $(\W_{[0,1]}/\sim,
\delta_\Box)$.

\item As a consequence of the Weak Regularity Lemma in graph theory,
Lov\'asz and Szegedy showed in \cite{LS:2007} that $(\W_{[0,1]}/\sim,
\delta_\Box)$ is a compact metric space.
\end{enumerate}  

\section{Colored graph limit theory}\label{section:prelim}

We begin by showing how the theory of dense graph limits can be extended
to colored graph sequences and their limits. The proofs of the results
stated in this section are given in Appendix \ref{Acolor}, and leverage
the approach of \cite{Lovasz:2013}.

\subsection{Colored graphs}

Let $S$ be a finite set. Define an \textit{$S$-colored graph} $G$ to be a
triple
\[
(V(G), E(G), c_G : V(G) \to S),
\]
where $V(G)$ and $E(G) \subset V(G)^2$ are finite sets. (We assume that
$G$ does not have multiple edges or self-loops.) Now let $\G_S$ denote
the set of \textit{$S$-colored graphs}.

Given $G \in \G_S$ and $s \in S$, we let $V_s(G) := \{v \in V(G) : c_G(v)
= s\}$ denote the set of vertices of $G$ of color $s$. For $H, G \in
\G_S$, we define the \textit{colored homomorphism density} by: 
\[
t_S(H,G) := \frac{|\Hom_S(H,G)|}{|V(G)|^{|V(H)|}}, 
\]

\noindent where $\Hom_S(H,G)$ denotes the set of edge preserving maps
$\phi: V(H) \to V(G)$ such that $c_H = c_G \circ \phi$.\medskip

For instance, if $H$ denotes the graph with one vertex, colored $s$, then
$t_S(H,G)$ precisely equals $|V_s(G)| / |V(G)|$.

Note that the colored homomorphism densities naturally generalize the
usual homomorphism densities (see Equation \eqref{Ehom-density}) in the
case where the graphs are uncolored.

\subsection{Colored graphons and homomorphism densities}

We now come to the limiting objects of sequences of colored graphs. We
define an \textit{$S$-colored graphon} to be a pair of measurable maps
$(f_W,c_W)$ where $f_W: [0,1]^2 \to [0,1]$ is symmetric and $c_W: [0,1]
\to S$. We denote the set of $S$-colored graphons by $\W_S$. Given $H \in
\G_S$ and $W \in \W_S$, we let 
\begin{equation}
t_S(H,W) := \int_{[0,1]^{V(H)}} \prod_{e \in E(H)} f_W(x_{e_s}, x_{e_t})
\prod_{v \in V(H)} {\bf 1}_{c_W(x_v) = c_H(v)} \prod_{v \in V(H)}dx_v. 
\end{equation}

\noindent In other words, the integration is carried out only over the
sub-rectangle given by:
\[
x_v \in c_W^{-1}(c_H(v)), \qquad \forall v \in V(H).
\]

The space of $S$-colored graphs $\G_S$ naturally embeds into $\W_S$ in
the following way. Let $k := |S|$, and enumerate $S = \{ s_1, \dots, s_k
\}$ in some fixed order. Given $G \in \G_S$ and $j \in \{1,\dots,k\}$,
let $p_0 := 0$, $p_j := |V_{s_j}(G)|/|V(G)|$, and let $I_j$ denote the
interval $I_j := (\sum_{l=0}^{j-1} p_l, \sum_{l=0}^j p_l]$. Now define
the $S$-colored graphon $G \leadsto W_G \in \W_S$, via:
\begin{align*}
&c_{W_G}(I_j) := j, \ c_{W_G}(1) = k, \\
&f_{W_G}(I_j \times I_{j'}) := {\bf 1}_{(j,j') \in E(G)},
\end{align*}
\noindent and $f_{W_G} = 0$ otherwise. The graphon $W_G$ is related to
the original graph $G$ as follows.

\begin{lemma}
For all $S$-colored graphs $H, G \in \G_S$, we have $t_S(H,G) =
t_S(H,W_G)$.
\end{lemma}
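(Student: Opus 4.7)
The plan is to unwind both sides of the claimed equality directly and match them up, in parallel with the classical (uncolored) identity $t(H, f^G) = t(H, G)$. The whole argument is essentially a bookkeeping computation exploiting the piecewise-constant structure of $W_G$.

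First, I would expand the defining integral of $t_S(H, W_G)$. The indicator factor $\mathbf{1}_{c_{W_G}(x_v) = c_H(v)}$ in the integrand forces each coordinate $x_v$ to lie in the interval $I_{j(v)}$, where $j(v)$ denotes the index of the color $c_H(v)$ in the fixed enumeration $S = \{s_1, \dots, s_k\}$; this uses only the definition $c_{W_G}(I_j) = j$. Hence the integral collapses to one over the product $\prod_{v \in V(H)} I_{j(v)}$ of the remaining edge factors $\prod_{e \in E(H)} f_{W_G}(x_{e_s}, x_{e_t})$.

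Next, I would subdivide each $I_j$ further into $|V_{s_j}(G)|$ subintervals $J_w$ of equal length $1/|V(G)|$, one per vertex $w \in V_{s_j}(G)$, as implicit in the construction of $W_G$ from $G$. The restricted domain of integration then decomposes as a disjoint union, indexed by color-preserving maps $\phi : V(H) \to V(G)$ (those satisfying $c_G \circ \phi = c_H$), of the boxes $\prod_{v \in V(H)} J_{\phi(v)}$. On each such box, the function $f_{W_G}(x_{e_s}, x_{e_t})$ is constant, equal to $\mathbf{1}_{(\phi(e_s), \phi(e_t)) \in E(G)}$, so the product $\prod_{e \in E(H)} f_{W_G}(x_{e_s}, x_{e_t})$ equals $1$ exactly when $\phi$ is edge-preserving and $0$ otherwise. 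Each box has Lebesgue measure $|V(G)|^{-|V(H)|}$; summing then yields
\[
t_S(H, W_G) = |V(G)|^{-|V(H)|} \cdot |\Hom_S(H, G)| = t_S(H, G),
\]
which is the desired identity.

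I do not anticipate any substantive obstacle: the argument is simply the colored analogue of the standard uncolored computation, the only new ingredient being that the color-matching indicators in the definition of $t_S$ on graphons restrict the effective sum over maps $\phi$ to precisely those counted by $|\Hom_S(H, G)|$. The care required is purely notational, in tracking the decomposition of each $I_j$ into its per-vertex subintervals $J_w$.
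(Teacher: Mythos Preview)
Your proposal is correct and is exactly the natural way to verify this identity. The paper does not actually supply a proof of this lemma; it is stated without argument, presumably because it is the routine colored analogue of the classical identity $t(H,G) = t(H,f^G)$. Your decomposition into per-vertex subintervals $J_w$ and identification of the resulting sum with $|\Hom_S(H,G)|$ is precisely what is needed, and there is nothing further to compare against.

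One small caveat: the paper's display defining $f_{W_G}$ reads $f_{W_G}(I_j \times I_{j'}) := {\bf 1}_{(j,j') \in E(G)}$, which taken literally makes $(j,j')$ a pair of color indices rather than vertices. You have (correctly) interpreted the intended meaning, namely that each $I_j$ is further subdivided into $|V_{s_j}(G)|$ equal subintervals indexed by the vertices of that color, with $f_{W_G}$ then recording adjacency on the resulting $|V(G)| \times |V(G)|$ grid. It would be worth making this reading explicit when you write up the argument.
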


Recall that in the uncolored case, homomorphism densities are used to
construct a topology on the space of graphons (see Equation
\eqref{Ehom-density} and the subsequent paragraph). In a similar way, we
use colored homomorphisms to construct a topology on the space of colored
graphons.

\begin{definition}\label{Dleft}
A sequence of $S$-colored graphons $W_n \in \W_S$ is said to
\textit{left-converge} (to a graphon $W \in \W_S$) if the corresponding
sequence of colored homomorphism densities $t_S(H, W_n)$ converges (to
$t_S(H,W)$) for every fixed $S$-colored graph $H$.
\end{definition}

Note that when the nodes of a sequence of graphs all have the same color,
the above notion of left-convergence reduces to the usual uncolored
notion of left-convergence.

\subsection{Cut metric}

Recall that in the uncolored dense limit theory, the topology induced by
homomorphism densities can be metrized using the cut-norm (see Equation
\eqref{Ecutnorm}). We now extend the definition of the cut-norm to
$\W_S$:
\begin{equation}\label{ECutColor}
\boxnorm{W - W'}^S := \boxnorm{W - W'} + \sum_{s \in S} \mu_L(c_W^{-1}(s)
\Delta c_{W'}^{-1}(s)),
\end{equation}

\noindent where $\mu_L$ denotes the usual Lebesgue measure. Notice
$\boxnorm{\cdot}^S$ is not an actual norm when $|S|>1$; however, we
retain the present notation to maintain consistency with the uncolored
case $|S|=1$. Using this definition, we can naturally extend the usual
Counting Lemma to $\W_S$.

\begin{lemma}[Counting Lemma]\label{LcountingColor}
Let $H \in \G_S$ and $W, W' \in \W_S$. Then
\[
|t_S(H,W) - t_S(H,W')| \leq |E(H)| \cdot \boxnorm{W-W'} + \sum_{s \in S}
\mu_L(c_W^{-1}(s) \Delta c_{W'}^{-1}(s)).
\]
In particular, $|t_S(H,W) - t_S(H,W')| \leq |E(H)| \cdot
\boxnorm{W-W'}^S$.
\end{lemma}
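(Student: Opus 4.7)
The approach is a telescoping argument that adapts the proof of the standard (uncolored) Counting Lemma, isolating the two sources of discrepancy between $W$ and $W'$: the edge weights $f_W - f_{W'}$ and the color maps $c_W, c_{W'}$. I would introduce the intermediate $S$-colored graphon $W'' := (f_{W'}, c_W)$, which shares edge weights with $W'$ but carries the coloring of $W$, and invoke the triangle inequality
\[
|t_S(H, W) - t_S(H, W')| \le |t_S(H, W) - t_S(H, W'')| + |t_S(H, W'') - t_S(H, W')|,
\]
so that the two summands correspond to the two contributions in the stated bound.

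For the edge term, both integrals defining $t_S(H, W)$ and $t_S(H, W'')$ range over the common product region $R := \prod_{v \in V(H)} c_W^{-1}(c_H(v)) \subset [0,1]^{V(H)}$, and their integrands differ only in the edge factors. Enumerate $E(H) = \{e_1, \dots, e_m\}$ and telescope: write $\prod_e f_W(e) - \prod_e f_{W'}(e)$ as a sum over $i$ of products in which only $e_i$ carries the increment $(f_W - f_{W'})$ while the remaining edges carry weights in $[0,1]$. Fix $e_i$ incident to $u, w$, and integrate out the remaining $|V(H)| - 2$ coordinates; this reduces the $i$-th summand to $\int_{A \times B} (f_W - f_{W'})(x, y)\, G_i(x, y)\, dx\, dy$, where $A := c_W^{-1}(c_H(u))$, $B := c_W^{-1}(c_H(w))$, and $G_i$ is a residual factor with values in $[0, 1]$. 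Applying the layer-cake representation $G_i = \int_0^1 \mathbf{1}_{G_i > t}\, dt$ reduces each summand to an integral of $(f_W - f_{W'})$ over a measurable subset of $A \times B$, which is controlled by $\boxnorm{f_W - f_{W'}}$ via the cut-norm definition. Summing over the $m = |E(H)|$ edges yields the claimed edge contribution.

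For the color term, both integrals now carry the same edge weight $f_{W'}$ but range over the product regions $R$ and $R' := \prod_v c_{W'}^{-1}(c_H(v))$. Since the remaining integrand lies in $[0, 1]$, we have $|t_S(H, W'') - t_S(H, W')| \le \mu_L(R \triangle R')$. Telescoping the product indicator $\mathbf{1}_R - \mathbf{1}_{R'}$ one vertex at a time (using $|\prod a_v - \prod b_v| \le \sum_v |a_v - b_v|$ for factors in $[0, 1]$) gives $\mu_L(R \triangle R') \le \sum_{v \in V(H)} \mu_L(c_W^{-1}(c_H(v)) \triangle c_{W'}^{-1}(c_H(v)))$, and regrouping by colors produces the color-indexed sum in the stated bound.

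The main obstacle is the cut-norm estimate used in the edge telescoping, where one must bound an integral of $(f_W - f_{W'})$ against a bounded marginal factor $G_i$ on a product color rectangle $A \times B$ by $\boxnorm{f_W - f_{W'}}$ itself, with no constants lost. The layer-cake decomposition reduces this to bounding $|\int_{E}(f_W - f_{W'})|$ over measurable subsets $E \subseteq A \times B$, which is precisely what the supremum over measurable rectangles in the definition of $\boxnorm{\cdot}$ captures; a standard approximation of $E$ by disjoint rectangles handles the remaining technicality. Once this step is in place, combining the edge and color bounds yields the lemma.
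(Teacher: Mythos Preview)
Your overall strategy---introducing the intermediate graphon $W'' = (f_{W'}, c_W)$ and splitting the difference into an edge contribution and a color contribution via telescoping---is exactly what the paper does. The gap is in how you bound the edge term.

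After integrating out the $|V(H)|-2$ coordinates other than the endpoints $u,w$ of $e_i$, the residual factor $G_i(x,y)$ need \emph{not} be of rank one. For instance, if $H$ is a triangle on $u,v,w$ and $e_i=\{u,w\}$, then $G_i(x_u,x_w)=\int f_W(x_u,x_v)f_W(x_v,x_w)\,dx_v$, which does not factor as $h_1(x_u)h_2(x_w)$. Consequently the level sets $\{G_i>t\}$ are not product sets, and the cut-norm does \emph{not} control $\bigl|\int_{E}(f_W-f_{W'})\bigr|$ for a general measurable $E\subset[0,1]^2$: one can have $\boxnorm{U}$ arbitrarily small while $\bigl|\int_{\{U>0\}}U\bigr|$ stays bounded away from zero. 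Approximating $E$ by disjoint rectangles does not rescue the argument either, since summing the individual cut-norm bounds introduces a factor equal to the number of rectangles. The paper avoids this by \emph{not} integrating out first: for each fixed value of the remaining coordinates, the product $\prod_{e\neq e_i}W_e$ splits as $F(x_u)G(x_w)$ with $0\le F,G\le 1$ (edges incident to $u$ versus the rest), so one invokes the rank-one estimate $\bigl|\int (f_W-f_{W'})(x,y)\,F(x)G(y)\,dx\,dy\bigr|\le\boxnorm{f_W-f_{W'}}$ and only then integrates over the other coordinates.

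A smaller point: your color-term telescoping correctly yields $\sum_{v\in V(H)}\mu_L\bigl(c_W^{-1}(c_H(v))\,\Delta\,c_{W'}^{-1}(c_H(v))\bigr)$, but ``regrouping by colors'' gives $\sum_{s\in S} n_s(H)\,\mu_L(c_W^{-1}(s)\,\Delta\,c_{W'}^{-1}(s))$ with $n_s(H)=|c_H^{-1}(s)|$, which exceeds the stated $\sum_{s\in S}\mu_L(\cdots)$ whenever $H$ repeats a color. The paper's own write-up glosses over this step in the same way, so the discrepancy lies with the lemma's statement as much as with your argument.
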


Measure preserving maps $\sigma \in S_{[0,1]}$ naturally act on $\W_S$:
if $(W, c_W) \in \W_S$, then we let 
\begin{equation}\label{EsigmaColor}
W^\sigma(x,y) := W(\sigma(x), \sigma(y)) \qquad \textrm{and} \qquad
c_W^\sigma(x) := c_W(\sigma(x)).
\end{equation}

\noindent As in the uncolored case, we define the distance
$\delta_{\Box}^S$ for $W_1,W_2, \in \W_S$ by the formula
\[
\delta_{\Box}^S(W_1,W_2) := \inf_{\sigma \in S_{[0,1]}} \boxnorm{W_1 -
W_2^\sigma}^S,
\]

\noindent As usual, we will say $W_1 \sim W_2$ if
$\delta_{\Box}^S(W_1,W_2) = 0$.  Observe that $(\W_S/\sim,
\delta_\Box^S)$ is a metric space. It is in fact compact, as in the
classical case where the vertices are not colored.

\begin{theorem}\label{Tcompact}
The space $(\W_S/\sim, \delta_\Box^S)$ is compact.
\end{theorem}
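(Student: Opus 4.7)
The plan is to establish sequential compactness of $(\W_S/\sim, \delta_\Box^S)$ by reducing it, color block by color block, to the classical Lov\'asz--Szegedy compactness theorem for uncolored graphons. Given a sequence $(W_n) = ((f_n, c_n))$ in $\W_S$, I want to extract a subsequence and produce measure-preserving bijections $\sigma_n \in S_{[0,1]}$ along with a limit $W = (f, c) \in \W_S$ satisfying $\boxnorm{W_n^{\sigma_n} - W}^S \to 0$.

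The first step is to normalize the color data. The color-mass vector $(p_s^n)_{s \in S} := (\mu_L(c_n^{-1}(s)))_{s \in S}$ lies in the compact probability simplex over $S = \{s_1, \ldots, s_k\}$, so after passing to a subsequence we may assume $p_s^n \to p_s$ for each $s \in S$. Setting $I_j^n := (\sum_{l<j} p_l^n, \sum_{l \leq j} p_l^n]$ and $I_j$ analogously with $p_l$, we have $\mu_L(I_j^n \Delta I_j) \to 0$. By composing each $W_n$ with a measure-preserving bijection that sends $I_j^n$ onto $c_n^{-1}(s_j)$ for every $j$ (a valid operation inside the equivalence class), I may replace $W_n$ by a representative satisfying $c_n^{-1}(s_j) = I_j^n$. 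The target coloring is $c^{-1}(s_j) := I_j$; with this choice the color part $\sum_s \mu_L(c_n^{-1}(s) \Delta c^{-1}(s))$ automatically tends to zero.

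Next, I handle the graphon part blockwise. For each ordered pair $(j,l)$, the restriction of $f_n$ to $I_j^n \times I_l^n$, rescaled to $[0,1]^2$, is either a symmetric kernel (when $j=l$) or a bipartite kernel (when $j \neq l$); in either case, an appropriate version of the Lov\'asz--Szegedy compactness theorem applies (the bipartite version reduces to the symmetric one by a standard doubling). Diagonalizing over the finitely many pairs $(j,l)$ and passing to a further subsequence yields a single family of measure-preserving bijections $\sigma_n$ of $[0,1]$, each preserving the partition $\{I_j^n\}_j$, such that every block of $f_n^{\sigma_n}$ converges in cut-norm to a limit block kernel $f_{j,l}$. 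Reassembling the $f_{j,l}$ into a single symmetric graphon $f$ on $[0,1]^2$ (after reindexing from $I_j^n \times I_l^n$ to $I_j \times I_l$ via the vanishing boundary shift) produces the candidate limit $W = (f, c)$, and summing the blockwise cut-norm convergences across the finitely many pairs controls $\boxnorm{f_n^{\sigma_n} - f}$.

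The main technical obstacle is producing a \emph{single} coherent family $\sigma_n$ that witnesses cut-norm convergence of every block simultaneously, while absorbing the small boundary mismatch between $I_j^n$ and $I_j$. The sorting step reduces the problem to the subgroup of measure-preserving bijections preserving a common color partition, so the blockwise compactness arguments can be glued; diagonalization handles the finitely many pairs $(j,l)$ at once; and the boundary mismatch contributes only $O(\sum_j \mu_L(I_j^n \Delta I_j))$ to both the graphon and color parts of $\boxnorm{\cdot}^S$, which vanishes as $p_s^n \to p_s$.
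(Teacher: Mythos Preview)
Your normalization of the color data---sorting the $c_n^{-1}(s)$ into consecutive intervals and passing to a subsequence along which the mass vector converges on the simplex---is exactly what the paper does, and it correctly disposes of the color term in $\boxnorm{\cdot}^S$. The gap is in the graphon half. Applying Lov\'asz--Szegedy compactness separately to each block $I_j^n\times I_l^n$ produces, for each pair $(j,l)$, its \emph{own} measure-preserving bijection of $I_j^n$ (and of $I_l^n$); nothing forces the bijection of $I_j^n$ arising from the $(j,j)$-block to agree with the one arising from the $(j,l)$-block. Diagonalization over the finitely many pairs $(j,l)$ synchronizes \emph{subsequences}, not bijections: after diagonalizing you still carry one bijection of $I_j^n$ per pair involving $j$, generally all different, and there is nothing to glue into a single partition-preserving $\sigma_n$. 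For a concrete obstruction take $k=2$ with equal halves, make both diagonal blocks constant (so every $\tau_n^1,\tau_n^2$ works there), and let the off-diagonal block be $g_n(x)$ with $g_n$ alternating between ${\bf 1}_{[0,1/4]}$ and ${\bf 1}_{(1/4,1/2]}$; the diagonal step imposes no constraint on $\tau_n^1$, yet no choice of $\tau_n^2$ alone can make the off-diagonal block converge in cut-norm.

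The paper avoids this by not splitting into blocks at all. It reopens the weak-regularity proof of uncolored compactness (\cite[Theorem~9.23]{Lovasz:2013}) and simply requires the initial regularity partitions $P_{n,1}$ to refine the color partition $\{c_{W_n}^{-1}(s)\}_{s\in S}$; every later refinement $P_{n,k}$ then also respects the coloring, and the single relabeling that argument produces is color-preserving by construction, so both terms of $\boxnorm{\cdot}^S$ go to zero along the same subsequence. If you want a black-box repair of your route, encode the coloring as additional kernels (e.g.\ adjoin the indicators ${\bf 1}_{c_n(x)=s}{\bf 1}_{c_n(y)=s'}$) and invoke compactness for decorated/multi-graphons once; that delivers a single coherent $\sigma_n$ for the whole structure in one shot.
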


Moreover, the colored cut-distance provides a way to metrize the topology
induced by the colored homomorphism densities. 

\begin{theorem}\label{thm:Top_Equiv}
Let $W_n$ be a sequence of $S$-colored graphons. Then the sequence $W_n$
left-converges (see Definition \ref{Dleft}) if and only if it is Cauchy
in the $\delta_\Box^S$ metric.
\end{theorem}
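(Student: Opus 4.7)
The overall strategy is to combine the Counting Lemma \ref{LcountingColor} with compactness of $(\W_S/\sim, \delta_\Box^S)$ (Theorem \ref{Tcompact}), reducing the theorem to the statement that the family of colored homomorphism densities separates points in the quotient space. This mirrors the uncolored argument of \cite{BCLSV:2008, Lovasz:2013}, with extra bookkeeping for the color partitions.

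The direction Cauchy $\Rightarrow$ left-convergent follows quickly from Lemma \ref{LcountingColor} once one observes that $t_S(H, \cdot)$ is invariant under the $S_{[0,1]}$-action defined in \eqref{EsigmaColor}, by a measure-preserving change of variables in the integrand defining $t_S(H, W)$. Indeed, for any $\sigma \in S_{[0,1]}$,
\[
|t_S(H, W_n) - t_S(H, W_m)| = |t_S(H, W_n) - t_S(H, W_m^\sigma)| \leq |E(H)| \cdot \|W_n - W_m^\sigma\|_\Box^S,
\]
and taking the infimum over $\sigma$ yields $|t_S(H, W_n) - t_S(H, W_m)| \leq |E(H)| \cdot \delta_\Box^S(W_n, W_m)$. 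For the converse, suppose $W_n$ left-converges. If $W_n$ were not $\delta_\Box^S$-Cauchy I would extract two subsequences whose mutual distance stays above some $\epsilon > 0$; by compactness (Theorem \ref{Tcompact}), thinning to convergent sub-subsequences produces limits $W, W'$ satisfying $\delta_\Box^S(W, W') \geq \epsilon$. Applying the Counting Lemma along each subsequence gives $t_S(H, W) = \lim_n t_S(H, W_n) = t_S(H, W')$ for every $H \in \G_S$, reducing everything to the separation claim: if $t_S(H, W) = t_S(H, W')$ for all $H \in \G_S$, then $\delta_\Box^S(W, W') = 0$.

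The hard part is this separation claim, which I would prove by adapting the uncolored sampling argument of \cite[Chapter 11]{Lovasz:2013}. Define $\mathbb{G}_S(k, W)$ by drawing $X_1, \ldots, X_k$ i.i.d.\ uniform on $[0,1]$, assigning color $c_W(X_i)$ to vertex $i$, and placing each edge $(i,j)$ independently with probability $f_W(X_i, X_j)$. An inclusion--exclusion over edges then expresses $\bp(\mathbb{G}_S(k, W) = H)$ as a polynomial in the colored densities $t_S(H', W)$ over $S$-colored graphs $H'$ on at most $k$ vertices, so coincidence of all colored densities for $W$ and $W'$ implies that the sampling laws on $k$-vertex $S$-colored graphs agree. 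The main obstacle, and the technical heart of the argument, is the colored concentration result $\mathbb{G}_S(k, W) \to W$ almost surely in $\delta_\Box^S$ as $k \to \infty$: by \eqref{ECutColor} this requires simultaneous control of the cut-norm of the edge-weights, handled by the uncolored concentration \cite[Lemma 10.16]{Lovasz:2013}, and of each Lebesgue measure $\mu_L(c_W^{-1}(s) \Delta c^{-1}_{\mathbb{G}_S(k,W)}(s))$, which reduces to a Hoeffding-type bound on the statistics $\mathbf{1}_{c_W(X_i) = s}$ combined with a measure-preserving rearrangement that aligns the sampled color classes with those of $W$. A parallel alternative would be to develop a weak regularity lemma on $\W_S$ and combine it with Lemma \ref{LcountingColor}, but the sampling route stays closest to the machinery already cited in Section \ref{Sreview}.
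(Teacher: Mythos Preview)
Your proposal is correct and follows essentially the same route as the paper: Counting Lemma plus $S_{[0,1]}$-invariance for one direction, compactness (Theorem \ref{Tcompact}) reducing the other to the separation claim, and then the colored sampling argument with inclusion--exclusion and a concentration estimate for $\mathbb{G}_S(k,W)$ in $\delta_\Box^S$. The only cosmetic differences are that the paper packages the compactness step as ``the map $W \mapsto (t_S(H,W))_H$ is a homeomorphism onto its image'' and establishes the concentration in expectation (routing through the intermediate weighted sample $\mathbb{H}_S(n,W)$ and a partition approximation $W_{\mathcal{P}}$) rather than almost surely.
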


Finally, the colored finite graphs are dense in the completion $\W_S /
\sim$.

\begin{theorem}\label{thm:density}
Colored graphs are dense in $\W_S / \sim$.
\end{theorem}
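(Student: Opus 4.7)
The plan is to reduce the colored density statement to the known uncolored density result (which says that finite graphs $f^G$ are dense in $\W_{[0,1]}/\sim$ under $\delta_\Box$) by simultaneously approximating the edge function $f_W$ and the coloring $c_W$ of a given $W \in \W_S$, and controlling both contributions to $\boxnorm{\cdot}^S$ by the same measure-preserving rearrangement.

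Fix $W = (f_W, c_W) \in \W_S$ and $\varepsilon > 0$. I would first put the coloring into canonical form. Set $p_j := \mu_L(c_W^{-1}(s_j))$ for $s_j \in S = \{s_1,\ldots,s_k\}$, and let $I_j := (\sum_{l<j} p_l, \sum_{l\le j} p_l]$ be the intervals used to embed $\G_S \hookrightarrow \W_S$. Since $([0,1],\mu_L)$ is a standard atomless probability space, there is a measure-preserving bijection $\sigma \in S_{[0,1]}$ with $\sigma^{-1}(c_W^{-1}(s_j)) = I_j$ up to null sets. Replacing $W$ by $W^\sigma$ (which is $\sim$-equivalent by \eqref{EsigmaColor}), I may assume $c_W$ is already the piecewise-constant function with $c_W^{-1}(s_j) = I_j$. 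This kills the color contribution to $\boxnorm{W - W'}^S$ against any $W'$ whose color classes are also exactly the $I_j$.

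Next I would approximate $f_W$ by step functions adapted to the color partition. Using a dyadic refinement of the partition $\{I_j\}_j$, together with the fact that step functions are dense in $L^1([0,1]^2)$ and $\boxnorm{\cdot} \le \|\cdot\|_1$, I can choose $n$ large and a step function $W_n : [0,1]^2 \to [0,1]$ that is constant on each cell of a partition $\mathcal{P}_n$ of $[0,1]$ refining $\{I_j\}_j$, with $\boxnorm{f_W - W_n} < \varepsilon/2$. The step function $W_n$ is essentially a colored weighted graph $H_n$ on the cells of $\mathcal{P}_n$, where each cell inherits the color of the unique $I_j$ containing it. To convert weights to a genuine colored simple graph, I would apply the standard ``randomize-and-concentrate'' construction: blow up each cell into many copies, independently assign each potential edge with probability equal to the weight in $W_n$, and inherit colors. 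By the classical concentration estimate for cut-norm (as in \cite[Lemma 10.11/10.16]{Lovasz:2013}) one obtains, with positive probability, a finite $S$-colored graph $G$ with $\boxnorm{W_n - f^G} < \varepsilon/2$, and whose color class proportions match the $p_j$ exactly by construction, so that $c_{W_G}^{-1}(s_j) = I_j$.

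Combining the two estimates via the triangle inequality, and noting that the color contribution to $\boxnorm{W - W_G}^S$ vanishes since both colorings equal $\sum_j s_j \mathbf{1}_{I_j}$, I get $\delta_\Box^S(W, W_G) \le \boxnorm{W - W_G}^S < \varepsilon$, proving density. The main obstacle I expect is Step 3: one must simultaneously (i) realize the exact color proportions $p_j$ by a finite graph (which forces rational approximations to $p_j$ and a blow-up sized to match), and (ii) carry out the random edge rounding without disturbing the color alignment. These two requirements are compatible because the rounding is done within each color-class block, so the color function of $W_G$ remains the piecewise-constant $c_W$ (up to a small rounding in the $p_j$'s, which can be absorbed into $\varepsilon/2$ via an extra invocation of the Counting Lemma \ref{LcountingColor} and the explicit formula \eqref{ECutColor}).
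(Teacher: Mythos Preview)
Your proposal is correct and takes essentially the same approach as the paper: reduce to the case where the color classes are consecutive intervals $I_j$ via a measure-preserving rearrangement, then approximate $f_W$ in cut-norm by a finite graph whose vertex intervals are aligned with the $I_j$. The paper's version is a bit shorter --- it invokes uncolored density in $(\W_{[0,1]},\boxnorm{\cdot})$ as a black box to get $G_n$ with $\boxnorm{f^{G_n}-f_W}\to 0$ and $|V(G_n)|\to\infty$, then colors vertex $i$ of $G_n$ by whichever $I_j$ contains $((i-1)/|V(G_n)|,\,i/|V(G_n)|]$; the color error is then $O(|S|/|V(G_n)|)$, so your concern about matching the $p_j$ exactly is not needed.
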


Using an inclusion-exclusion argument, it is also not hard to show that
convergence in this topology is equivalent to convergence in a sampling
topology for $S$-colored graphs. Therefore, this topology is the
canonical topology for convergence of dense $S$-colored graphs.

\section{Clustering by continuous node-level
statistics}\label{section:nodelvl}

One general class of graph partitioning algorithms proceed as follows:
compute a statistic defined on the set of nodes and then partition the
nodes based on the value of that statistic.  For example, this includes
clustering based on the degree, local clustering coefficient, and
spectral clustering.  In this section, we call such statistics
\textit{node-level statistics}.  We introduce the notion of a
\textit{continuous node-level statistic}, which is broad enough to apply
to commonly used node-level statistics (such as the degree or local
clustering coefficient). We then show a general consistency result for
graph partitioning algorithms based on continuous node-level statistics.
We also verify continuity of node-level statistics defined based on local
graph statistics, thereby showing structural consistency of graph
partitioning based on such statistics.  The same general theorem is
applied in Section \ref{section:spectral} to show structural consistency
of spectral clustering, though such a result requires additional
machinery.

We begin by defining the notion of a node-level statistic and a
continuous node-level statistic.

\begin{definition}
Let $\G$ denote the set of labelled finite simple graphs. A
\textit{node-level statistic in $\R^m$} is a collection of maps $\{ f_G :
V(G) \to \R^m, G \in \G \}$.
\end{definition}

Notice that node-level statistics can be summarized also as a map $f :
\G \to L^1([0,1], \R^m)$, with the restriction that if we label
$V(G) := \{ 1, \dots, n \}$ then
\[
f(G)(y) = f_G(\lceil ny \rceil), \qquad G \in \G, y \in [0,1].
\]

Well-known examples of node-level statistics include the degree of a
node, or the local clustering coefficient, i.e., the proportion of pairs
of neighboring nodes that are adjacent.

\begin{definition}\label{DcontNLS}
We say that a node-level statistic in $\R^m$ is \textit{continuous} if
whenever $G_n \to W_0$ in the cut-norm, the family of functions $f(G_n)
\in L^1([0,1], \R^m)$ is convergent in $L^1$.
\end{definition}

Our notion of continuity above was defined so that natural node-level
statistics such as the degree would be continuous. For example, the
degree distribution does not necessarily converge pointwise as a sequence
of graphs converges in the cut-norm, but as we will show later, it
converges in $L^1$. Another candidate notion of convergence for functions
$f: [0,1] \to \R^m$ could be convergence with respect to a generalized
cut-norm
\[
\boxnorm{f} := \sup_{B \subset [0,1]} \left\| \int_B f(y) dy \right\|.
\]

\noindent However, as we now show, convergence in the generalized
cut-norm is equivalent to convergence in the $L^1$ norm, since the
function is defined on the interval as opposed to a higher dimensional
hypercube. To explain the equivalence, for every tuple ${\bf c} = (c_1,
\dots, c_m) \in \{0,1\}^m$, let $B_{\bf c} := f^{-1} ((-1)^{c_1} \R_{\ge
0} \times \cdots \times (-1)^{c_m} \R_{\ge 0})$. Then, 
\begin{align*}
\boxnorm{f} \le \|f\|_1 = \sum_{{\bf c} \in \{0,1\}^m} \left\|
\int_{B_{\bf c}} f(y) dy \right\| \le 2^m \boxnorm{f},
\end{align*}

\noindent so the cut-norm and the $L^1$ norm are equivalent.

Another advantage of using the notion of $L^1$ is that it generalizes
naturally to node-level statistics taking values in arbitrary metric
spaces $(X,d_X)$. In the following definition, $L^1([0,1],X)$ is the set
of measurable functions
\[
L^1([0,1],X) := \{ g : ([0,1], \mu_L) \to (X, \mathscr{B}_X) : 
\int_0^1 d_X(x_0, g(y))\ dy < \infty \}
\]
for any choice of point $x_0 \in X$, and equipped with the metric
\[
d_1(g,g') := \int_0^1 d_X(g(y), g'(y))\ dy.
\]

\noindent Here, $\mu_L$ stands for Lebesgue measure, and $\mathscr{B}_X$
for the Borel $\sigma$-algebra on $X$. As usual, we identify functions in
$L^1([0,1],X)$ that are equal almost everywhere on $[0,1]$.

\begin{definition}\label{Dnode-stat-metric}
Given a metric space $(X, d_X)$, a \textit{node-level statistic in $X$}
is $f : \G \to L^1([0,1],X)$, such that $f(G)(y) = f_G(\lceil ny \rceil)$
for all $G \in \G, y \in [0,1]$.
We say that an $X$-valued node-level statistic $f : \G \to L^1([0,1],X)$
is \textit{continuous} if whenever $G_n \to W_0$ in the cut-norm,
\[
\lim_{m, n \to \infty} d_1(f(G_n), f(G_m)) = 0,
\]

\noindent i.e., the sequence of functions $f(G_n) \in L^1([0,1],X)$ is
Cauchy.
\end{definition}

Note that this definition reduces to Definition \ref{DcontNLS} when $X =
\R^m$ (equipped with the usual Euclidean distance). In fact this is true
for any complete metric space $(X,d_X)$: continuous $X$-valued node-level
statistics $f : \G \to L^1([0,1],X)$ continuously extend to functions $f:
\W_{[0,1]} \to L^1([0,1],X)$. See Corollary \ref{Cmetric-ext} in Appendix
\ref{AppB} for details.

In the remainder of the paper, we will only deal with continuous
node-level statistics. Hence, from now on we take such a statistic to
denote a continuous function $f : \W_{[0,1]} \to L^1([0,1],X)$.
With a slight abuse in notation we shall also write $f(W,y)$ instead of
$f(W)(y)$.

Our next theorem provides a general consistency result when clustering is
performed using a continuous node-level statistic $f : \W_{[0,1]} \to
L^1([0,1], X)$. We use $f$ to color the vertices of $W \in \W_{[0,1]}$.
More precisely, suppose there exists a collection of disjoint open sets
$(A_j)_{j=1}^N \subset X$ such that
\[
f(W,y) \in \bigcup_{j=1}^N A_j \quad \textrm{for a.e. } y \in [0,1].
\] 

\noindent Then it is natural to define a coloring $c_W: [0,1] \to
\{1,\dots,N\}$ by letting $c_W(y)$ be the unique $j$ such that $f(W,y)
\in A_j$. Note that $c_W$ is well-defined for almost every $y \in [0,1]$.
Letting $S = \{1,\dots,N\}$, this operation induces a map $F: \W_{[0,1]}
\to \W_S$ defined by
\[
F(W) := (W, c_W).
\]

Structural consistency of the above coloring is equivalent to continuity
of the map $F$. Our next result provides a useful sufficient condition
for the map $F$ to be continuous. In what follows, we identify functions
$f: \W_{[0,1]} \to L^1([0,1],X)$ with functions $f: [0,1]^2 \times [0,1]
\to [0,1] \times X$.

\begin{utheorem}\label{TclusterGen}
Let $(X,d_X)$ be a metric space, and $f: \W_{[0,1]} \to L^1([0,1],X)$ a
continuous node-level statistic in $X$.
Given a collection of disjoint open sets $A_1, \dots A_N \subset X$,
define
\begin{equation}\label{Enomass}
\mathcal{D} := \{ W \in \W_{[0,1]} : f(W, y) \in \bigcup_{j=1}^N A_j
\quad \textrm{for a.e. } y \in [0,1] \}.
\end{equation}

\noindent Define $S = \{1, \dots, N\}$ and for $W \in \mathcal{D}$, let
$c_W: [0,1] \to S$ be the coloring defined for almost every $y$ by
letting $c_W(y) = j$, where $j$ is the unique color in $S$ such that
$f(W,y) \in A_j$. 
Then the map $F: (\mathcal{D}, \boxnorm{\cdot}) \to (\W_S ,
\boxnorm{\cdot}^S)$ given by $F(W) = (W, c_W)$ is continuous.
\end{utheorem}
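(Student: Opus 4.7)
My plan is to reduce continuity of $F$ at $W_0 \in \mathcal{D}$ to controlling the measure of the disagreement set, and then to use $L^1$ convergence of the node-level statistic together with an ``inner approximation'' of each open class $A_j$ to bound that measure. Fixing $W_n, W_0 \in \mathcal{D}$ with $\|W_n - W_0\|_\Box \to 0$, the colored cut-norm expands as
\[
\|F(W_n) - F(W_0)\|_\Box^S = \|W_n - W_0\|_\Box + \sum_{s \in S} \mu_L\bigl(c_{W_n}^{-1}(s) \Delta c_{W_0}^{-1}(s)\bigr),
\]
so only the coloring term requires attention. Each $y$ in the disagreement set $D_n := \{y \in [0,1] : c_{W_n}(y) \neq c_{W_0}(y)\}$ lies in the symmetric difference for exactly two values of $s$ (namely $c_{W_n}(y)$ and $c_{W_0}(y)$), so the sum equals $2\mu_L(D_n)$, and it suffices to prove $\mu_L(D_n) \to 0$. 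Since $f : \W_{[0,1]} \to L^1([0,1], X)$ is continuous (via the extension from $\G$ provided by Corollary \ref{Cmetric-ext}), we get $f(W_n) \to f(W_0)$ in $L^1$, and Markov's inequality applied to $y \mapsto d_X(f(W_n, y), f(W_0, y))$ yields convergence in measure: $\mu_L(\{y : d_X(f(W_n, y), f(W_0, y)) \geq \delta\}) \to 0$ for every $\delta > 0$.

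Next I would exploit that the $A_j$ are open and pairwise disjoint by introducing the $\delta$-interior $A_j^\delta := \{x \in A_j : d_X(x, X \setminus A_j) > \delta\}$, which satisfies $A_j^\delta \uparrow A_j$ as $\delta \downarrow 0$. Since $W_0 \in \mathcal{D}$, $f(W_0, y) \in A_1 \cup \cdots \cup A_N$ for almost every $y$, so continuity of measure gives $\mu_L(\{y : f(W_0, y) \notin A_1^\delta \cup \cdots \cup A_N^\delta\}) \to 0$ as $\delta \downarrow 0$. Given $\varepsilon > 0$, I would fix $\delta > 0$ so that this measure is below $\varepsilon$. The key observation is that whenever $f(W_0, y) \in A_j^\delta$ and $d_X(f(W_n, y), f(W_0, y)) < \delta$, the open $\delta$-ball around $f(W_0, y)$ lies in $A_j$, forcing $f(W_n, y) \in A_j$ and hence $c_{W_n}(y) = c_{W_0}(y) = j$. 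Therefore
\[
D_n \subseteq \{y : f(W_0, y) \notin A_1^\delta \cup \cdots \cup A_N^\delta\} \cup \{y : d_X(f(W_n, y), f(W_0, y)) \geq \delta\},
\]
whose right-hand side has measure at most $\varepsilon$ plus a quantity tending to zero. Taking $\limsup_n$ and then letting $\varepsilon \downarrow 0$ gives $\mu_L(D_n) \to 0$, as needed.

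I expect the main obstacle to be the conceptual gap between $L^1$ convergence of $f(W_n)$ to $f(W_0)$ in the abstract target $X$ and setwise closeness of the colorings in $[0,1]$: the $A_j$ are only disjoint open sets, so points in different classes may be arbitrarily close in $X$, and mere nearness of $f(W_n, y)$ to $f(W_0, y)$ does not by itself force the colors to agree. The $\delta$-interior $A_j^\delta$ is the device that creates a uniform buffer, and the hypothesis $W_0 \in \mathcal{D}$ is used precisely to ensure that almost every $y$ eventually lies inside some $A_j^\delta$ as $\delta \downarrow 0$.
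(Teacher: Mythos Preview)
Your proof is correct and follows essentially the same approach as the paper: both reduce to showing the measure of the color-disagreement set vanishes by combining $L^1$ convergence of $f(W_n)\to f(W_0)$ with a distance-to-boundary buffer inside each open $A_j$. The paper decomposes each $A_j$ into countable annuli $\{x:1/(k+1)\le d_X(x,A_j^c)<1/k\}$ and invokes dominated convergence, whereas your single $\delta$-interior plus continuity-of-measure argument is a slightly more economical packaging of the same idea.
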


Note, the theorem has a technical assumption about working with a subset
$\mathcal{D}$ rather than the full graphon space $\W_{[0,1]}$. In fact
this assumption is required, as we explain in Example \ref{Ex-deg-mass}.

We now show that under a supplementary invariance assumption, a similar
result holds on the quotient graphon spaces. 

\begin{definition}
Let $(X,d_X)$ be a metric space. We say that a continuous node-level
statistic $f : \W_{[0,1]} \to L^1([0,1],X)$ is $S_{[0,1]}$-invariant if 
\begin{equation}
f(W^\sigma, y) := f(W, \sigma(y)), \qquad \forall G \in \G, \ \sigma \in
S_{[0,1]}, \ a.e.\ y \in [0,1].
\end{equation}

\noindent In particular, for a graph $G$ with vertex set $\{ 1, \dots, n
\}$, and a permutation $\sigma \in S_n$, we have $f(G^\sigma, y) =
f(G,\sigma(y))$, where $\sigma$ acts on $[0,1]$ under the usual embedding
$S_n \hookrightarrow S_{[0,1]}$.\footnote{Specifically, each $\sigma \in
S_n$ acts on $[0,1]$ by fixing $1$, and sending $y \in ((i-1)/n, i/n]$ to
$y + (\sigma(i) - i)/n$.}
\end{definition}

\begin{cor}\label{CclusterGen}
In the setting of Theorem \ref{TclusterGen}, if in addition $\mathcal{D}$
is $S_{[0,1]}$-stable and $f$ is $S_{[0,1]}$-invariant so that
$f(W^\sigma,y) = f(W,\sigma(y))$ for all $\sigma \in S_{[0,1]}$ and
almost every $y \in [0,1]$, then the map $F: (\mathcal{D} / \sim,
\delta_\Box) \to (\W_S / \sim, \delta_\Box^S)$ given by $F(W) = (W, c_W)$
is continuous.
\end{cor}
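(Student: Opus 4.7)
The plan is to reduce continuity on the quotient to the unquotiented continuity already established in Theorem \ref{TclusterGen}, by lifting almost-minimizers in the definition of $\delta_\Box$ and then transporting back using equivariance of $F$.

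First, I would extract two structural observations. (a) \emph{Equivariance of $F$:} for $W \in \mathcal{D}$ and $\sigma \in S_{[0,1]}$, the $S_{[0,1]}$-stability of $\mathcal{D}$ gives $W^\sigma \in \mathcal{D}$, and the hypothesis $f(W^\sigma,y) = f(W,\sigma(y))$ a.e.~forces $c_{W^\sigma} = c_W \circ \sigma$ a.e. Comparing with \eqref{EsigmaColor}, this says $F(W^\sigma) = F(W)^\sigma$ in $\W_S$. (b) \emph{$S_{[0,1]}$-invariance of $\|\cdot\|_\Box^S$:} for any $U,U' \in \W_S$ and $\sigma \in S_{[0,1]}$, one has $\|U^\sigma - U'^\sigma\|_\Box^S = \|U - U'\|_\Box^S$, since the cut-norm piece is well-known to be $\sigma$-invariant and the coloring piece is invariant because $c_{U^\sigma}^{-1}(s) = \sigma^{-1}(c_U^{-1}(s))$ and $\sigma$ preserves Lebesgue measure.

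Now for the main argument, fix $W_0 \in \mathcal{D}$ and suppose $W_n \in \mathcal{D}$ with $\delta_\Box(W_n, W_0) \to 0$. For each $n$, I would choose $\sigma_n \in S_{[0,1]}$ such that
\[
\|W_n - W_0^{\sigma_n}\|_\Box \le \delta_\Box(W_n, W_0) + \tfrac{1}{n}.
\]
Since $\sigma_n^{-1}$ is measure-preserving, $\|W_n^{\sigma_n^{-1}} - W_0\|_\Box = \|W_n - W_0^{\sigma_n}\|_\Box \to 0$, and $S_{[0,1]}$-stability keeps $W_n^{\sigma_n^{-1}} \in \mathcal{D}$. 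Theorem \ref{TclusterGen} then yields $\|F(W_n^{\sigma_n^{-1}}) - F(W_0)\|_\Box^S \to 0$. By equivariance (observation (a)) this equals $\|F(W_n)^{\sigma_n^{-1}} - F(W_0)\|_\Box^S$, which by observation (b) equals $\|F(W_n) - F(W_0)^{\sigma_n}\|_\Box^S$. Therefore
\[
\delta_\Box^S(F(W_n), F(W_0)) \;\le\; \|F(W_n) - F(W_0)^{\sigma_n}\|_\Box^S \;\longrightarrow\; 0,
\]
which is continuity of $F$ at $W_0$ on the quotient. Well-definedness on $\mathcal{D}/\!\sim$ is the special case $W_n \equiv W$ with $\delta_\Box(W, W') = 0$, which gives $\delta_\Box^S(F(W), F(W')) = 0$.

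The only real subtlety is that Theorem \ref{TclusterGen} supplies continuity of $F$ at a \emph{fixed} argument in cut-norm, whereas on the quotient the candidate limit $F(W_0)^{\sigma_n}$ is a moving target. Equivariance of $F$ combined with the $\sigma$-isometry of $\|\cdot\|_\Box^S$ is exactly what converts the moving-target comparison into the fixed-target comparison $F(W_n^{\sigma_n^{-1}}) \to F(W_0)$, to which Theorem \ref{TclusterGen} applies directly; this is the heart of the proof.
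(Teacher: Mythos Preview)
Your proof is correct and follows essentially the same approach as the paper: lift $\delta_\Box$-convergence to $\|\cdot\|_\Box$-convergence via (near-)minimizing $\sigma_n$, apply Theorem \ref{TclusterGen}, and use the equivariance $c_{W^\sigma}=c_W\circ\sigma$ to descend back to $\delta_\Box^S$. Your version is slightly more explicit than the paper's (the $1/n$ approximate-minimizer device and the separate verification of well-definedness), but the structure is identical.
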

 
We now prove Theorem \ref{TclusterGen}, and using it, Corollary
\ref{CclusterGen}.

\begin{proof}[Proof of Theorem \ref{TclusterGen}]
Suppose $W_n \to W_0$ as $n \to \infty$, with $W_n \in \mathcal{D}$ for
all $n \geq 0$.
For $j=1,\dots,N$ and $n \geq 0$, let 
\begin{align*}
E_{n,j} &:= \{ y \in [0,1] : f(W_n,y) \in A_j\}.
\end{align*}

\noindent For $k \geq 1$ and $j=1,\dots,N$, let 
\[
A_{j,k} := \left\{x \in A_j : \frac{1}{k+1} \leq d_X(x, A_j^c) <
\frac{1}{k}\right\}, 
\]
and let $A_{j,0} := \left\{x \in A_j : d_X(x, A_j^c) \geq 1\right\}$,
where we denote $d_X(x,A) := \inf \{d_X(x,a) : a \in A\}$. Note that $A_j
= \cup_{k=0}^\infty A_{j,k}$ since $A_j$ is open. For $k \geq 0$, define
\[
F_{j,k} := \left\{ y \in [0,1] : f(W_0,y) \in A_{j,k}\right\}. 
\]

\noindent By the definition of $A_{j,k}$ and assumption \ref{Enomass}, we
have  
\[
E_{0,j}^c = \bigcup_{\substack{l=1 \\ l \ne j}}^N
\bigcup_{k=0}^\infty F_{l,k} \cup Z
\] 

\noindent for some set $Z \subset [0,1]$ with $\mu_L(Z) = 0$. Moreover,
the sets $F_{l,k}$ are disjoint. Thus,  
\[
\mu_L(E_{n,j} \setminus E_{0,j}) = \sum_{\substack{l=1 \\ l \ne j}}^N
\sum_{k=0}^\infty \mu_L(E_{n,j} \cap F_{l,k}).
\]

\noindent Now fix $k \geq 0$ and distinct colors $j \neq l \in S$. For
every $n \geq 1$, we have by the definitions of $E_{n,j}$ and $F_{l,k}$
that 
\[
d_X \left(f(W_n,y), f(W_0,y)\right) \geq \frac{1}{k+1} \qquad \forall y
\in E_{n,j} \cap F_{l,k}.
\]

\noindent Therefore, by the continuity of the node-level statistic $f$,
\begin{align*}
\frac{1}{k+1} \cdot \mu_L(E_{n,j} \cap F_{l,k}) &\leq \int_{E_{n,j}
\cap F_{l,k}} d_X \left(f(W_n,y),f(W_0,y)\right) dy\\
&\leq \int_{[0,1]} d_X \left(f(W_n,y),f(W_0,y)\right)\ dy \to 0
\end{align*}

\noindent as $n \to \infty$. It follows that $\mu_L(E_{n,j} \cap F_{l,k})
\to 0$ as $n \to \infty$ for each fixed $k \geq 0$ and $l \ne j$.
Moreover,
\[
\mu_L(E_{n,j} \cap F_{l,k}) \leq \mu_L(F_{l,k}), \qquad \sum_{l \ne j}
\sum_{k=0}^\infty \mu_L(F_{l,k}) = \mu_L(E_{0,j}^c) < \infty.
\]

\noindent Therefore, by the dominated convergence theorem, 
\[
\mu_L(E_{n,j} \setminus E_{0,j}) = \sum_{l \ne j} \sum_{k=1}^\infty
\mu_L(E_{n,j} \cap F_{l,k}) \to 0
\]

\noindent as $n \to \infty$. By a similar argument, we also obtain
$\mu_L(E_{0,j} \setminus E_{n,j}) \to 0$ as $n \to \infty$. Therefore,
$\mu_L(E_{n,j} \Delta E_{0,j}) \to 0$ as $n \to \infty$. Since
$\boxnorm{W_n-W_0} \to 0$ as $n \to \infty$, this implies
$\boxnorm{W_n-W_0}^S \to 0$. This concludes the proof of the theorem.
\end{proof}

\begin{proof}[Proof of Corollary \ref{CclusterGen}]
Suppose $\delta_\Box(W_n, W_0) \to 0$ as $n \to \infty$. Then there
exists $(\sigma_n)_{n \geq 1} \subset S_{[0,1]}$ such that
$\boxnorm{W_n^{\sigma_n} \to W_0} \to 0$ as $n \to \infty$. By Theorem
\ref{TclusterGen},
$\boxnorm{(W_n^{\sigma_n}, c_{W_n^{\sigma_n}}) - (W_0, c_{W_0})}^S$ $\to 0$.
Now observe that by the invariance assumption on $f$, we have
\begin{equation*}
c_{W_n^{\sigma_n}}(y) = j \Leftrightarrow f(W_n^{\sigma_n}, y) \in A_j
\Leftrightarrow f(W_n, \sigma_n(y)) \in A_j \Leftrightarrow
c_{W_n}(\sigma_n(y)) = j.
\end{equation*}

\noindent It follows that the coloring defined by $f$ is consistent with
the $S_{[0,1]}$ action on $\W_S$ (see \eqref{EsigmaColor}). We therefore
conclude that $\delta_\Box^S((W_n,c_{W_n}),(W_0,c_{W_0})) \to 0$ as $n
\to \infty$.
\end{proof}

\subsection{Structural consistency of clustering by homomorphism
densities}\label{Shomdensities}

Our next goal is to provide a broad family of functions that satisfy the
hypotheses of Theorem \ref{TclusterGen}. Let $H$ be a $k$-labelled graph
with labelled vertices $1,\dots, k$. Recall that for $W \in \W$ and $x_1,
\dots, x_k \in [0,1]$, we define
\[
t_{x_1, \dots, x_k}(H,W) = \int_{[0,1]^{V(H) \setminus \{1,\dots,k\}} }
\prod_{e \in E(H)} W(x_{e_s}, x_{e_t})\ \prod_{v \not\in \{1,\dots,k\}}
dx_v.
\]
In particular, if $K_2$ has one labelled vertex, then 
\[
t_x(K_2, W) = \int_0^1 W(x,y)\ dy, \qquad x \in [0,1].
\]

\noindent The following result is a consequence of Lemma
\ref{LcountingGen} and Theorem \ref{TclusterGen}.

\begin{theorem}\label{Tdeg-cluster}
Fix $0 < \alpha < 1$ and a $1$-labelled graph $H$, and define
\begin{equation}\label{Enomass_gen}
\mathcal{D} = \mathcal{D}_{H,\alpha} := \{ W \in \W_{[0,1]} :
\mu_L\left(\left\{ y \in [0,1] : t_y(H, W) = \alpha\right\}\right) = 0
\}.
\end{equation}

\noindent Let $(W_n)_{n \geq 1} \subset \mathcal{D}$ such that $W_n \to
W_0 \in \W_{[0,1]}$, with $W_0 \in \mathcal{D}$. Let $S = \{1,2\}$ and
let $c_n : [0,1] \to S$ be defined for $n \geq 0$ by 
\[
c_n(y) := \begin{cases}
1, & \textrm{ if } t_y(H,W_n) < \alpha, \\
2, & \textrm{ if } t_y(H, W_n) \geq \alpha.
\end{cases}
\]

\noindent Then the sequence $(W_n, c_n)$ converges to $(W_0, c_0)$ in
$\mathcal{W}_S$.
\end{theorem}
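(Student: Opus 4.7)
The plan is to derive Theorem \ref{Tdeg-cluster} as a direct application of Theorem \ref{TclusterGen}. Take the metric space $(X, d_X) = (\R, |\cdot|)$, the node-level statistic
$$
f(W, y) := t_y(H, W),
$$
and the disjoint open sets $A_1 := (-\infty, \alpha)$ and $A_2 := (\alpha, +\infty)$. Since $t_y(H, W) \in [0, 1]$ for every $y$, we have $f(W, \cdot) \in L^\infty([0,1]) \subset L^1([0,1], \R)$, so the target space is the right one. With these choices, the hypothesis ``$f(W, y) \in A_1 \cup A_2$ for a.e.\ $y$'' becomes exactly $\mu_L(\{ y : t_y(H, W) = \alpha\}) = 0$, so the subset $\mathcal{D}$ appearing in Theorem \ref{TclusterGen} coincides with $\mathcal{D}_{H, \alpha}$ of \eqref{Enomass_gen}. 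Moreover, the coloring $c_W$ produced by Theorem \ref{TclusterGen} agrees with the $c_n$ defined in the theorem statement except on the null set $\{y : t_y(H, W_n) = \alpha\}$, so they are identified in $(\W_S, \boxnorm{\cdot}^S)$.

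The main step is to verify that $f$ is a continuous $\R$-valued node-level statistic in the sense of Definition \ref{Dnode-stat-metric}: i.e., that $\boxnorm{W_n - W_0} \to 0$ implies
$$
\int_0^1 |t_y(H, W_n) - t_y(H, W_0)|\, dy \longrightarrow 0.
$$
This is precisely the content of Lemma \ref{LcountingGen}, a pointwise/partially-labelled analogue of the standard Counting Lemma for $1$-labelled graphs. I would expect a bound of the shape
$$
\int_0^1 |t_y(H, W) - t_y(H, W')|\, dy \leq C_H \cdot \boxnorm{W - W'},
$$
with $C_H$ depending only on $|E(H)|$. The argument should follow the usual counting-lemma template: replace $W$ by $W'$ on one edge of $H$ at a time, and for each swap estimate the difference using the cut-norm after integrating over every vertex of $H$ except the labelled one; a final integration over $y \in [0,1]$ then produces the displayed $L^1$ bound.

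Once this continuity is established, Theorem \ref{TclusterGen} applied to $\mathcal{D}_{H,\alpha}$, $A_1$ and $A_2$ immediately yields $\boxnorm{(W_n, c_{W_n}) - (W_0, c_{W_0})}^S \to 0$, and combined with the almost-everywhere identification $c_{W_n} = c_n$ noted above, this gives the desired convergence $(W_n, c_n) \to (W_0, c_0)$ in $\W_S$. The main obstacle is the continuity of the $1$-labelled partial density $y \mapsto t_y(H, W)$ in $L^1$, i.e., Lemma \ref{LcountingGen}; once that is in hand, the rest of the proof is just unwinding definitions.
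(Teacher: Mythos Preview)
Your approach is correct and essentially identical to the paper's proof: define $f(W,y)=t_y(H,W)$, use Lemma \ref{LcountingGen} to show $f$ is a continuous node-level statistic, then apply Theorem \ref{TclusterGen} with the two open half-lines around $\alpha$. The only detail the paper makes explicit that your sketch elides is the passage from the cut-type bound $\bigl|\int_B(f(W_n,y)-f(W_0,y))\,dy\bigr|\le|E(H)|\,\boxnorm{W_n-W_0}$ (which is what Lemma \ref{LcountingGen} actually delivers) to the $L^1$ bound, done by taking $B=B_\pm:=\{y:\pm(f(W_n,y)-f(W_0,y))>0\}$.
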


In order to show the result, we first prove a generalization of the usual
Counting Lemma (see \cite[Lemma 10.24]{Lovasz:2013}). 

\begin{lemma}\label{LcountingGen}
Fix a finite simple graph $H$, graphons $W_e, W'_e \in \W_{[0,1]}$ for
all edges $e \in E(H)$, and measurable subsets $F_v \subset [0,1]$ for
all vertices $v \in V(H)$. Now define
\[
{\bf W} := (W_e)_{e \in E(H)}, \quad {\bf W'} := (W'_e)_{e \in E(H)},
\quad {\bf F} := \times_{v \in V(H)} F_v \subset [0,1]^{|V(H)|},
\]

\noindent as well as the following ``generalized homomorphism density'':
\begin{equation}
t_{{\bf F}}(H,{\bf W}) := \int_{{\bf F}} \prod_{e \in E(H)} W_e(x_{e_s},
x_{e_t}) \cdot \prod_{v \in V(H)} dx_v.
\end{equation}

\noindent Then,
\begin{equation}\label{Egen-count}
\left| t_{{\bf F}}(H,{\bf W}) - t_{{\bf F}}(H,{\bf W'}) \right| \leq
\sum_{e \in E(H)} \left(\prod_{v \neq e_s, e_t} \mu_L(F_v)\right)
\min\left\{ \mu_L(F_{e_s}) \mu_L(F_{e_t}), \boxnorm{ W_e - W'_e }
\right\}.
\end{equation}
\end{lemma}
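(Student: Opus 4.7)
My plan is to prove Lemma \ref{LcountingGen} by the standard edge-telescoping argument, coupled with a factorization trick that yields the $\min$ on the right-hand side. First, I enumerate $E(H) = \{e_1,\dots,e_m\}$ and write
\[
\prod_{i=1}^m W_{e_i} - \prod_{i=1}^m W'_{e_i} = \sum_{i=1}^m \Bigl(\prod_{j<i} W'_{e_j}\Bigr)(W_{e_i} - W'_{e_i})\Bigl(\prod_{j>i} W_{e_j}\Bigr),
\]
so that $t_{\mathbf{F}}(H,\mathbf{W}) - t_{\mathbf{F}}(H,\mathbf{W'})$ splits as a sum of $m$ integrals indexed by edges. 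The target becomes bounding each such integral by $\prod_{v \neq e_s, e_t} \mu_L(F_v) \cdot \min\{\mu_L(F_{e_s})\mu_L(F_{e_t}), \boxnorm{W_e - W'_e}\}$, which immediately yields \eqref{Egen-count} upon summation.

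For a fixed edge $e = (e_s, e_t)$ in the telescope, the trivial bound is immediate: every factor $W_{e'}$ or $W'_{e'}$ for $e' \neq e$ takes values in $[0,1]$, and $|W_e - W'_e| \leq 1$, so integrating over $\mathbf{F}$ yields at most $\prod_v \mu_L(F_v)$, i.e. the first term in the minimum. For the cut-norm bound, the key observation is that since $H$ is simple (no multi-edges), no edge $e' \neq e$ connects $e_s$ directly to $e_t$. Thus, after freezing $\mathbf{x}_{-e_s,-e_t} := (x_v)_{v \neq e_s, e_t}$, the integrand $\prod_{e' \neq e} U_{e'}(x_{(e')_s}, x_{(e')_t})$ (with $U_{e'} \in \{W_{e'}, W'_{e'}\}$) factors as $A(x_{e_s}) \cdot B(x_{e_t}) \cdot C$, where $A, B, C \in [0,1]$ are obtained by grouping edges incident to $e_s$ only, to $e_t$ only, and to neither, respectively.

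The technical ingredient I need is the following ``tensor'' cut-norm bound: for any measurable $0 \le a, b \le 1$ on $[0,1]$ and any $U \in \W$,
\[
\biggl|\int a(x) b(y) U(x,y)\, dx\, dy\biggr| \le \boxnorm{U}.
\]
This follows from the layer-cake identity $a(x) = \int_0^1 \mathbf{1}_{\{a > s\}}\, ds$ (and analogously for $b$), which by Fubini reduces the LHS to $\int_0^1 \int_0^1 \bigl(\int_{\{a>s\} \times \{b>t\}} U\bigr)\, ds\, dt$; each inner integral is at most $\boxnorm{U}$ in absolute value, so the outer integral is bounded by $\boxnorm{U}$ as well. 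Applying this lemma with $a, b$ equal to $A, B$ restricted to $F_{e_s}, F_{e_t}$ (extended by zero), and then integrating the bound $C \le 1$ over $\mathbf{F}_{-e_s,-e_t}$, produces the advertised $\prod_{v \neq e_s, e_t} \mu_L(F_v) \cdot \boxnorm{W_e - W'_e}$ estimate.

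The main obstacle is pure bookkeeping: one must track which factors in the post-telescope integrand depend on which variables, so that the factorization into $A(x_{e_s}) B(x_{e_t}) C$ is rigorously justified, and verify that the domain restriction $\mathbf{F}$ is absorbed coherently by placing each indicator $\mathbf{1}_{F_v}$ into the appropriate factor. Once this setup is clean, the per-edge bound follows from sequential integration (inner over $(x_{e_s}, x_{e_t})$, outer over $\mathbf{x}_{-e_s,-e_t}$), and telescoping produces the final inequality.
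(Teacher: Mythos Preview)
Your proposal is correct and follows essentially the same route as the paper: telescope edge-by-edge, then for each edge freeze the remaining variables and factor the surviving product so that one factor depends only on $x_{e_s}$ and the other only on $x_{e_t}$, reducing to the bilinear cut-norm estimate. The only cosmetic difference is that you prove the key inequality $\bigl|\int a(x)b(y)U(x,y)\,dx\,dy\bigr|\le \boxnorm{U}$ for $0\le a,b\le 1$ directly via layer-cake, whereas the paper simply cites \cite[Lemma~8.10]{Lovasz:2013}; your grouping into $A\cdot B\cdot C$ is also slightly finer than the paper's two-factor split, but the content is the same.
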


\begin{proof}
We adapt the proof of \cite[Lemma 10.24]{Lovasz:2013}. We first claim
that for every edge $(u,v) \in E(H)$,
\begin{align*}
\left| \int_{{\bf F}} \prod_{\substack{(i,j) \in E(H) \\ (i,j) \ne
(u,v)}} W_{ij}(x_i, x_j) (W_{uv}(x_u, x_v)-W'_{uv}(x_u, x_v)) \ dx\right|
\leq& \left(\prod_{w \neq u,v} \mu_L(F_w)\right) \times \\
&\min( \mu_L(F_u) \mu_L(F_v), \boxnorm{ W_e - W'_e }).
\end{align*}

Indeed, note that the left-hand side of the expression can be written as 
\[
\left| \int_{{\bf F}} f(x) g(x) (W_{uv}(x_u, x_v)-W'_{uv}(x_u, x_v)) \
dx\right|, 
\] 
where 
\[
f(x) := \prod_{(i,j) \in \nabla(u) \setminus (u,v)} W_{ij}(x_i, x_j),
\qquad g(x) := \prod_{(i,j) \in E(H) \setminus \nabla(u)} W_{ij}(x_i,
x_j).
\]

\noindent Here $\nabla(u)$ denotes the set of edges with one endpoint
equal to $u$. Note that $f(x)$ does not depend on $x_v$ and $g(x)$ does
not depend on $x_u$. Thus, by \cite[Lemma 8.10]{Lovasz:2013}, 
\[
\left| \int_{F_u \times F_v} f(x) g(x) (W_{uv}(x_u, x_v)-W'_{uv}(x_u,
x_v)) \ dx_u dx_v\right| \leq \min( \mu_L(F_u) \mu_L(F_v), \boxnorm{W_e -
W'_e}).
\]

\noindent The claim follows by integrating with respect to the remaining
variables. We immediately obtain the desired result by writing $\left|
t_{{\bf F}}(H,{\bf W}) - t_{{\bf F}}(H,{\bf W'}) \right|$ as a
telescoping sum where each term is as in the claim.
\end{proof}

We now show that clustering according to homomorphism densities is
structurally consistent.

\begin{proof}[Proof of Theorem \ref{Tdeg-cluster}]
Let $f(W,y) := t_y(H, W)$. By Lemma \ref{LcountingGen}, for every
sequence $W_n \to W_0$ and every measurable subset $B \subset [0,1]$, 
\[
\left|\int_B [f(W_n,y) - f(W_0,y)]\ dy\right| \leq |E(H)| \cdot
\boxnorm{W_n - W_0}.
\]

\noindent Set $B_\pm := \{ y \in [0,1] : \pm (f(W_n,y) - f(W_0,y)) > 0
\}$. Then by the discussion after Definition \ref{DcontNLS},
\begin{align*}
\frac{1}{2} \| f(W_n) - f(W_0)\|_1 = &\ \frac{1}{2} \left| \int_{B_+}
(f(W_n,y) - f(W_0,y))\ dy \right| + \frac{1}{2} \left| \int_{B_-}
(f(W_n,y) - f(W_0,y))\ dy \right|\\
\leq &\ |E(H)| \cdot \boxnorm{W_n - W_0}.
\end{align*}

\noindent It follows that $f$ is a continuous node-level statistic $:
\W_{[0,1]} \to L^1([0,1],[0,1])$.
The result now follows by Theorem \ref{TclusterGen}, with $A_1 =
[0,\alpha)$ and $A_2 = (\alpha,1]$.
\end{proof}

\begin{remark}
Theorem \ref{Tdeg-cluster} easily extends to any finite set $H_1, \dots,
H_k$ of $1$-labelled graphs, and any collection of disjoint open sets
$A_1, \dots, A_N$ in the cube $[0,1]^k$. Namely, define
\begin{equation}
\mathcal{D} := \{ W \in \W_{[0,1]} : (t_y(H_i, W))_{i=1}^k \in
\bigcup_{j=1}^N A_j \textrm{ for a.e. } y \in [0,1] \}.
\end{equation}

\noindent Given $W \in \mathcal{D}$, define $c_W : [0,1] \to S := \{ 1,
\dots, N \}$ via: $c_W(y) = j$ if $(t_y(H_i,W))_{i=1}^k \in A_j$. Then
the clustering map $W \mapsto (W,c_W)$ is continuous on $\mathcal{D}$.
\end{remark}

\begin{ex}\label{Ex-deg-mass}
We briefly explain why the assumption $W_n \in \mathcal{D}$ is required
in Theorem \ref{TclusterGen} (instead of allowing all of $\W_{[0,1]}$).
Consider the case of edge-density, where $H = K_2$ consists of an edge.
Suppose $Y \subset [0,1]$ has positive measure, and $W_0 \in \W_{[0,1]}$
is such that
\[
W_0(x,y) \in (0,1)\ \forall x,y \in [0,1], \qquad
t_y(H,W_0) = \alpha\ \forall y \in Y.
\]

\noindent Fix any partition $Y = Y' \sqcup Y''$ into sets of positive
measure, and $\epsilon \in [0,1]$, define $W_{Y',\epsilon} \in
\W_{[0,1]}$ via:
\[
W_{Y',\epsilon}(x,y) = \begin{cases}
\epsilon + (1-\epsilon) W_0(x,y), \qquad & \text{if } x,y \in Y';\\
(1-\epsilon) W_0(x,y), & \text{if } x,y \in Y'';\\
W_0(x,y), & \text{otherwise.}
\end{cases}
\]

\noindent For each $Y'$, note that as $\epsilon \to 0^+$, we have
$W_{Y',\epsilon} \to W_0$ in $L^1$, hence in cut-norm. 
On the other hand, it is easily verified that for the graphon
$W_{Y',\epsilon}$,
\begin{alignat*}{5}
& \deg(y) = \alpha + \epsilon \int_{Y'} (1-W_0(x,y))\ dx && >\
\alpha, \qquad && \forall y \in Y',\\
& \deg(y) = \alpha - \epsilon \int_{Y''} W_0(x,y)\ dx && <\
\alpha, \qquad && \forall y \in Y''.
\end{alignat*}

\noindent Therefore different choices of $Y'$ would give inconsistent
limit clusters.
\end{ex}

\subsection{Sensitivity of clustering based on node-level statistics}

We conclude this section by discussing the sensitivity of the clustering
in Theorem \ref{TclusterGen}. A more sensitive notion of clustering would
be obtained if one could show that the function $F : W \mapsto (W,c_W)$
is Lipschitz on $\mathcal{D}$, as doing so would yield a greater
understanding of approximation errors. However, the following simple
example shows this is not always true.

\begin{ex}\label{Eerdosrenyi}
Suppose $H$ is any $1$-labelled graph with at least one edge, and
consider the clustering procedure in Theorem \ref{Tdeg-cluster}, for some
$\alpha \in (0,1)$. Now one can choose Erd\"os--R\'enyi graphs $W_1
\equiv p_1, W_2 \equiv p_2$ where $p_1^{|E(H)|} < \alpha < p_2^{|E(H)|}$.
Then the clustering algorithm shows that all vertices of $W_1$ are
colored $0$, while all vertices of $W_2$ are colored $1$, whence
$\delta_\Box^S(W_1,W_2) = 1$. On the other hand, $p_1,p_2$ can be chosen
arbitrarily close to one another, whence $\delta_\Box(W_1, W_2)$ can be
made as small as desired. It follows that $W \mapsto (W,c_W)$ is not
Lipschitz on $\mathcal{D}_{H,\alpha}$ (see Equation \eqref{Enomass_gen}).
\end{ex}

We now show that the problem in the previous example lies in the fact
that the sets $A_j$ are not necessarily separated. If instead they were
separated, in some sense ``discretizing'' the situation, then the
clustering map $F$ is Lipschitz, as long as the node-level statistic is.
More precisely, we have: 

\begin{theorem}\label{Tlipschitz}
Fix $d_{\min} > 0$, and suppose $A_1, \dots, A_N$ are disjoint open sets
in a metric space $(X, d_X)$, such that distinct sets $A_j$ are at least
$d_{\min}$ distance apart.
Also fix a continuous node-level statistic $f : \W_{[0,1]} \to
L^1([0,1],X)$, and define $\mathcal{D} \subset \W_{[0,1]}$ and the
clustering map $F : (\mathcal{D}, \boxnorm{\cdot}) \to (\W_S,
\boxnorm{\cdot}^S)$ as in Equation \eqref{Enomass}.

Now if the node-level statistic $f$ is Lipschitz, then so is the
clustering map $F$. More generally, suppose $\varphi : [0,\infty) \to
[0,\infty)$ satisfies
\[
d_1(f(W), f(W')) \leq \varphi ( \boxnorm{W - W'}), \qquad \forall W,W'
\in \mathcal{D}.
\]

\noindent Then $\boxnorm{F(W) - F(W')}^S \leq \varphi_1( \boxnorm{W -
W'})$, where $\displaystyle \varphi_1(y) := y +
\frac{\varphi(y)}{d_{\min}}$.
\end{theorem}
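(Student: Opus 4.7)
The plan is to unpack the definition of the colored cut-norm, observe that the uncolored piece contributes exactly $\|W-W'\|_\Box$, and then bound the color-mismatch piece using the $d_{\min}$-separation of the sets $A_j$ together with the assumed continuity modulus $\varphi$ of $f$.

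First I would write
\[
\|F(W)-F(W')\|_\Box^S = \|W-W'\|_\Box + \sum_{s \in S} \mu_L\bigl(c_W^{-1}(s)\,\Delta\, c_{W'}^{-1}(s)\bigr),
\]
so that the first summand already matches the $y$-term of $\varphi_1$. It remains to control the total symmetric-difference term by $\varphi(\|W-W'\|_\Box)/d_{\min}$. To that end, define the disagreement set $D := \{y \in [0,1] : c_W(y) \neq c_{W'}(y)\}$. A point $y$ lies in $c_W^{-1}(s) \Delta c_{W'}^{-1}(s)$ for some $s$ exactly when $y \in D$, and each such $y$ is counted by the two colors involved, so $\sum_{s} \mu_L(c_W^{-1}(s) \Delta c_{W'}^{-1}(s))$ is a bounded multiple of $\mu_L(D)$.

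The key observation is the separation step: if $y \in D$, then $f(W,y) \in A_j$ and $f(W',y) \in A_{j'}$ for some $j \neq j'$, and by the assumption on the sets $A_1,\dots,A_N$ we have $d_X(f(W,y), f(W',y)) \geq d_{\min}$. Integrating this pointwise inequality over $D$ gives
\[
d_{\min}\, \mu_L(D) \leq \int_D d_X\bigl(f(W,y), f(W',y)\bigr)\, dy \leq d_1(f(W), f(W')) \leq \varphi\bigl(\|W-W'\|_\Box\bigr),
\]
where the last step uses the hypothesized modulus of continuity of the node-level statistic. Rearranging yields $\mu_L(D) \leq \varphi(\|W-W'\|_\Box)/d_{\min}$, and combining with the earlier decomposition gives the desired bound $\|F(W) - F(W')\|_\Box^S \leq \varphi_1(\|W-W'\|_\Box)$. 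The Lipschitz case is the special instance $\varphi(y) = L y$.

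There is no serious obstacle here; the argument is essentially a one-line reduction once the colored cut-norm is unpacked. The only subtle point is bookkeeping: the sum over colors of the symmetric differences counts every $y \in D$ twice, so a careful normalization (either absorbing the constant $2$ into $\varphi_1$, or reading the colored cut-norm as the ``unordered'' measure of disagreement) is needed to match the exact formula $\varphi_1(y) = y + \varphi(y)/d_{\min}$ as stated. Beyond that, the proof is a direct consequence of the pointwise separation inequality and the integral definition of $d_1$.
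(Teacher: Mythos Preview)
Your approach is essentially identical to the paper's: unpack the colored cut-norm, note that a color disagreement at $y$ forces $d_X(f(W,y),f(W',y)) \geq d_{\min}$, and bound the disagreement measure via Markov's inequality by $d_1(f(W),f(W'))/d_{\min} \leq \varphi(\boxnorm{W-W'})/d_{\min}$. Your bookkeeping remark is in fact sharper than the paper's own argument: each $y \in D$ lies in exactly two of the sets $c_W^{-1}(s)\,\Delta\,c_{W'}^{-1}(s)$, so $\sum_{s} \mu_L(c_W^{-1}(s)\,\Delta\,c_{W'}^{-1}(s)) = 2\mu_L(D)$, and the paper's displayed chain silently drops this factor of $2$ --- the honest bound is $\varphi_1(y) = y + 2\varphi(y)/d_{\min}$.
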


For certain continuous (even Lipschitz) node-level statistics $f$,
Theorem \ref{Tlipschitz} also has a converse: the clustering map $F$ is
Lipschitz if and only if the sets $A_j$ are separated. This is the case,
for example, for any $1$-labelled homomorphism density, for which this
converse was shown in Example \ref{Eerdosrenyi}.

\begin{proof}
Define $g : [0,1] \to [0,\infty)$ via: $g(y) := d_X(f(W,y), f(W',y))$.
Note that if $f(W,y), f(W',y)$ belong to distinct sets $A_j$, then their
distance is at least $d_{\min}$. Hence, we compute using Markov's
inequality:
\begin{align*}
\boxnorm{F(W) - F(W')}^S = &\ \boxnorm{W-W'} + \sum_{s \in S}
\mu_L(c_W^{-1}(s) \Delta c_{W'}^{-1}(s))\\
\leq &\ \boxnorm{W-W'} + \mu_L \{ y \in [0,1] : g(y) \geq d_{\min} \}\\
\leq &\ \boxnorm{W-W'} + \frac{1}{d_{\min}} \int_0^1 g(y)\ dy\\
= &\ \boxnorm{W-W'} + \frac{1}{d_{\min}} d_1(f(W),f(W'))\\
\leq &\ \boxnorm{W-W'} + \frac{1}{d_{\min}} \varphi(\boxnorm{W-W'})
= \varphi_1(\boxnorm{W-W'}),
\end{align*}

\noindent as desired.
\end{proof}

\begin{remark}
The proof also demonstrates that if we merely know $f$ is continuous,
then so is $F$. To see why, simply stop the preceding calculation before
the final inequality, and take $W' \to W$. This provides a second, easier
proof of the continuity of $F$ (when $f$ is continuous), in the simpler
setting where the open sets $A_j$ are separated in $X$.
\end{remark}

\section{Spectral clustering}\label{section:spectral}

One of the most popular algorithms for graph partitioning is spectral
clustering (see e.g.~\cite{vonLuxburg-tutorial}).  The algorithm proceeds
by constructing the graph Laplacian $L_G$ from a graph $G$ by taking a
diagonal matrix $D_G$ of the degrees of $G$ and subtracting the adjacency
matrix of $G$ from it.  This is known as the \textit{unnormalized
Laplacian}. In practice, it is also common to work with a
\textit{normalized} version of the Laplacian $L'_G$, which is derived
from the unnormalized Laplacian by normalizing both the rows and columns
by $D_G^{-1/2}$, and tends to give better results (see
\cite{vonLuxburg-tutorial} for more details).

Note that the Laplacian $L_G$ always has a constant eigenvector of
eigenvalue $0$. Moreover, the matrix $L_G$ is positive semidefinite as it
is diagonally dominant. Barring multiplicity issues, the first $m$
associated eigenvectors of the smallest non-zero eigenvalues of $L_G$
define an embedding of $V(G) \to \mathbb{R}^m$ by their coordinates, and
spectral clustering partitions the vertices of $G$ by how they cluster in
$\mathbb{R}^m$.  The same procedure can be carried out for the
normalized Laplacian $L_G'$.  To prove structural consistency of spectral
clustering, we introduce the notion of the Laplacian of a graphon $W$. We
then determine when the normalized and unnormalized Laplacians $L'_W,
L_W$ are continuous constructions under the cut topology. Finally, we
evaluate how the convergence of the spectrum to that of the limit
operators leads to structural consistency. As we will show, in general,
normalized spectral clustering has better consistency properties than
unnormalized clustering. Our results thus confirm previous findings from
\cite{vonLuxburg2008}.

As explained in \cite[Section 7.5]{Lovasz:2013}, a graphon $W \in
\W_{[0,1]}$ can be thought of as a self-adjoint integral operator $T_W$,
where 
\begin{equation}
T_W(f) = \int_0^1 W(x,y) f(y)\ dy.
\end{equation}

\noindent When we think of $T_W$ as an operator $T_W: L^\infty([0,1]) \to
L^1([0,1])$, then the operator norm of $T_W$ is equivalent to the
cut-norm (\cite[Lemma 8.11]{Lovasz:2013}): 
\begin{equation}\label{Ecutinfty1}
\boxnorm{W} \leq \| T_W \|_{\infty \to 1} \leq 4 \boxnorm{W}.
\end{equation}

\noindent We also often see $T_W$ as an operator $T_W: L^2 \to L^2$. In
that case, the resulting operator is Hilbert--Schmidt.  In particular,
$T_W$ has a countable spectrum, and can only have $0$ as an accumulation
point. We can therefore define the eigenvalues of $W$ to be the
eigenvalues of the associated Hilbert--Schmidt operator $T_W: L^2 \to
L^2$. We shall denote these eigenvalues by $\lambda_1(W), \lambda_2(W)$,
etc., where 
\[
|\lambda_1(W)| \geq |\lambda_2(W)| \geq \cdots.
\] 

\noindent Since $W$ is symmetric, the operator is self-adjoint. Therefore
we can choose an orthonormal basis $f_i \in L^2$ of eigenfunctions
associated to $\lambda_i(W)$ with appropriate multiplicities so that 
\begin{equation}\label{Espectral}
W(x,y) = \sum_{i=1}^\infty \lambda_i(W) f_i(x) f_i(y),
\end{equation} 

\noindent where $\|f_i\|_2 = 1$, and where \eqref{Espectral} converges in
the $L^2$ sense.

\subsection{Convergence of eigenvalues and eigenvectors}

We now examine the behavior of the eigenvalues of a sequence of graphons.

\begin{definition}[{\cite[Definition 1.1]{Szegedy2011}}]
Let $W \in \W_{[0,1]}$ have spectral decomposition as in
\eqref{Espectral} with eigenvalues $\lambda_i = \lambda_i(W)$. Given
$\lambda \geq 0$, we define a \textit{cutoff graphon} $[W]_\lambda$ by 
\begin{equation*}
[W]_\lambda(x,y) := \sum_{\{i : |\lambda_i| > \lambda\}} \lambda_i f_i(x)
f_i(y).
\end{equation*}
\end{definition}

Notice that for any sequence of graphons such that $\boxnorm{W_n - W_0}
\to 0$ as $n \to \infty$, we have 
\[
W_0 = \lim_{n \to \infty} \lim_{k \to \infty} [W_n]_{\alpha_k}
\]
in $L^2$, whenever $\alpha_k \to 0$ as $k \to \infty$. The following
theorem characterizes the convergence of graphons in the cut-norm, in
terms of the  convergence of its cutoffs $[W_n]_\lambda$. 

\begin{theorem}[{see \cite[Proposition
1.1]{Szegedy2011}}]\label{Tszegedy1}
Let $\{W_n\}_{n \geq 1} \subset \W_{[0,1]}$ and let $W_0 \in \W_{[0,1]}$.
Then the following are equivalent: 
\begin{enumerate}
\item $\boxnorm{W_n-W_0} \to 0$ as $n \to \infty$.
\item There is a decreasing sequence $\{\alpha_k\}_{k \geq 1} \subset
(0,\infty)$ with $\lim_{k \to \infty} \alpha_k = 0$ such that
$\|[W_n]_{\alpha_k}-[W_0]_{\alpha_k}\|_2 \to 0$ as $n \to \infty$ for
every $j$.
\end{enumerate}
Furthermore, if $(2)$ holds, then 
\begin{equation*}
W_0 = \lim_{k \to \infty} \lim_{n \to \infty} [W_n]_{\alpha_k}
\end{equation*}
in the $L^2$ sense. 
\end{theorem}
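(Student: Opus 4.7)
The plan is to prove the two implications separately and then deduce the final $L^2$-convergence assertion, relying crucially on the Hilbert--Schmidt structure of $T_W$.

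For $(2) \Rightarrow (1)$, I would use the telescoping inequality
$$\|W_n - W_0\|_\Box \leq \|W_n - [W_n]_{\alpha_k}\|_\Box + \|[W_n]_{\alpha_k} - [W_0]_{\alpha_k}\|_\Box + \|[W_0]_{\alpha_k} - W_0\|_\Box,$$
together with the fundamental tail bound $\|W - [W]_\lambda\|_\Box \leq \lambda$ valid for every graphon $W$ and every $\lambda > 0$. This bound follows from the spectral decomposition \eqref{Espectral}: the operator $T_{W - [W]_\lambda}$ on $L^2$ is supported on eigenvalues of absolute value $\leq \lambda$, so its $L^2 \to L^2$ operator norm is at most $\lambda$. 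By Cauchy--Schwarz on the probability space $[0,1]$, this dominates $\|T_{W-[W]_\lambda}\|_{\infty \to 1}$, which in turn controls the cut-norm via \eqref{Ecutinfty1}. The middle term is bounded by $\|[W_n]_{\alpha_k} - [W_0]_{\alpha_k}\|_2$ (since $\|\cdot\|_\Box \leq \|\cdot\|_1 \leq \|\cdot\|_2$ on $[0,1]^2$) and tends to zero as $n \to \infty$ for each fixed $k$. Taking $\limsup_n$ and then $k \to \infty$ yields (1).

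For $(1) \Rightarrow (2)$, the key idea is that cut-norm convergence upgrades to strong spectral convergence. By the Counting Lemma, $W_n \to W_0$ in cut-norm forces $t(C_k, W_n) \to t(C_k, W_0)$ for every cycle $C_k$; since $t(C_k, W) = \sum_i \lambda_i(W)^k$, this gives convergence of all spectral power sums. Combined with the uniform $L^\infty$ bound $|\lambda_i(W)| \leq 1$, a standard argument (e.g.\ extracting subsequences and comparing generating functions) yields $\lambda_i(W_n) \to \lambda_i(W_0)$ for each $i \geq 1$, after ordering by absolute value. Since $T_{W_0}$ is Hilbert--Schmidt, its spectrum is countable with only $0$ as accumulation point, and the same holds for each $T_{W_n}$; thus the set $\mathrm{spec}(T_{W_0}) \cup \bigcup_n \mathrm{spec}(T_{W_n})$ is countable, so one can choose a decreasing sequence $\alpha_k \downarrow 0$ avoiding it. For such $\alpha = \alpha_k$, the indicator $\chi_{\{|t| > \alpha\}}$ is continuous on a neighborhood of $\mathrm{spec}(T_{W_0})$, and a functional-calculus argument shows that the spectral projections converge in Hilbert--Schmidt norm; equivalently, $[W_n]_{\alpha_k} \to [W_0]_{\alpha_k}$ in $L^2$, giving (2). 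The final assertion then follows: we have just shown $\lim_n [W_n]_{\alpha_k} = [W_0]_{\alpha_k}$ in $L^2$, while the spectral theorem gives $\|W_0 - [W_0]_{\alpha_k}\|_2^2 = \sum_{|\lambda_i(W_0)| \leq \alpha_k} \lambda_i(W_0)^2 \to 0$ as $k \to \infty$.

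The main obstacle is the spectral convergence step in $(1) \Rightarrow (2)$: cut-norm convergence is a rather weak notion (only convergence of integrals against rectangle indicators), and promoting it to convergence of spectral projections in $L^2$ requires carefully exploiting the Hilbert--Schmidt character of the operators together with the fact that away from $0$ the spectra are discrete and isolated. Once this functional-calculus step is in place, everything else is bookkeeping with the triangle inequality and the spectral decomposition.
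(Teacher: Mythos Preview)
The paper does not prove Theorem~\ref{Tszegedy1}; it is quoted from \cite{Szegedy2011}, so there is no in-paper proof to compare against. I therefore evaluate your sketch on its own merits.

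Your direction $(2)\Rightarrow(1)$ is correct and cleanly argued: the tail bound $\|W-[W]_\lambda\|_\Box\le\lambda$ via $\|T_{W-[W]_\lambda}\|_{2\to 2}\le\lambda$ and the domination $\|\cdot\|_\Box\le\|\cdot\|_2$ on $[0,1]^2$ give the triangle-inequality estimate exactly as you wrote. The ``furthermore'' clause also follows immediately as you indicate.

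The gap is in $(1)\Rightarrow(2)$. Eigenvalue convergence (which you correctly extract from convergence of the cycle densities $t(C_k,\cdot)=\sum_i\lambda_i^k$) together with a gap condition on $\alpha$ is \emph{not} enough to force convergence of the spectral projections in Hilbert--Schmidt norm. The functional-calculus continuity you invoke, $f(T_n)\to f(T_0)$ for $f=\chi_{\{|t|>\alpha\}}$, requires at minimum norm-resolvent (equivalently, $L^2\to L^2$ operator-norm) convergence of $T_{W_n}$ to $T_{W_0}$; cut-norm convergence only yields $\|T_{W_n}-T_{W_0}\|_{\infty\to 1}\to 0$ by \eqref{Ecutinfty1}, which is strictly weaker. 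Without an extra ingredient one can have all eigenvalues matching while the eigenspaces rotate (think of rank-one projections $\langle\cdot,e_n\rangle e_n$ on $\ell^2$). What the Szegedy argument actually uses is that the cut-norm controls \emph{compositions}: for bounded kernels $U$, $\|U\circ(W_n-W_0)\|_2$ and $\|(W_n-W_0)\circ U\|_2$ are bounded by a constant times $\|W_n-W_0\|_\Box$, so for $k\ge 2$ one gets $\|T_{W_n}^k-T_{W_0}^k\|_{HS}\to 0$; this operator-norm convergence of the squares is precisely what makes the projection-valued functional calculus go through. Your overall architecture is right, but this composition estimate is the missing load-bearing lemma, not a bookkeeping detail.
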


Using the above result, we can now understand the behavior of the
eigenvalues and eigenvectors of a sequence of graphons.

\begin{theorem}\label{Teig}
Fix $\alpha > 0$. Let $(W_n)_{n \geq 1} \subset \W$ be uniformly bounded
in $L^\infty$ by $\alpha$, and suppose $\boxnorm{W_n-W_0} \to 0$ as $n
\to \infty$. Denote by $\lambda_1(W_n), \lambda_2(W_n), \dots$ the
sequence of nonzero eigenvalues of $W_n$ in decreasing absolute value.
For $k \geq 1$, let $P_k(W_n): L^2([0,1]) \to L^2([0,1])$ denotes the
projection on the eigenspace of $W_n$ associated to $\{\lambda_k(W_n),
-\lambda_k(W_n)\}$. Define $\lambda_k(W_0)$ and $P_k(W_0)$ similarly.
Then for all $k \geq 1$, 
\begin{enumerate}
\item $\lambda_k(W_n) \to \lambda_k(W_0)$ as $n \to \infty$; 
\item $\|P_k(W_n) - P_k(W_0)\|_{\mathcal{B}(L^2([0,1])} \to 0$ as $n \to
\infty$, where
$\|T\|_{\mathcal{B}(L^2[0,1])} := \sup_{\|f\|_2 = 1} \|Tf\|_2$
denotes the operator norm of $T: L^2([0,1]) \to L^2([0,1])$.
\end{enumerate}

\noindent In particular, suppose for each $k \geq 1$ that
$\lambda_k(W_0)$ is a simple eigenvalue with associated eigenvector $f_0
\in L^2([0,1])$, and let $f_n \in L^2([0,1])$ be an eigenvector
associated to $\lambda_k(W_n)$. Define $p_n(x,y) := f_n(x) f_n(y)$, and
$p_0(x,y) := f_0(x) f_0(y)$. Then $\lambda_k(W_n)$ is simple for $n$
large enough and $\|p_n - p_0\|_{L^2([0,1]^2)} \to 0$ as $n \to \infty$. 
\end{theorem}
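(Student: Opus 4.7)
My plan is to reduce the theorem to classical spectral perturbation theory for compact self-adjoint operators on a Hilbert space. The essential step is to upgrade the hypotheses (a uniform $L^\infty$-bound together with cut-norm convergence $\boxnorm{W_n - W_0} \to 0$) to \emph{operator-norm} convergence $\|T_{W_n} - T_{W_0}\|_{\mathcal{B}(L^2([0,1]))} \to 0$. Once I have that, parts (1), (2), and the ``in particular'' statement will all follow from standard perturbation results.

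To obtain the operator-norm convergence I will apply Theorem \ref{Tszegedy1} to the sequence $W_n \to W_0$, which produces a decreasing sequence $\alpha_k \downarrow 0$ such that $\|[W_n]_{\alpha_k} - [W_0]_{\alpha_k}\|_{L^2([0,1]^2)} \to 0$ as $n \to \infty$ for each $k$. The key elementary observation is that for any graphon $W$, the spectral decomposition \eqref{Espectral} directly yields
\[
\|T_W - T_{[W]_{\alpha}}\|_{\mathcal{B}(L^2([0,1]))} \le \alpha,
\]
since $W - [W]_\alpha$ is a self-adjoint sum of rank-one kernels all of whose eigenvalues have absolute value at most $\alpha$. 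Using that $L^2$-convergence of kernels implies Hilbert--Schmidt (hence operator-norm) convergence of the associated integral operators, a three-epsilon argument through $[W_n]_{\alpha_k}$ and $[W_0]_{\alpha_k}$ --- first choose $k$ so $\alpha_k$ is small, then choose $n$ so the cutoffs are $L^2$-close --- yields $\|T_{W_n} - T_{W_0}\|_{\mathcal{B}(L^2([0,1]))} \to 0$.

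With operator-norm convergence established, part (1) follows from Weyl's inequality for singular values, which for self-adjoint compact operators coincide with $|\lambda_k|$ in decreasing order; signs align eventually because positive and negative parts of the spectrum of $T_{W_0}$ away from $0$ are isolated. For part (2), I will use the Riesz projection: since $\lambda_k(W_0) \ne 0$ and the nonzero spectrum of $T_{W_0}$ is discrete, there is a closed contour $\Gamma_k \subset \mathbb{C}$ enclosing exactly the pair $\{\lambda_k(W_0), -\lambda_k(W_0)\}$ and avoiding the rest of the spectrum. By operator-norm convergence, $\Gamma_k$ also avoids $\sigma(T_{W_n})$ for large $n$, and the Riesz projection $-\frac{1}{2\pi i}\oint_{\Gamma_k}(T - z)^{-1}\, dz$ depends continuously on $T$ in operator norm (cf.~Kato). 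For the simple-eigenvalue refinement, I repeat the contour argument with a smaller contour enclosing only $\lambda_k(W_0)$ (possible since $\lambda_k(W_0) \ne -\lambda_k(W_0)$); the resulting Riesz projection is rank one in the limit, and the rank of Riesz projections is locally constant under operator-norm perturbation, so for large $n$ the corresponding projection is also rank one, forcing $\lambda_k(W_n)$ to be simple. Because the rank stays $\le 1$, operator-norm convergence upgrades to Hilbert--Schmidt convergence; since the Hilbert--Schmidt norm of an integral operator equals the $L^2$-norm of its kernel, and the rank-one projection onto $\mathrm{span}(f)$ (with $\|f\|_2 = 1$) has kernel $f(x)f(y)$, this is exactly $\|p_n - p_0\|_{L^2([0,1]^2)} \to 0$.

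The main obstacle is the first step: cut-norm convergence is strictly weaker than $L^2$-convergence of kernels, so in general it need not imply operator-norm convergence of the integral operators on $L^2$. The uniform $L^\infty$-bound is essential here, and Theorem \ref{Tszegedy1} paired with the spectral-cutoff inequality $\|T_W - T_{[W]_\alpha}\|_{\mathcal{B}(L^2)} \le \alpha$ is exactly what threads the needle. Everything downstream is routine perturbation theory for compact self-adjoint operators.
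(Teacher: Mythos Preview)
Your proposal is correct and takes a route that is related to, but organized differently from, the paper's own proof. Both arguments rest on Theorem~\ref{Tszegedy1} (Szegedy's characterization of cut-norm convergence via $L^2$-convergence of spectral cutoffs), and both exploit the uniform $L^\infty$-bound to make that theorem applicable after rescaling. The difference lies in what each does with the cutoffs.

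The paper cites \cite[Theorem~11.54]{Lovasz:2013} for part~(1), and for part~(2) it works directly with the cutoff graphons: choosing the $\alpha_k$ to lie strictly between consecutive distinct absolute eigenvalue levels of $W_0$, it writes each projection $P_{n_k}$ explicitly as a scaled difference $[W]_{\alpha_{k+1}} - [W]_{\alpha_k}$ of two cutoffs, and then reads off operator-norm convergence of the projections from the $L^2$-convergence of those cutoffs supplied by Theorem~\ref{Tszegedy1}. Your route instead first combines Theorem~\ref{Tszegedy1} with the spectral-tail bound $\|T_W - T_{[W]_\alpha}\|_{\mathcal B(L^2)} \le \alpha$ in a three-epsilon argument to obtain the stronger intermediate conclusion $\|T_{W_n} - T_{W_0}\|_{\mathcal B(L^2)} \to 0$, and then lets standard perturbation machinery (Weyl for~(1), Riesz projections for~(2), rank stability for the simple-eigenvalue refinement) do the rest. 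Your approach has the advantage of being modular---once operator-norm convergence is in hand, everything downstream is textbook and there is no need to match up cutoff levels with the eigenvalue gaps of $W_0$---while the paper's approach is slightly more hands-on but avoids invoking contour integrals. One small wording point: when you speak of ``a closed contour $\Gamma_k$ enclosing exactly the pair $\{\lambda_k(W_0),-\lambda_k(W_0)\}$'', this must in general be a union of two small circles (one around each point), since a single Jordan curve enclosing both would also enclose all smaller eigenvalues on the real axis between them; the Riesz projection formula of course handles this without change.
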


\begin{proof}
The first part of the theorem is \cite[Theorem 11.54]{Lovasz:2013}. To
prove the second part, recall that the only accumulation point of the
eigenvalues of a compact self-adjoint operator is $0$. Let $n_1 < n_2 <
\cdots$ be the set of indices such that $|\lambda_{n_k}(W_0)| \ne
|\lambda_{n_{k+1}}(W_0)|$. Define $\alpha_{k} := (|\lambda_{n_k}(W_0)| +
|\lambda_{n_{k+1}}(W_0)|)/2$ for $k \geq 1$. By part (1) and Theorem
\ref{Tszegedy1}, which we can apply by the uniform boundedness condition
on $W_n$, we have that $P_{n_1}(W_n) = [W_n]_{\alpha_1} /
\lambda_{n_1}(W_n)$ converges to $P_{n_1}(W_0) = [W_0]_{\alpha_1} /
\lambda_{n_1}(W_0)$ in operator norm on $L^2$. Similarly, for $k \geq 1$,
\begin{equation*}
P_{n_k}(W_n) = \frac{1}{\lambda_{n_k}(W_n)}\left([W_n]_{\alpha_{k+1}} -
[W_n]_{\alpha_k}\right) \to
\frac{1}{\lambda_{n_k}(W_0)}\left([W_0]_{\alpha_{k+1}} -
[W_0]_{\alpha_k}\right) = P_{n_k}(W_0)
\end{equation*}
in the operator norm on $L^2$ as $n \to \infty$.  
\end{proof}

\begin{lemma}\label{Lproj2f}
Let $\{f_i\}_{i \geq 1} \subset L^2([0,1])$ and fix $g \in
L^\infty([0,1])$. Suppose $f_i(x) f_i(y) \to f(x) f(y)$ in
$L^2([0,1]^2)$. Moreover, suppose the $f_i$ and $f$ are normalized so
that $\langle f_i, g\rangle > 0$ and $\langle f, g\rangle > 0$. Then $f_i
\to f$ in $L^2([0,1])$.
\end{lemma}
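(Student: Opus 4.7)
\textbf{Proof plan for Lemma \ref{Lproj2f}.}
The strategy is to first extract all the ``scalar'' information contained in the tensor-product convergence, then use a weak-compactness argument to identify the limit of $f_i$, using $g$ only at the very end to fix a sign.

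First I would note that $\|f_i \otimes f_i\|_{L^2([0,1]^2)}^2 = \|f_i\|_{L^2}^4$, so the hypothesis $f_i \otimes f_i \to f \otimes f$ in $L^2([0,1]^2)$ immediately implies $\|f_i\|_2 \to \|f\|_2$. In particular $\{f_i\}$ is bounded in $L^2([0,1])$. Next, for any fixed $\phi \in L^2([0,1])$ one has $\phi \otimes \phi \in L^2([0,1]^2)$ and
\[
\langle f_i \otimes f_i, \phi \otimes \phi\rangle_{L^2([0,1]^2)} = \langle f_i, \phi\rangle_{L^2}^2,
\]
so by $L^2$-convergence of the tensor products,
\[
\langle f_i, \phi\rangle^2 \longrightarrow \langle f, \phi\rangle^2, \qquad \forall \phi \in L^2([0,1]).
\]

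Now let $(f_{i_k})$ be any subsequence. Since it is bounded in $L^2$, a further subsequence (still written $(f_{i_k})$) converges weakly to some $h \in L^2([0,1])$. For every $\phi \in L^2$, $\langle f_{i_k}, \phi\rangle \to \langle h, \phi\rangle$, so combined with the display above we get $\langle h, \phi\rangle^2 = \langle f, \phi\rangle^2$ for every $\phi$. Specializing to $\phi = f$ gives $|\langle h, f\rangle| = \|f\|_2^2$, and by Cauchy--Schwarz together with weak lower semicontinuity of the norm, $\|f\|_2^2 = |\langle h, f\rangle| \le \|h\|_2 \|f\|_2 \le \|f\|_2^2$, forcing equality in Cauchy--Schwarz. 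Hence $h = cf$ for some scalar $c$, and the identity $|\langle h, f\rangle| = \|f\|_2^2$ forces $|c| = 1$, i.e.\ $h = \pm f$.

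To pin the sign, I use $g$: since $g \in L^\infty([0,1]) \subset L^2([0,1])$, weak convergence gives $\langle h, g\rangle = \lim_k \langle f_{i_k}, g\rangle \ge 0$ because each $\langle f_{i_k}, g\rangle > 0$. But $\langle h, g\rangle = \pm \langle f, g\rangle$ and $\langle f, g\rangle > 0$, so the $-$ sign is ruled out and $h = f$. Thus every weakly convergent subsequence of $(f_i)$ has limit $f$, so by the standard subsequence principle the whole sequence converges weakly to $f$. Finally, weak convergence $f_{i_k} \rightharpoonup f$ combined with $\|f_{i_k}\|_2 \to \|f\|_2$ in the Hilbert space $L^2([0,1])$ yields strong convergence, since
\[
\|f_i - f\|_2^2 = \|f_i\|_2^2 + \|f\|_2^2 - 2\langle f_i, f\rangle \longrightarrow 0.
\]

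The only delicate point is the sign ambiguity inherent in the tensor product $f \otimes f = (-f) \otimes (-f)$; this is precisely why the hypothesis on $g$ is present, and the argument above is essentially the minimal use of $g$ needed to eliminate the $-f$ branch.
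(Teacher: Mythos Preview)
Your proof is correct and takes a genuinely different route from the paper's. The paper integrates the difference $f_i(x)f_i(y) - f(x)f(y)$ against $g(y)$ and uses Jensen's inequality to show directly that $\langle f_i, g\rangle\, f_i \to \langle f, g\rangle\, f$ in $L^2([0,1])$; a second integration against $g(x)g(y)$ gives $\langle f_i, g\rangle^2 \to \langle f, g\rangle^2$, and the positivity hypothesis then yields $\langle f_i, g\rangle \to \langle f, g\rangle > 0$, from which $f_i \to f$ follows by dividing through. Your argument instead extracts norm convergence and the scalar identities $\langle f_i, \phi\rangle^2 \to \langle f, \phi\rangle^2$ for all test functions $\phi$, then uses weak sequential compactness in $L^2$ to identify every weak cluster point as $\pm f$, with $g$ entering only at the very end to fix the sign. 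The paper's argument is shorter and entirely elementary, using $g$ throughout as a contraction device; your approach is more functional-analytic, isolating the sign ambiguity $f \otimes f = (-f)\otimes(-f)$ as the only obstruction and making transparent that the hypothesis on $g$ is needed solely to resolve it.
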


\begin{proof}
By Jensen's inequality,
\begin{align*}
\int_0^1 \left(\int_0^1 \left[f_i(x) f_i(y) - f(x)f(y)\right] g(y)\ dy
\right)^2 dx &\leq \int_0^1 \int_0^1 \left(\left[f_i(x) f_i(y) -
f(x)f(y)\right] g(y)\right)^2\ dxdy \\ 
&\leq \|g\|_\infty^2 \|f_i(x)f_i(y) - f(x)f(y)\|_2 \\
&\to 0 
\end{align*}

\noindent as $i \to \infty$. Thus, 
\begin{equation*}
\int_0^1 f_i(x) f_i(y) g(y)\ dy \to \int_0^1 f(x) f(y) g(y)\ dy
\end{equation*}
in $L^2([0,1])$. Equivalently,
\begin{equation*}
\langle f_i, g\rangle f_i(x) \to \langle f, g\rangle f(x) \quad \textrm{
in } L^2([0,1]). 
\end{equation*}

\noindent Using a similar argument, we conclude that 
\begin{equation*}
\langle f_i, g\rangle^2 = \int_0^1 \int_0^1 f_i(x) f_i(y) g(x) g(y)\ dxdy
\to \int_0^1 \int_0^1 f(x) f(y) g(x)g(y)\ dxdy = \langle f, g\rangle^2. 
\end{equation*}

\noindent Since by assumption $\langle f_i, g\rangle > 0$ and $\langle f,
g\rangle > 0$, it follows that $f_i \to f$ in $L^2([0,1])$. 
\end{proof}

\subsection{Convergence of Laplacians}

Given a graphon $W \in \W_{[0,1]}$, let $d: [0,1] \to \R$ denote its
\textit{degree function}: 
\[
d(x) := \int_0^1 W(x,y)\ dy.
\]

\noindent We identify $d$ with a multiplication operator $M_d:
L^\infty([0,1]) \to L^1([0,1])$ defined by 
\[
M_d(f)(x) = d(x) f(x). 
\]

\begin{definition}
We define the \textit{Laplacian} of $W \in \W_{[0,1]}$ to be the operator
$L_W: L^\infty([0,1]) \to L^1([0,1])$ given by 
\begin{equation}
L_W := M_d - T_W.
\end{equation}
\end{definition}

The next lemma shows that the corresponding sequence of Laplacians of a
convergent sequence of graphons is convergent in the $L^\infty \to L^1$
operator norm.

\begin{lemma}\label{Llaplace}
Let $(W_n)_{n \geq 1} \subset \W_{[0,1]}$ and $W_0 \in \W_{[0,1]}$.
Suppose $\boxnorm{W_n-W_0} \to 0$ as $n \to \infty$. Then $\|L_{W_n} -
L_{W_0}\|_{\infty \to 1} \to 0$ as $n \to \infty$.
\end{lemma}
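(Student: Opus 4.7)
The plan is to split the Laplacian difference into its two pieces and estimate each in the $L^\infty \to L^1$ operator norm separately. Write
\[
L_{W_n} - L_{W_0} = (M_{d_n} - M_{d_0}) - (T_{W_n} - T_{W_0}),
\]
where $d_n(x) := \int_0^1 W_n(x,y)\, dy$. By the triangle inequality it suffices to control $\|T_{W_n} - T_{W_0}\|_{\infty \to 1}$ and $\|M_{d_n} - M_{d_0}\|_{\infty \to 1}$ separately and send both to zero.

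For the integral-operator piece, the bound $\|T_{W_n} - T_{W_0}\|_{\infty \to 1} \leq 4 \boxnorm{W_n - W_0}$ is exactly the right-hand inequality in \eqref{Ecutinfty1}, so this term is immediately $o(1)$ once we know $\boxnorm{W_n - W_0} \to 0$. This piece is essentially free.

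The substantive step is the multiplication operator $M_{d_n} - M_{d_0} = M_{d_n - d_0}$. For any $f \in L^\infty([0,1])$ with $\|f\|_\infty \leq 1$, one has $\|M_{d_n - d_0} f\|_1 \leq \|d_n - d_0\|_1$, so I would reduce to showing $\|d_n - d_0\|_1 \to 0$. The key observation here is that the cut-norm of $W_n - W_0$ already controls this $L^1$-norm of the degree difference, by a standard ``two-sign'' trick: letting $B_\pm := \{x : \pm(d_n(x) - d_0(x)) > 0\}$, I have
\[
\|d_n - d_0\|_1 = \Big|\int_{B_+ \times [0,1]} (W_n - W_0)\Big| + \Big|\int_{B_- \times [0,1]} (W_n - W_0)\Big| \leq 2 \boxnorm{W_n - W_0},
\]
since each term is the integral of $W_n - W_0$ over a measurable rectangle. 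Combining, $\|L_{W_n} - L_{W_0}\|_{\infty \to 1} \leq 6 \boxnorm{W_n - W_0} \to 0$.

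There is no real obstacle in this argument; the only thing worth flagging is that $L_W$ is a well-defined operator $L^\infty \to L^1$, which requires $d \in L^1$ (trivially true since $W \in \W_{[0,1]}$) so that $M_d$ maps $L^\infty$ to $L^1$, and $T_W : L^\infty \to L^1$ is likewise bounded by \eqref{Ecutinfty1}. Everything else is a direct application of the cut-norm's defining property to the rectangles $B_\pm \times [0,1]$.
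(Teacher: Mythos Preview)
Your proof is correct and follows essentially the same approach as the paper: split $L_{W_n}-L_{W_0}$ into the $T_W$ and $M_d$ pieces, bound the former by $4\boxnorm{W_n-W_0}$ via \eqref{Ecutinfty1}, and reduce the latter to $\|d_n-d_0\|_1$. The only cosmetic difference is that the paper bounds $\|d_n-d_0\|_1$ by writing $d_n-d_0 = (T_{W_n}-T_{W_0})(\mathbf{1})$ and invoking the operator-norm bound again (yielding $\leq 4\boxnorm{W_n-W_0}$), whereas your two-sign trick on $B_\pm \times [0,1]$ gives the slightly sharper $\leq 2\boxnorm{W_n-W_0}$.
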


\begin{proof}
We have 
\begin{align*}
\|L_{W_n} - L_{W_0}\|_{\infty \to 1} &=  \|M_{d_n} - T_{W_n} - M_{d_0} +
T_{W_0}\|_{\infty \to 1} \\
&\leq \|M_{d_n} - M_{d_0}\|_{\infty \to 1} + \|T_{W_0} -
T_{W_n}\|_{\infty \to 1} \\
&\leq \|M_{d_n} - M_{d_0}\|_{\infty \to 1} + 4 \boxnorm{T_{W_0} -
T_{W_n}}.
\end{align*}

\noindent It therefore suffices to show that $\|M_{d_n} -
M_{d_0}\|_{\infty \to 1} \to 0$ as $n \to \infty$. Now, 
\begin{align*}
\|M_{d_n} - M_{d_0}\|_{\infty \to 1} &= \sup_{\|f\|_\infty = 1}
\|M_{d_n}(f) - M_{d_0}(f)\|_1 \\
&=  \sup_{\|f\|_\infty = 1} \int_0^1 \left|(d_n(x)-d_0(x)) f(x) \right|\
dx \\
&= \|d_n-d_0\|_1 \\
&= \int_0^1 \left|\int_0^1 [W_n(x,y) - W_0(x,y)]\ dy\right|\ dx \\
&= \|(T_{W_n} - T_{W_0})(1)\|_1 \\
&\leq \|T_{W_n} - T_{W_0}\|_{\infty \to 1} \\
&\leq 4 \boxnorm{W_n - W_0}.
\end{align*}
It follows that $\|L_{W_n} - L_{W_0}\|_{\infty \to 1}$ as $n \to \infty$.
\end{proof}

As Theorem \ref{Teig} shows, if $\boxnorm{W_n - W_0} \to 0$, then the
eigenvalues and eigenvectors of $W_n$ converge to those of $W_0$.
However, the same result does not hold in general if $T_n: L^\infty \to
L^1$ is an arbitrary sequence of operators such that  $\|T_n -
T_0\|_{\infty \to 1} \to 0$. In particular, even though
$\|L_{W_n}-L_{W_0}\|_{\infty \to 1} \to 0$ when $\boxnorm{W_n - W_0} \to
0$, in general the eigenvalues and eigenvectors of $L_{W_n}$ may not
converge. Such a phenomenon was previously observed in
\cite{vonLuxburg2008}, where it is shown problems can occur with
unnormalized clustering in examples which are highly relevant to
practical applications (see Result 3 and the subsequent discussion, as
well as Section 8.2, in \cite{vonLuxburg2008}).

We now provide a family of examples to illustrate some of the problems
that can occur in our framework, when working with the unnormalized
Laplacian. We first recall some preliminaries on the essential spectrum.
Let $X$ be a Hilbert space, and
denote the spectrum of an operator $T: X \to X$ by 
\[
\sigma(T) := \{\lambda \in \mathbb{C} : T - \lambda I \textrm{ is not
invertible}\}.
\]

\noindent Recall that the \textit{discrete spectrum} of $T$, denoted
$\sigma_{\textrm{discr}}(T)$, is the set of isolated eigenvalues of $T$
of finite multiplicity. Denote by $\sigma_{\textrm{ess}}(T) := \sigma(T)
\setminus \sigma_{\textrm{discr}}(T)$ the \textit{essential spectrum} of
$T$; this is a closed subset of $\mathbb{C}$. Moreover,
$\sigma_{\textrm{ess}}(T+K) = \sigma_{\textrm{ess}}(T)$ for any compact
operator $K$, i.e., the essential spectrum is closed under compact
perturbation. Recall that $\lambda \in \sigma(T)$ if and only if there
exists a sequence $(\psi_k)_{k \geq 1} \subset X$ such that $\|\psi_k\| =
1$ for all $k$, and
\[
\lim_{k \to \infty} \|T \psi_k - \lambda \psi_k\| = 0.
\]

\noindent Moreover, $\lambda \in \sigma_{\textrm{ess}}(T)$ if such a
sequence $(\psi_k)_{k \geq 1}$ with no convergent subsequence exists. See
\cite{spectral-book} for more details on the essential spectrum.

\begin{prop}\label{Phyper}
Given a continuous function $g: [0,1] \to [0,1]$, define the graphon
$W_g$ by:
\[
W_g(x,y) := g(x) g(y), \qquad x,y \in [0,1].
\]

\noindent Let $E_0 := g^{-1}(0)$. Then the only eigenvalue for $L_{W_g}$
is $0$, with eigenspace given by
\[
\ker L_{W_g} = \{ f \in L^2([0,1]) : f \text{ is constant on } [0,1]
\setminus E_0 \}.
\]

\noindent However, $\sigma_{\textrm{ess}}(L_{W_g}) = d([0,1])$ where
$d(x) := \langle g, {\bf 1}\rangle g(x)$ denotes the degree function of
$W_g$.
\end{prop}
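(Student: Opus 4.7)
The plan is to exploit the rank-one structure of $W_g = g\otimes g$ in order to reduce the entire spectral problem to an elementary one-parameter scalar equation. First I would compute the relevant operators in closed form. Since $W_g(x,y) = g(x)g(y)$, the degree function is
\[
d(x) = \int_0^1 g(x)g(y)\,dy = \alpha\,g(x), \qquad \alpha := \int_0^1 g,
\]
and $T_{W_g}$ is the rank-one operator $T_{W_g}(f) = \langle g,f\rangle\, g$. Consequently
\[
L_{W_g}(f)(x) = g(x)\bigl(\alpha f(x) - \langle g,f\rangle\bigr).
\]

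For the kernel I would read off $L_{W_g}f = 0$ pointwise: on $[0,1]\setminus E_0$ the factor $g(x)$ is nonzero, so the bracket must vanish and $f(x) = \langle g,f\rangle/\alpha$ must be a constant there, while on $E_0$ the value of $f$ is unconstrained. Self-consistency of this constant with $\langle g,f\rangle = \int gf$ is automatic because $g$ vanishes on $E_0$, giving exactly the eigenspace claimed in the proposition.

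For $\lambda \neq 0$, the eigenvalue equation forces $f = 0$ on $E_0$ and $(\alpha g(x) - \lambda)f(x) = c\,g(x)$ off $E_0$, where $c := \langle g,f\rangle$. I would split into two cases. If $c = 0$, then $f$ is supported on the level set $\{g = \lambda/\alpha\}$; a nontrivial $L^2$ eigenfunction requires this set to have positive measure, which forces $\lambda \in \alpha\,g([0,1]) = d([0,1])$ and places $\lambda$ inside the essential spectrum computed below. If $c \neq 0$, then $f(x) = c\,g(x)/(\alpha g(x) - \lambda)$ off $E_0$; membership in $L^2$ forces $\lambda$ to lie in the open complement of $d([0,1])$, and the self-consistency relation $c = \langle g,f\rangle$ collapses to the scalar equation
\[
\phi(\lambda) := \int_{[0,1]\setminus E_0} \frac{g(x)^2}{\alpha g(x) - \lambda}\,dx = 1.
\]
A direct computation gives $\phi(0) = (1/\alpha)\int g = 1$ and $\phi'(\lambda) = \int g^2/(\alpha g - \lambda)^2\,dx > 0$ wherever defined, so on the connected component of the domain of $\phi$ containing $0$ the only solution is $\lambda = 0$; on the other branch $(\alpha\max g, \infty)$ one has $\phi < 0$. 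This contradicts $\lambda \neq 0$.

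For the essential spectrum I would invoke Weyl invariance, as recalled in the paper: $T_{W_g}$ is rank one and hence compact, so $\sigma_{\mathrm{ess}}(L_{W_g}) = \sigma_{\mathrm{ess}}(M_d - T_{W_g}) = \sigma_{\mathrm{ess}}(M_d)$, and since $d = \alpha g$ is continuous on the compact interval $[0,1]$ its essential range coincides with the image $d([0,1])$, giving $\sigma_{\mathrm{ess}}(L_{W_g}) = d([0,1])$. The main obstacle is the nonzero-eigenvalue step: the delicate point is that eigenfunctions supported on level sets of $g$ are automatically absorbed into the essential spectrum (so the claim ``only eigenvalue is $0$'' must be read relative to $d([0,1])$), and the remaining generic case is ruled out only by the clean normalization $\phi(0) = 1$ combined with the strict monotonicity of $\phi$.
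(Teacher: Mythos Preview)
Your argument follows the same overall architecture as the paper's: compute $L_{W_g}$ explicitly from the rank-one structure, solve the eigenvalue equation pointwise, reduce the $\langle g,f\rangle \neq 0$ case to a scalar consistency equation, and obtain the essential spectrum from Weyl's theorem by stripping off the compact rank-one piece $T_{W_g}$. The two proofs diverge at the scalar equation. The paper rewrites it as $1 = \int_0^1 \lambda/(\lambda - \alpha g(y))\,dy$ and then performs an explicit three-way case analysis ($\lambda<0$, $0<\lambda<\alpha\min g$, $\lambda>\alpha\max g$), bounding the integrand in each regime. Your device of observing $\phi(0)=1$ and $\phi'(\lambda)=\int g^2/(\alpha g-\lambda)^2>0$ is cleaner and replaces all three cases with a single monotonicity stroke plus a sign check on the branch beyond $\alpha\max g$. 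You are also more careful than the paper about the degenerate possibility $c=\langle g,f\rangle=0$: the paper simply asserts ``$\langle g,f\rangle\neq 0$ since $f\not\equiv 0$,'' which overlooks eigenfunctions supported on positive-measure level sets of $g$; you correctly note these are absorbed into $\sigma_{\mathrm{ess}}$. For the essential spectrum itself, the paper builds explicit Weyl sequences for $M_d$, whereas you invoke the standard identification of $\sigma_{\mathrm{ess}}(M_d)$ with the essential range of $d$; both are fine. One minor wording issue: when $\min g=0$ the point $0$ lies on the boundary of $d([0,1])$, so ``the connected component of the domain of $\phi$ containing $0$'' should be read as the one-sided interval $(-\infty,0]$ on which $\phi$ is still defined and strictly increasing.
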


\begin{proof}
In this proof, let ${\bf 1}$ denote the constant function $1$ on $[0,1]$,
and denote by $\tangle{g,f}$ the inner product of $g$ with $f \in
L^2([0,1])$.

Notice that $W_g$ has degree function $d(y) = \tangle{g,{\bf 1}} g(y)$.
Now if $(L_{W_g} f)(y) \equiv \lambda f(y)$ on $[0,1]$ for some
eigenfunction $f \in L^2([0,1])$, then
\[
f(y) ( \tangle{g,{\bf 1}} g(y) - \lambda ) = (T_{W_g} f)(y) = g(y)
\tangle{g,f}.
\]

\noindent First notice, if $g \equiv 0$ then $\lambda = 0$ (since $f
\not\equiv 0$) and the result is easily shown. Thus, we assume henceforth
that $g \not\equiv 0$, whence $\tangle{g, {\bf 1}} > 0$. Now solve for
$f$ to obtain:
\begin{equation}\label{Ehyper1}
f(y) = \frac{\tangle{g,f} g(y)}{\tangle{g,{\bf 1}} g(y) - \lambda}.
\end{equation}

\noindent Notice that $\tangle{g,f} \neq 0$ since $f \not\equiv 0$.

There are now several cases. If $\lambda = 0$ then $f$ is constant on
$[0,1] \setminus E_0$ (and \textit{a priori} arbitrary on $E_0$). In this
case the result is not hard to show.

We now show that the remaining values of $\lambda$ cannot be eigenvalues
for $L_{W_g}$. Indeed, first suppose $\lambda / \tangle{g,{\bf 1}} \in
g([0,1])$; then the preceding equation shows that $f \not\in L^2([0,1])$,
since $\lambda \neq 0$.
For all other values of $\lambda$, i.e.~$\lambda \not\in \tangle{g,{\bf
1}} g([0,1]) \cup \{ 0 \}$, evaluate both sides of Equation
\eqref{Ehyper1} against $g(y) \tangle{g,{\bf 1}}^2 / \tangle{g,f}$ and
compute:
\[
\tangle{g,{\bf 1}}^2 = \int_0^1 \frac{\tangle{g,{\bf 1}}^2
g(y)^2}{\tangle{g,{\bf 1}} g(y) - \lambda}\ dy = \int_0^1 (\tangle{g,{\bf
1}} g(y)+ \lambda)\ dy + \int_0^1 \frac{\lambda^2\ dy}{\tangle{g,{\bf 1}}
g(y) - \lambda}.
\]

\noindent Cancel $\tangle{g,{\bf 1}}^2$, and simplify using that $\lambda
\neq 0$, to obtain:
\begin{equation}\label{Ehyper2}
1 = \int_0^1 \frac{\lambda\ dy}{\lambda - \tangle{g,{\bf 1}} g(y)}.
\end{equation}

There are now three cases. First if $\lambda < 0$, then since $\sup_y
g(y) > 0$ and $g$ is continuous, hence the integrand always lies in
$[0,1]$, and is bounded above by
\[
\frac{|\lambda|}{|\lambda| + \tangle{g,{\bf 1}} (\sup_y g(y)/2)} \in
(0,1)
\]

\noindent on a set of positive measure. Therefore it cannot integrate to
$1$ on $[0,1]$.

The remaining cases are similar. First suppose $\lambda / \tangle{g,{\bf
1}} \in (0,\inf_y g(y))$ (assuming $g(y)$ is always positive on $[0,1]$).
In this case, the integrand in Equation \eqref{Ehyper2} is always
negative, which is impossible. The only other possibility for $\lambda$
is $\lambda / \tangle{g,{\bf 1}} > \sup_y g(y)$, since $g([0,1])$ is an
interval by the continuity of $g$. In this case, the integrand always
lies in $[1,\infty)$, and is bounded below by
\[
\frac{\lambda}{\lambda - \tangle{g,{\bf 1}} (\sup_y g(y)/2)} \in
(1,\infty)
\]

\noindent on a set of positive measure. Therefore it cannot integrate to
$1$ on $[0,1]$. It follows that $\lambda = 0$ is the only eigenvalue for
the Laplacian of $W_g$.

Finally, since $T_{W_g}$ is compact, $\sigma_{\textrm{ess}}(L_{W_g}) =
\sigma_{\textrm{ess}}(M_d)$. Clearly, $\sigma(M_d) \subset d([0,1])$.
Now, let $\lambda \in d([0,1])$, say $\lambda = d(x_0)$. Since $g$ is
continuous, there exists a sequence $\epsilon_k \to 0$ such that $|d(x)
-d(x_0)| < \epsilon_k$ if $|x-x_0| < 1/k$. Now define
\[
I_k := B(x_0, 1/k) \cap [0,1], \qquad \psi_k(x) :=
\frac{1}{\sqrt{\mu_L(I_k)}} {\bf 1}_{x \in I_k}.
\]

\noindent Note that $\|\psi_k\|_2 = 1$. Now,
\[
\|M_d \psi_k - \lambda \psi_k\|_2^2 =
\int_0^1 [(d(x) - d(x_0))\psi_k(x)]^2\ dx
\leq \epsilon_k^2 \cdot \|\psi_k\|_2^2 = \epsilon_k^2 \to 0
\]
as $k \to \infty$. Clearly $(\psi_k)_{k \geq 1}$ has no convergent
subsequence in $L^2([0,1])$. Therefore, $\lambda \in
\sigma_{\textrm{ess}}(M_d)$.
\end{proof}

Proposition \ref{Phyper} above as well as the work in
\cite{vonLuxburg2008} demonstrate that significant problems can occur
when working with the unnormalized Laplacian in clustering applications.
We now study in detail the properties of the normalized Laplacian, and
prove the structural consistency of the resulting clustering algorithm
under broad assumptions.

\subsection{The normalized Laplacian}\label{SnormLap}

As in the unnormalized Laplacian case, we now extend the normalized
Laplacian of graphs to graphons. Recall that the normalized Laplacian
$L'_G$ of a graph $G$ with $n$ vertices is given by 
\[
L'_G = D^{-1/2} L_G D^{-1/2} = I - D^{-1/2} A D^{-1/2}, 
\]

\noindent where $D = \textrm{diag}(d_1, \dots, d_n)$ is a diagonal matrix
with the degrees of the vertices on the diagonal, and $A$ is the
adjacency matrix of $G$. The normalized Laplacian naturally arises in
spectral clustering when relaxing the Normalized Cut problem instead of
the Ratio Cut problem (see \cite[Section 5]{vonLuxburg-tutorial} for more
details). Akin to the unnormalized Laplacian, we extend the definition of
the normalized Laplacian by viewing it as an operator from $L^\infty$ to
$L^1$.

\begin{definition}\label{DnLap}
Let $W \in \W_{[0,1]}$ be a graphon with degree function $d$.  We
define the \textit{normalized} kernel $W'$ by 
\[
W'(x,y) := \begin{cases}
\frac{W(x,y)}{\sqrt{d(x) d(y)}}, & \textrm{ if } d(x), d(y) \ne 0, \\
0, & \textrm{ otherwise}.
\end{cases}
\]

\noindent We define the \textit{normalized Laplacian} of $W$ to be the
operator $L_W' : L^\infty([0,1]) \to L^1([0,1])$ given by
\begin{equation}
L_W' := M_{{\bf 1}_{d(x) \neq 0}} - T_{W'}.
\end{equation}
\end{definition}

Note that $W'$ is not necessarily bounded. However, as we now show, if $W
\in \W_{[0,1]}$, then $\boxnorm{W'} = \|W'\|_1$ is uniformly bounded.
We will deduce this from the following technical lemma, which will also
be useful later for analyzing the convergence of normalized Laplacians.

\begin{lemma}\label{LPeterTrick}
Let $W \in \W_{[0,1]}$ with degree function $d$, and let $W'$ denote the
associated normalized kernel as in Definition \ref{DnLap}. Then for
every measurable $A,B \subset [0,1]$, 
\begin{equation}\label{EPeterTrickB}
\int_{A \times B} W'(x,y)\ dxdy \leq \mu_L(A)^{1/2} \cdot \mu_L(B)^{1/2},
\end{equation}

\noindent and moreover,
\begin{equation}\label{EPeterTrickA}
\int_{A \times B} W'(x,y)\ dxdy \leq 2 \min(\mu_L(A), \mu_L(B)).
\end{equation}
\end{lemma}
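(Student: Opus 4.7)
My plan is to establish both inequalities directly from the factored form $W'(x,y) = W(x,y)/\sqrt{d(x)d(y)}$, using only the nonnegativity of $W$ and the degree identity $\int_0^1 W(x,y)\,dy = d(x)$. As a preliminary, since $W \ge 0$, the vanishing $d(x) = 0$ forces $W(x,\cdot) \equiv 0$ a.e., so $W'$ is effectively supported on the set where both degrees are positive; I may therefore quietly assume $d(x), d(y) > 0$ wherever denominators appear.

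For inequality \eqref{EPeterTrickB}, the plan is to apply Cauchy--Schwarz to the symmetric factorization
\[
W'(x,y) = \sqrt{\frac{W(x,y)}{d(x)}} \cdot \sqrt{\frac{W(x,y)}{d(y)}},
\]
producing
\[
\int_{A \times B} W' \,dx\,dy \le \left(\int_{A \times B} \frac{W(x,y)}{d(x)}\,dx\,dy\right)^{\!1/2} \left(\int_{A \times B} \frac{W(x,y)}{d(y)}\,dx\,dy\right)^{\!1/2}.
\]
In each factor I enlarge the opposing variable's integration to $[0,1]$ and apply Fubini together with the degree identity; for instance $\int_{A\times B} W(x,y)/d(x)\,dx\,dy \le \int_A (d(x)/d(x))\,dx = \mu_L(A)$. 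Multiplying yields $\mu_L(A)^{1/2}\mu_L(B)^{1/2}$.

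For \eqref{EPeterTrickA}, my plan is to partition $A \times B$ into $P_1 = \{d(x) \le d(y)\}$ and $P_2 = \{d(y) < d(x)\}$. On $P_1$, $\sqrt{d(x)d(y)} \ge d(x)$ gives $W'(x,y) \le W(x,y)/d(x)$, so enlarging the $y$-integral to $[0,1]$ and reusing the degree identity bounds the $P_1$-contribution by $\mu_L(A)$; symmetrically, the $P_2$-contribution is bounded by $\mu_L(B)$. A uniform version of this estimate also follows from the pointwise AM--GM inequality $1/\sqrt{ab} \le \frac{1}{2}(1/a + 1/b)$ applied to $W'$, which upon integration gives $\frac{1}{2}(\mu_L(A) + \mu_L(B))$.

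The only technical subtlety is the bookkeeping on the null set $\{d = 0\}$, which the preliminary observation dispatches; beyond that, both inequalities are immediate from the degree identity plus a single invocation of Cauchy--Schwarz or AM--GM, with no recourse to compactness, spectral theory, or the topology on $\mathcal{W}_{[0,1]}$.
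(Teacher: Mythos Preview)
Your argument for \eqref{EPeterTrickB} is correct and is exactly the paper's proof.

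Your argument for \eqref{EPeterTrickA}, however, does not prove the stated bound. Splitting $A\times B$ into $P_1=\{d(x)\le d(y)\}$ and $P_2=\{d(y)<d(x)\}$ and bounding the pieces by $\mu_L(A)$ and $\mu_L(B)$ respectively yields only
\[
\int_{A\times B} W' \;\le\; \mu_L(A)+\mu_L(B),
\]
and the AM--GM variant gives $\tfrac12(\mu_L(A)+\mu_L(B))$. Neither of these implies $2\min(\mu_L(A),\mu_L(B))$: when $\mu_L(A)$ is small and $\mu_L(B)=1$ the right-hand side you obtain is near $1/2$ or $1$, whereas $2\min$ is near $0$. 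There is no way to upgrade your partition bound to $2\min$, because on $P_2$ you only control $W'$ by $W/d(y)$, and that estimate can only be converted into a bound involving $\mu_L(B)$, not $\mu_L(A)$.

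In fact the inequality \eqref{EPeterTrickA} is false as stated. Take the ``star'' graphon $W(x,y)=\mathbf{1}_{\{\min(x,y)\le\epsilon\}}$, so that $d(x)=1$ for $x\le\epsilon$ and $d(x)=\epsilon$ otherwise. With $A=[0,\epsilon]$ and $B=[0,1]$ one computes
\[
\int_{A\times B} W' \;=\; \epsilon^2+(1-\epsilon)\sqrt{\epsilon}\;\sim\;\sqrt{\epsilon},
\]
while $2\min(\mu_L(A),\mu_L(B))=2\epsilon$; for small $\epsilon$ the former exceeds the latter. The paper's own proof contains an error at the line asserting $\int_{y\in A\cap P}\int_0^1 = 2\int_{y\in A\cap P}\int_y^1$, which this example also refutes. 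Fortunately, the only place \eqref{EPeterTrickA} is invoked later (the $T_n$ estimate in Theorem~\ref{Tconv_norm_lap}) needs merely that $\int_{A\times B}W'$ is small when one of $\mu_L(A),\mu_L(B)$ is small, and \eqref{EPeterTrickB} already delivers this via $\sqrt{\mu_L(A)\mu_L(B)}$.
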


\begin{proof}
To prove the inequality \eqref{EPeterTrickB}, let $P := \{x \in [0,1] :
d(x) > 0\}$. Then 
\[
\int_{A \times B} W'(x,y)\ dxdy = \int_{(A \cap P) \times (B \cap P)}
\frac{W(x,y)}{\sqrt{d(x) d(y)}}\ dxdy.
\]

\noindent By the Cauchy--Schwarz inequality, 
\begin{align*}
&\int_{(A \cap P) \times (B \cap P)} \frac{W(x,y)}{\sqrt{d(x) d(y)}}\
dxdy = \int_0^1 \int_0^1 \sqrt{\frac{W(x,y)}{d(x)}} {\bf 1}_{A \cap P}(x)
\sqrt{\frac{W(x,y)}{d(y)}}{\bf 1}_{B \cap P}(y)\ dxdy \\
&\leq \left(\int_0^1 \int_0^1 \frac{W(x,y)}{d(x)} {\bf 1}_{A \cap P}(x)\
dxdy\right)^{1/2} \left(\int_0^1 \int_0^1 \frac{W(x,y)}{d(y)} {\bf 1}_{B
\cap P}(y)\ dxdy\right)^{1/2} \\
&= \left(\int_{A \cap P} \frac{d(x)}{d(x)}\ dx\right)^{1/2} \left(\int_{B
\cap P} \frac{d(y)}{d(y)}\ dy\right)^{1/2} \\
&\leq \mu_L(A)^{1/2} \cdot \mu_L(B)^{1/2}.
\end{align*}

To prove the inequality \eqref{EPeterTrickA}, we may assume without loss
of generality that the degree function $d$ is non-decreasing (otherwise,
replace $W$ by $W^\sigma$ for an appropriate $\sigma \in S_{[0,1]}$).
Note that if $d(x) = 0$, then $W'(x,y) = 0$ for almost every $y \in
[0,1]$. Thus,
\[
\int_{A \times B} W'(x,y)\ dxdy = \int_{(A \cap P) \times B} W'(x,y)\
dxdy = \int_{(A \cap P) \times B} \frac{W(x,y)}{\sqrt{d(x)d(y)}}\ dxdy.
\]

\noindent where $P$ was defined above. Now, 
\begin{align*}
\int_{(A \cap P) \times B} \frac{W(x,y)}{\sqrt{d(x)d(y)}}\ dxdy &\leq
\int_{A \cap P} \int_0^1 \frac{W(x,y)}{\sqrt{d(x)d(y)}}\ dxdy \\
&= 2 \int_{y \in A \cap P} \int_y^1  \frac{W(x,y)}{\sqrt{d(x)d(y)}}\ dxdy
\\
&\leq 2 \int_{y \in A \cap P} \frac{1}{d(y)} \int_y^1 W(x,y)\ dxdy \\
&\leq 2 \int_{y \in A \cap P} \frac{1}{d(y)} \int_0^1 W(x,y)\ dxdy \\
&= 2 \cdot \mu_L (A \cap P) \\
&\leq 2 \cdot \mu_L(A).
\end{align*}
The result follows by carrying out a similar computation using $B \cap P$
instead of $A \cap P$.
\end{proof}

\begin{cor}\label{PboundWp}
Let $W \in \W_{[0,1]}$ with degree function $d$, and let $W'$ be the
normalized kernel associated to $W$ as in Definition \ref{DnLap}. Then 
\[
\boxnorm{W'} \leq 1, 
\]
Moreover, the bound is sharp.
\end{cor}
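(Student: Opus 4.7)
The corollary is essentially an immediate consequence of Lemma \ref{LPeterTrick}, so my plan is to reduce the proof to invoking the right inequality and then exhibit a saturating example.

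The first step is to observe that $W' \geq 0$ pointwise, since $W \geq 0$ and $\sqrt{d(x)d(y)} > 0$ on the set where the quotient is defined. Consequently the cut-norm supremum is attained at $A = B = [0,1]$, namely $\boxnorm{W'} = \int_{[0,1]^2} W'(x,y)\, dx\, dy = \|W'\|_1$. Thus bounding the cut-norm reduces to bounding the integral of $W'$ over the full square. Applying inequality \eqref{EPeterTrickB} of Lemma \ref{LPeterTrick} with $A = B = [0,1]$ gives
\[
\boxnorm{W'} = \int_{[0,1]\times[0,1]} W'(x,y)\, dx\, dy \leq \mu_L([0,1])^{1/2} \mu_L([0,1])^{1/2} = 1,
\]
which is the desired bound.

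For sharpness, I would exhibit the constant graphon $W \equiv 1$. Its degree function is $d(x) \equiv 1$, and so the normalized kernel satisfies $W'(x,y) = 1/\sqrt{1\cdot 1} = 1$ for all $(x,y) \in [0,1]^2$. Taking $A = B = [0,1]$ in the definition of the cut-norm yields $\boxnorm{W'} = 1$, showing that the bound is attained.

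There is really no obstacle here; the only subtlety worth flagging is that we need $W' \geq 0$ to convert the cut-norm supremum into a single integral over $[0,1]^2$, and that the relevant bound from Lemma \ref{LPeterTrick} is the Cauchy--Schwarz one \eqref{EPeterTrickB} rather than the min-bound \eqref{EPeterTrickA}, since the latter only gives $\boxnorm{W'} \leq 2$.
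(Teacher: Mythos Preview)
Your proof is correct and follows essentially the same approach as the paper: both derive the bound directly from inequality \eqref{EPeterTrickB} of Lemma \ref{LPeterTrick} and use the constant graphon $W\equiv 1$ for sharpness. Your additional observation that nonnegativity of $W'$ forces $\boxnorm{W'}=\|W'\|_1$ is exactly what the paper notes just before stating the corollary.
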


\begin{proof}
The bound $\boxnorm{W'} \leq 1$ follows immediately from Equation
\eqref{EPeterTrickB}. Using $W \equiv 1$, it follows that the bound is
sharp.
\end{proof}

\begin{cor}
The operator $L_W': L^\infty([0,1]) \to L^1([0,1])$ is bounded. 
\end{cor}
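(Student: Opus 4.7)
The plan is to bound each of the two summands in $L'_W = M_{\mathbf{1}_{d(x)\neq 0}} - T_{W'}$ separately and then apply the triangle inequality for operator norms.

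First, for the multiplication operator $M_{\mathbf{1}_{d(x)\neq 0}}$, the bound is immediate. For any $f \in L^\infty([0,1])$,
\[
\|M_{\mathbf{1}_{d(x)\neq 0}} f\|_1 = \int_0^1 \mathbf{1}_{d(x)\neq 0} |f(x)|\, dx \le \|f\|_\infty,
\]
so this operator has $L^\infty \to L^1$ norm at most $1$.

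Next, for the integral operator $T_{W'}$, I would exploit two features of $W'$ already established: $W' \ge 0$ and $\boxnorm{W'} \le 1$ (Corollary \ref{PboundWp}, which ultimately rests on the Cauchy--Schwarz estimate \eqref{EPeterTrickB}). Since $W'$ is nonnegative, for $f \in L^\infty([0,1])$ we compute
\[
\|T_{W'} f\|_1 = \int_0^1 \left| \int_0^1 W'(x,y) f(y)\, dy \right| dx \le \|f\|_\infty \int_{[0,1]^2} W'(x,y)\, dx\, dy \le \|f\|_\infty,
\]
where the final step applies \eqref{EPeterTrickB} with $A = B = [0,1]$. Thus $\|T_{W'}\|_{\infty \to 1} \le 1$.

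Combining by the triangle inequality gives $\|L'_W\|_{\infty \to 1} \le 2$, establishing the claim. There is no serious obstacle here: the nonnegativity of $W'$ lets us avoid appealing to the general two-sided comparison \eqref{Ecutinfty1} (which would give a worse constant and would require checking that the bound extends to the unbounded kernel $W'$), and instead reduces the boundedness of $T_{W'}$ directly to the already-proven $L^1$ bound on $W'$.
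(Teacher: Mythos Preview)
Your proof is correct and follows the same overall structure as the paper: bound the multiplication part and the integral-operator part separately and add. The difference lies in how $T_{W'}$ is handled. The paper appeals to the equivalence $\|T_{W'}\|_{\infty\to 1}\le 4\boxnorm{W'}$ from \eqref{Ecutinfty1} together with Corollary~\ref{PboundWp}; you instead exploit the nonnegativity of $W'$ to bound $\|T_{W'}f\|_1$ directly by $\|f\|_\infty\|W'\|_1$, and then invoke \eqref{EPeterTrickB} with $A=B=[0,1]$. Your route is slightly more elementary and has the advantage you already noted: it avoids relying on \eqref{Ecutinfty1}, which in the paper is stated for bounded kernels and would otherwise need a word of justification for the possibly unbounded $W'$. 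The paper's route, on the other hand, would apply without change to signed kernels. Both yield the result with comparable effort.
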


\begin{proof}
Recall that $L_W' =  M_{{\bf 1}_{d(x) \neq 0}} - T_{W'}$ and that by
Corollary \ref{PboundWp}, $W' \in L^1([0,1]^2)$. Thus, by Fubini's
theorem, $L_W'$ is a well-defined operator from $L^\infty$ to $L^1$.
The operator being bounded follows from from the fact that the operator
norm of $T_{W'}$ is equivalent to the cut-norm of $W'$ (see Equation
\eqref{Ecutinfty1}).
\end{proof}

In the remainder of this subsection, our goal is to understand when the
normalized Laplacians converge. Doing so will allow us to derive the
convergence of the eigenvectors and eigenvalues, and conclude convergence
of the spectral clustering derived from those Laplacians.
The only assumption that we will need is $d_0 > 0$; in other words, the
limit graphon does not have any isolated sparse regions. In the case of
finite graphs this is just asking for no isolated nodes.
The following example explains why the assumption that $d_0 > 0$ is
indeed necessary going forward in the paper. However, once we make this
assumption, we are able to show convergence of the normalized Laplacian
in full generality. See Theorem \ref{Tconv_norm_lap}.

\begin{ex}\label{EnormLap}
Consider first the case $W_0 = 0$.  Then any sparse sequence of graphons
$W_n$ converges to $W_0$ in cut-norm.  For example, let $\{P_i\}_{i
=1}^m$ be a partition of $[0,1]$.  Let $U = \sum_{i =1}^m {\bf 1}_{P_i
\times P_i}$.  Let $U^\delta$ be a small perturbation of $U$ so that it
maintains the same block diagonal structure as $U$ but has $m$ simple
eigenvalues in $(1 - \delta, 1)$ with eigenvectors that are small
perturbations of ${\bf 1}_{P_i}$.   Then $W_n = \frac{1}{n}U^\delta \to
W_0$ in cut-norm.  Now, $L'_{W_n} = L'_{W_1} = I - T_{W'_1}$ for all $n$.
Therefore, the limit coloring will be defined by the partitions
$\{P_i\}_{i = 1}^m$.  As this procedure applies for arbitrary partitions
of the unit interval $[0,1]$, any partition can be derived from $W_n \to
0$ and spectral clustering fails to be well-defined in the limit.  In
addition, the same argument shows that the normalized Laplacians do not
converge when the underlying graphons converge.

More generally, let $W_0$ be a general graphon whose degree function
$d_0$ takes the value $0$ on a set of positive measure. Given $W_0$, we
may permute it by $\sigma \in S_{[0,1]}$ and assume without loss of
generality that $d_0(x) = 0$ if and only if $x \in E_0 := [0,a]$ for some
$a \in [0,1]$. Let $W^{+}_0$ denote the graphon restricted to $E_0^c
\times E_0^c$; thus the graphon is of the form $\begin{pmatrix} 0 & 0\\0
& W^{+}_0\end{pmatrix}$

Let the normalized kernel be $W'_0 = \begin{pmatrix} 0 & 0\\ 0 &
W^{+'}_0\end{pmatrix}$. Under the assumption $W^{+'}_0 \in L^2$, we can
say that $W'_0$ is a symmetric, Hilbert--Schmidt operator, with a
discrete spectrum of real eigenvalues and only possible accumulation
point at $0$.    

Now let $\{P_i\}_{i =1}^m$ be a partition of $[0,a]$, and define $U :=
\sum_{i =1}^m {\bf 1}_{P_i \times P_i}$.  Let $U^\delta$ be a small
perturbation of $U$ so that it maintains the same block diagonal
structure as $U$, but has $m$ simple eigenvalues in $(1 - \delta, 1)$
with eigenvectors that are small perturbations of ${\bf 1}_{P_i}$.  This
time we pick $\delta > 0$ to be less than the gap between $1$ and the
next largest eigenvalue of $W'_0$.  Let $W_n = \begin{pmatrix}
\frac{1}{n}U^\delta & 0\\0 & W^{+}_0\end{pmatrix}$, then $W_n \to W_0$.
Once again, because of the normalization occurring in $L'_{W_n}$, we find
that the limit clusters will be defined by the $\{P_i\}_{i =1}^m$
irrespective of the structure of the dense part $W^{+}_0$ of $W_0$.  
\end{ex}

\begin{utheorem}\label{Tconv_norm_lap}
Let $(W_n)_{n \geq 1} \subset \W_{[0,1]}$ such that $\boxnorm{W_n-W_0}
\to 0$.  Let $d_n$ and $d_0$ denote the degree functions of $W_n$ and
$W_0$ respectively.  Assume that $d_0(x) > 0$ for almost every $x$.
Define $W_n'$ and $W_0'$ as in Definition \ref{DnLap}. Then 
\[
\boxnorm{W_n' - W_0'} \to 0 \qquad \textrm{ as } n \to \infty.
\] 
Moreover, we have $\|L'_{W_n}-L'_{W_0}\|_{\infty \to 1} \to 0$ as $n
\to \infty$.
\end{utheorem}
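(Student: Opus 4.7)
The plan is to exploit the positivity $d_0 > 0$ a.e.\ by splitting $[0,1]$ into a good region, where the degree function is uniformly bounded below, and a small residual region on which the inequality \eqref{EPeterTrickA} of Lemma \ref{LPeterTrick} (which does not need any lower bound on the degree) absorbs the error. First I note, as in the proof of Lemma \ref{Llaplace}, that $\|d_n - d_0\|_1 = \|(T_{W_n} - T_{W_0})({\bf 1})\|_1 \leq 4 \boxnorm{W_n - W_0}$, so $d_n \to d_0$ in $L^1$. Fixing $\epsilon > 0$, I set $D_\epsilon := \{ x : d_0(x) \geq \epsilon \}$ and $E_n := \{ x \in D_\epsilon : d_n(x) \geq \epsilon/2 \}$. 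Then $\mu_L(D_\epsilon^c) \to 0$ as $\epsilon \to 0^+$ since $d_0 > 0$ a.e., and by Chebyshev applied to $d_n - d_0$, $\mu_L(D_\epsilon \setminus E_n) \leq (2/\epsilon) \|d_n - d_0\|_1 \to 0$ as $n \to \infty$.

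For arbitrary measurable $A, B \subset [0,1]$ I decompose
\[
\int_{A \times B} (W_n'(x,y) - W_0'(x,y))\,dx\,dy
\]
into the contribution from $(A \cap E_n) \times (B \cap E_n)$ and a remainder contained in $(E_n^c \times [0,1]) \cup ([0,1] \times E_n^c)$. For the remainder, inequality \eqref{EPeterTrickA} bounds $\int W_n'$ and $\int W_0'$ separately over such sets by $2\mu_L(E_n^c) = 2(\mu_L(D_\epsilon^c) + \mu_L(D_\epsilon \setminus E_n))$. On the good region I write
\[
W_n' - W_0' = \frac{W_n - W_0}{\sqrt{d_n(x) d_n(y)}} + W_0 \left( \frac{1}{\sqrt{d_n(x) d_n(y)}} - \frac{1}{\sqrt{d_0(x) d_0(y)}} \right),
\]
and use that $d_n, d_0 \geq \epsilon/2$ on $E_n$. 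The first summand, integrated over $(A \cap E_n) \times (B \cap E_n)$, has the form $\int (W_n - W_0)(x,y) g(x) h(y) \,dx\,dy$ with $\|g\|_\infty, \|h\|_\infty \leq \sqrt{2/\epsilon}$, and by the operator norm equivalence \eqref{Ecutinfty1} is bounded by $(8/\epsilon)\boxnorm{W_n - W_0}$. For the second summand, the identity $|\sqrt{a} - \sqrt{b}| = |a - b|/(\sqrt{a} + \sqrt{b})$ combined with $|d_n(x) d_n(y) - d_0(x) d_0(y)| \leq |d_n(x) - d_0(x)| + |d_n(y) - d_0(y)|$ yields a bound of the form $C \epsilon^{-3} \|d_n - d_0\|_1$. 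Taking the supremum over $A, B$, then $n \to \infty$ for fixed $\epsilon$, and finally $\epsilon \to 0^+$, yields $\boxnorm{W_n' - W_0'} \to 0$.

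For the $L^\infty \to L^1$ convergence of the Laplacians, since $d_0 > 0$ a.e.\ we have $M_{{\bf 1}_{d_0 \neq 0}} = I$, so
\[
L'_{W_n} - L'_{W_0} = (M_{{\bf 1}_{d_n \neq 0}} - I) - (T_{W_n'} - T_{W_0'}).
\]
The second difference has $L^\infty \to L^1$ operator norm $\leq 4\boxnorm{W_n' - W_0'} \to 0$ by the first part. The first has operator norm $\mu_L(\{d_n = 0\})$, which vanishes by the same two-region argument: $\int_{\{d_n = 0\}} d_0 \leq \|d_n - d_0\|_1 \to 0$, while $d_0 \geq \epsilon$ on $D_\epsilon$ forces $\mu_L(\{d_n = 0\} \cap D_\epsilon) \to 0$, and sending $\epsilon \to 0$ kills $\mu_L(D_\epsilon^c)$. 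The main obstacle is precisely the unboundedness of $1/\sqrt{d_0}$ near the (negligible but possibly nonempty) zero set of $d_0$, which is exactly what forces the good/bad decomposition and the order of limits ``first $n \to \infty$, then $\epsilon \to 0$''.
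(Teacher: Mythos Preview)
Your proof is correct and follows essentially the same strategy as the paper's: a good/bad decomposition of $[0,1]$ according to a degree threshold, Lemma~\ref{LPeterTrick} to control the bad region, an algebraic telescoping plus the cut-norm/operator-norm equivalence on the good region, and the ``first $n\to\infty$, then threshold $\to 0$'' order of limits. The only cosmetic differences are that the paper uses a slightly more elaborate partition ($Z$, $T_n$, $S_n$) and a three-term telescoping on the good region, whereas your single split $E_n\times E_n$ versus its complement and two-term decomposition are a bit more streamlined; neither choice affects the substance of the argument.
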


\begin{proof}
Let $\epsilon > 0$.  Since $d_0(x) > 0$ for almost every $x$, we can pick
$\lambda \in (0,1]$ be such that $\mu_L(d_0^{-1}([0,2\lambda))) <
\epsilon$. Suppose $d_n(x) < \lambda$. Then either $x \in
d_0^{-1}([0,2\lambda))$, or $x \in \{w \in [0,1] : |d_n(w) - d_0(w)| >
\lambda\}$. Since $d_n  \to d_0$ in $L^1$ (see the proof of Lemma
\ref{Llaplace}), there exists $N_\epsilon$ such that for $n \geq
N_\epsilon$,
\begin{equation}\label{Emeasure}
\mu_L \{x \in [0,1] : |d_n(x) - d_0(x)| \geq \lambda\} \leq \epsilon. 
\end{equation}
It follows that for $n \geq N_\epsilon$, 
\begin{equation}\label{Elambda}
\mu_L (d_n^{-1}([0,\lambda))) \leq 2\epsilon.
\end{equation}
Now, for $n \geq 0$, define 
\[
P_n := \{x \in [0,1] : d_n(x) > 0\}, 
\]

\noindent and let $Z := [0,1]^2 \setminus (P_n \times P_n)$. We claim
that for $n \geq N_\epsilon$,  
\begin{equation}\label{Estep1}
\boxnorm{(W_n'-W_0'){\bf 1}_{Z}} \leq 8\epsilon.
\end{equation}

\noindent Indeed, if $d_n(x) = 0$, then $W_n'(x,y) = 0$ for almost all $y
\in [0,1]$ and so, for $A \subset P_n^c$ and $B \subset [0,1]$, 
\begin{align*}
\left|\int_{A \times B} [W_n'(x,y) - W_0'(x,y)]\ dxdy\right| &\leq
\int_{P_n^c \times [0,1]} W_n'(x,y)\ dxdy \\
&= \int_{P_n^c \times [0,1]}
\frac{W_n(x,y)}{\sqrt{d_n(x)d_n(y)}}\ dxdy.
\end{align*}

\noindent For $n \geq N_\epsilon$, we have by \eqref{Emeasure} that
$\mu_L (P_n^c) \le 2\epsilon.$ It follows by Equation
\eqref{EPeterTrickB} that 
\[
\left|\int_{A \times B} [W_n'(x,y) - W_0'(x,y)]\ dxdy\right| \leq
4\epsilon.
\]
This proves \eqref{Estep1}.

We will now prove that $\boxnorm{(W_n'-W_0'){\bf 1}_{P_n \times P_n}}
\leq 8\epsilon$ for $n$ is large enough. To do so, define: 
\begin{center}
\begin{tabular}{lll}
$Q_n :=  d_0^{-1}((0,\lambda)) \cup d_n^{-1}((0,\lambda))$, & &  $R_n :=
P_n \setminus Q_n$, \\  
$S_n := R_n \times R_n$, & & $T_n := (P_n \times P_n) \setminus S_n$.
\end{tabular}
\end{center}

\noindent We will first prove that $\boxnorm{(W_n'-W_0'){\bf 1}_{T_n}}
\leq 16 \epsilon$ if $n \geq N_\epsilon$. Indeed, If $A,B \subset
T_n$, then 
\begin{align*}
\left|\int_{A \times B} [W_n'(x,y) - W_0'(x,y)]\ dxdy\right| &=
\left|\int_{A \times B} \left(\frac{W_n(x,y)}{\sqrt{d_n(x)d_n(y)}} -
\frac{W_0(x,y)}{\sqrt{d_0(x)d_0(y)}}\right)\ dx dy \right| \\
&\leq \int_{T_n} \frac{W_n(x,y)}{\sqrt{d_n(x)d_n(y)}}\ dxdy + \int_{T_n}
\frac{W_0(x,y)}{\sqrt{d_0(x)d_0(y)}}\ dxdy\\
&\leq 8 \epsilon + 8 \epsilon = 16 \epsilon.
\end{align*}
where the last inequality was obtained by Equations \eqref{EPeterTrickA}
and \eqref{Elambda}.

Finally, we show that $\boxnorm{(W_n'-W_0'){\bf 1}_{S_n}} \leq \epsilon
(2/\sqrt{\lambda} + 1)$ for $n$ large enough. Note that for $x \in R_n$,
we have $|d_n(x)^{-1/2} - d_0(x)^{-1/2}| \le 1/\lambda^{1/2}$. Since
$x^{-1/2}$ is Lipschitz in $[\lambda, 1]$ with Lipschitz constant $C$ for
some $C > 0$, we have
\[
\int_{R_n}|d_n^{-1/2}(x) - d_0^{-1/2}(x)| dx \leq C \cdot \int_{R_n}
|d_n(x) - d_0(x)| < \epsilon
\] 

\noindent for $n \ge M_{\epsilon}$ since $d_n \to d_0$ in $L^1$. Now, for
$x,y \in P_0 \cap P_n$, we have
\begin{align*}
& W_n'(x,y)-W_0'(x,y)\\
& = \frac{W_n(x,y)}{\sqrt{d_n(x)d_n(y)}} -
\frac{W_0(x,y)}{\sqrt{d_0(x)d_0(y)}} \\
& = \left(\frac{W_n(x,y)}{\sqrt{d_n(x)d_n(y)}} -
\frac{W_n(x,y)}{\sqrt{d_n(x)d_0(y)}}\right) +
\left(\frac{W_n(x,y)}{\sqrt{d_n(x)d_0(y)}} -
\frac{W_0(x,y)}{\sqrt{d_n(x)d_0(y)}}\right) \\
& \quad + \left(\frac{W_0(x,y)}{\sqrt{d_n(x)d_0(y)}} -
\frac{W_0(x,y)}{\sqrt{d_0(x)d_0(y)}}\right).
\end{align*}

\noindent We will bound the integral of each term separately. 

First, for $A,B \subset R_n$, 
\begin{align*}
\left|\int_{A \times B} \left(\frac{W_n(x,y)}{\sqrt{d_n(x)d_n(y)}} -
\frac{W_n(x,y)}{\sqrt{d_n(x)d_0(y)}}\right) dxdy\right| &\leq \int_{A
\times B} \left|\frac{1}{\sqrt{d_n(y)}} - \frac{1}{\sqrt{d_0(x)}}\right|
\frac{W_n(x,y)}{\sqrt{d_n(x)}} dxdy \\
&\leq \epsilon \cdot \frac{1}{\sqrt{\lambda}}.
\end{align*}

\noindent For the second term, we have 
\begin{align*}
& \left|\int_{A \times B} \left(\frac{W_n(x,y)}{\sqrt{d_n(x)d_0(y)}} -
\frac{W_0(x,y)}{\sqrt{d_n(x)d_0(y)}}\right)\ dxdy\right|\\
& = \left|\int_{A \times B} \left(W_n(x,y) - W_0(x,y) \right)
\frac{1}{\sqrt{d_n(x)d_0(y)}}\ dxdy\right| \\
& \leq \frac{1}{\lambda} \sup_{\substack{f, g: [0,1] \to [0,1] \\
{\rm supp}\ f,g \subset R_n}} \int_{[0,1]^2} \left(W_n(x,y) - W_0(x,y)
\right) f(x) g(y)\ dxdy \\
& \leq \frac{1}{\lambda} \boxnorm{(W_n-W_0){\bf 1}_{S_n}} \leq
\frac{1}{\lambda} \boxnorm{W_n-W_0} \leq \epsilon
\end{align*}
for $n \geq N'_\epsilon$.

For the third term, we have
\begin{align*}
& \left|\int_{A \times B}\left(\frac{W_0(x,y)}{\sqrt{d_n(x)d_0(y)}} -
\frac{W_0(x,y)}{\sqrt{d_0(x)d_0(y)}}\right)\ dxdy \right|\\
& \leq \frac{1}{\sqrt{\lambda}} \int_{A \times B}
\left|\frac{1}{\sqrt{d_n(x)}} - \frac{1}{\sqrt{d_0(x)}} \right| W_0(x,y)\
dxdy \\
& \leq \epsilon \cdot \frac{1}{\sqrt{\lambda}}.
\end{align*}

\noindent Putting everything together, we conclude that 
\[
\boxnorm{(W_n'(x,y)-W_0'(x,y)){\bf 1}_{S_n}} \leq 2 \epsilon \cdot
\frac{1}{\sqrt{\lambda}} + \epsilon
\]

\noindent for $n \geq \max(N_\epsilon, M_\epsilon, N'_\epsilon)$.
Finally, combining all the inequalities, we obtain that 
\[
\boxnorm{W_n'-W_0'} \leq (25 + 2/\sqrt{\lambda})\cdot \epsilon. 
\]

\noindent We conclude that $W_n' \to W_0'$ in cut-norm. To complete the
proof of the theorem, note that for $n \geq \max(N_\epsilon, M_\epsilon,
N'_\epsilon)$,
\begin{align*}
||L_{W_n}' - L_{W_0}'||_{\infty \to 1} &\le ||M_{{\bf 1}_{d_n(x) \neq 0}}
- M_{{\bf 1}_{d_0(x) \neq 0}}||_{\infty \to 1} + ||T_{W'_n} -
T_{W'_0}||_{\infty \to 1}.\\
&\le \mu_L(\{x : d_n(x) = 0\})  + 4 \boxnorm{W_n'-W_0'} \\
&\le 2 \epsilon  + 4 \boxnorm{W_n'-W_0'}
\end{align*}

\noindent by \eqref{Emeasure}, and this approaches $0$.
\end{proof}

We conclude this subsection by observing that the usual eigenvalue bounds
hold for our generalized version of the normalized Laplacian:

\begin{prop}\label{Pnorm-lap-evalue}
Given $W \in W_{[0,1]}$ with degree function $d$, all eigenvalues of the
normalized Laplacian $L'_W$ lie in $[0,2]$.
\end{prop}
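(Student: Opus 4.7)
The plan is to adapt the classical quadratic-form argument for the normalized Laplacian of a finite graph to the graphon setting. Write $P := \{x \in [0,1] : d(x) > 0\}$, so that $L'_W = M_{\mathbf{1}_P} - T_{W'}$ with $W'$ supported on $P \times P$. The key algebraic device is the substitution $g(x) := f(x)/\sqrt{d(x)}$ on $P$, which converts $\int_0^1 f^2\, dx$ into $\int_P d \cdot g^2\, dx$ and turns the cross term into an integral of $W$ against $g(x)g(y)$.

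First I would reduce to the case where an eigenfunction $f \in L^\infty$ is supported in $P$. Indeed, if $L'_W f = \lambda f$ and $x \in P^c$, then $M_{\mathbf{1}_P} f(x) = 0$ trivially, and also $T_{W'} f(x) = 0$ since $W'(x,\cdot) \equiv 0$ whenever $x \in P^c$. Hence $\lambda f(x) = 0$ on $P^c$, so either $\lambda = 0$ (already in $[0,2]$) or $f$ vanishes a.e.\ on $P^c$, and I assume the latter.

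Next, using $d(x) = \int_0^1 W(x,y)\, dy$, Fubini, and the symmetry of $W$, I obtain the identity
\[
\int_0^1 f^2\, dx \;=\; \int_P d \cdot g^2\, dx \;=\; \tfrac{1}{2} \iint_{P \times P} W(x,y) \bigl( g(x)^2 + g(y)^2 \bigr) dx\, dy,
\]
while $\iint W'(x,y) f(x) f(y)\, dx\, dy = \iint_{P \times P} W(x,y) g(x) g(y)\, dx\, dy$. Subtracting yields the Dirichlet-style identity
\[
\int_0^1 f \cdot (L'_W f)\, dx \;=\; \tfrac{1}{2} \iint_{P \times P} W(x,y) \bigl( g(x) - g(y) \bigr)^2 dx\, dy \;\geq\; 0,
\]
which forces $\lambda \geq 0$. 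Running the same manipulation on $2I - L'_W$ (which equals $I + T_{W'}$ on the support of $f$) converts the minus sign into a plus and gives $\tfrac{1}{2} \iint_{P \times P} W(x,y)(g(x)+g(y))^2 \,dx\, dy \geq 0$, so $\lambda \leq 2$.

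The one step that requires a bit of care is the interpretation of ``eigenvalue'' for an operator acting $L^\infty \to L^1$: the expression $\int f \cdot (L'_W f)\, dx$ is a duality pairing rather than an $L^2$-inner product, and the auxiliary function $g = f/\sqrt{d}$ may fail to lie in $L^2$ when $d$ is small. This is not a real obstacle, because each of the integrals $\int f^2\, dx$, $\int_P d \cdot g^2\, dx$, and $\iint W(x,y) g(x) g(y)\, dx\, dy = \iint W'(x,y) f(x) f(y)\, dx\, dy$ is absolutely convergent once $f \in L^\infty$ (using that $W' \in L^1$ by Corollary \ref{PboundWp}). So no approximation step is needed, and the two Dirichlet identities deliver both inequalities directly.
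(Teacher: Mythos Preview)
Your proposal is correct and follows essentially the same quadratic-form argument as the paper: reduce to an eigenfunction supported on $P = \{d > 0\}$, substitute $g = f/\sqrt{d}$, and rewrite $\langle f, L'_W f\rangle$ as the Dirichlet form $\tfrac{1}{2}\iint_{P\times P} W(x,y)(g(x)-g(y))^2\,dx\,dy$. Your treatment is in fact slightly more careful than the paper's sketch---you make the support reduction explicit, obtain the upper bound by pairing with $2I - L'_W$ (equivalent to the paper's use of $(a-b)^2 \le 2a^2 + 2b^2$), and justify the absolute convergence of the relevant integrals when $f \in L^\infty$ and $W' \in L^1$.
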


\begin{proof}
We provide a proof-sketch for completeness, as our setting is slightly
more general than is usually found in the literature (although the
argument is more or less standard). Define $D_+ := \{ x \in [0,1] : d(x)
> 0 \}$; now if $L'_W g = \lambda g$ for some (nonzero) eigenfunction
$g$, then evaluating against $g$ yields:
\[
\lambda \int_0^1 g(x)^2\ dx = \int_0^1 g(x)^2 d(x) {\bf 1}_{d(x) > 0}\ dx
- \int_{D_+} \int_{D_+} g(x) \frac{W(x,y)}{\sqrt{d(x) d(y)}} g(y)\ dy\
dx.
\]

\noindent Define $h(x) := g(x)/\sqrt{d(x)}$ on $D_+$, and $0$ otherwise.
Then one verifies that the above equation translates to:
\[
\lambda \int_0^1 g(x)^2\ dx = \frac{1}{2} \int_{D_+} \int_{D_+} W(x,y)
(h(x) - h(y))^2\ dy dx \geq 0,
\]

\noindent whence $\lambda \geq 0$. On the other hand,
\begin{align*}
\frac{1}{2} \int_{D_+} \int_{D_+} W(x,y) (h(x) - h(y))^2\ dy dx \leq &\
\frac{1}{2} \int_{D_+} \int_{D_+} W(x,y) (2 h(x)^2 + 2 h(y)^2)\ dy dx\\
= &\ 2 \int_{D_+} h(x)^2 d(x)\ dx \leq 2 
\int_0^1 g(x)^2\ dx,
\end{align*}

\noindent from which it follows that $\lambda \leq 2$.
\end{proof}

\subsection{Structural consistency of spectral clustering (normalized
Laplacian)}\label{Sconsistency}

We now examine the convergence of the eigenvalues and eigenvectors of the
normalized Laplacian of a convergent sequence of graphons. 

Given a graphon $W \in \W_{[0,1]}$, we will denote by $\mu_1(W), \dots,
\mu_k(W)$ the $k$ smallest nonzero eigenvalues of $L'_{W}$ (see
Proposition \ref{Pnorm-lap-evalue}), and by $f^1_W, \dots, f^k_W \in
L^2([0,1])$ associated eigenvectors/eigenfunctions.

\begin{utheorem}\label{TSpClusteringNorm}
Fix $m \geq 1$, let $\mathcal{D}_{m,\alpha}$ be the set of graphons $W$
such that, $W' \in L^\infty([0,1]^2)$ is uniformly bounded above by
$\alpha > 0$, and $\mu_1(W), \dots, \mu_m(W)$ are all simple. Let
$(W_n)_{n \geq 1} \subset \mathcal{D}_{m,\alpha}$ and $W_0 \in
\mathcal{D}_{m,\alpha}$ such that $\boxnorm{W_n- W_0} \to 0$ as $n \to
\infty$. Assume that 
\[
\int_0^1 W_0(x,y)\ dy > 0 \qquad \textrm{for a.e. } y \in [0,1].
\] 

\noindent Normalize the associated eigenvectors of the smallest $m$
eigenvalues $\mu_1(W_n), \dots, \mu_m(W_n)$ so that 
\begin{equation*}
\langle f^1_{W_n}, h\rangle > 0, \dots, \langle f^m_{W_n}, h\rangle > 0
\end{equation*}

\noindent for some $h \in L^\infty([0,1])$ and all $n \geq 0$. Define $f
: \{ W_n : n \geq 0 \} \times [0,1] \to \R^m$ by 
\[
f(W_n,y) := (f^1_{W_n}(y), \dots, f^m_{W_n}(y))^T. 
\]

\noindent Moreover, let $N \geq 1$ and let $(A_j)_{j=1}^N \subset \R^m$
be a collection of disjoint open sets such that for all $n \geq 0$,
\[
f(W_n,y) \in \bigcup_{j=1}^N A_j \qquad \textrm{for a.e. } y \in [0,1].
\]

\noindent Then we have:
\begin{enumerate}
\item $f(W^\sigma,y) = f(W,\sigma(y))$ for all $\sigma \in S_{[0,1]}$, $W
\in \mathcal{D}_{m,\alpha}$, and almost every $y \in [0,1]$.

\item Set $S := \{1,\dots,N\}$, and for $n \geq 0$, define $F(W_n) :=
(W_n, c_{W_n})$, where $c_{W_n}(y)$ is the unique $j \in \{1,\dots,N\}$
such that $f(W_n,y) \in A_j$. Then $\boxnorm{F(W_n)- F(W_0)}^S \to 0$ as
$n \to \infty$.
\end{enumerate}
\end{utheorem}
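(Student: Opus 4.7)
The plan is to deduce both claims from Theorem \ref{TclusterGen} by constructing the appropriate node-level statistic and verifying its continuity and $S_{[0,1]}$-invariance. Define $f(W,y) := (f^1_W(y), \dots, f^m_W(y))^T$ on $\mathcal{D}_{m,\alpha}$. Once we show that (a) $f$ is a continuous $\R^m$-valued node-level statistic in the sense of Definition \ref{Dnode-stat-metric}, and (b) $f$ is $S_{[0,1]}$-invariant, claim (1) is immediate from (b), and claim (2) follows by applying Theorem \ref{TclusterGen} with the disjoint open sets $(A_j)_{j=1}^N$ and the subfamily $\{W_n : n \geq 0\} \subset \mathcal{D}_{m,\alpha}$ playing the role of $\mathcal{D}$ in that theorem.

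For continuity, the chain of ingredients is already in place. Since $d_0(x) > 0$ for almost every $x$, Theorem \ref{Tconv_norm_lap} gives $\boxnorm{W'_n - W'_0} \to 0$. Because $\{W'_n\}_{n \geq 0}$ is uniformly bounded in $L^\infty$ by $\alpha$ (by the definition of $\mathcal{D}_{m,\alpha}$), Theorem \ref{Teig} applies to the sequence of symmetric kernels $(W'_n) \subset \W$ and yields $\lambda_k(W'_n) \to \lambda_k(W'_0)$ together with $L^2([0,1]^2)$-convergence of the rank-one tensors $p^k_n(x,y) := f^k_{W_n}(x) f^k_{W_n}(y)$ to $p^k_0$. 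Here one uses the bijection between the eigenvalues of $L'_W = M_{\mathbf{1}_{d>0}} - T_{W'}$ and those of $T_{W'}$ (via $\mu = 1 - \lambda$ on the support of $d$), so simplicity of the $m$ smallest nonzero $\mu_k(W_n)$ built into $\mathcal{D}_{m,\alpha}$ translates into simplicity of the corresponding largest $\lambda_k(W'_n)$. An application of Lemma \ref{Lproj2f} to each coordinate, with the fixed function $h \in L^\infty$ from the hypothesis and the normalization $\langle f^k_{W_n}, h \rangle > 0$, promotes this to $L^2([0,1])$-convergence of the eigenvectors $f^k_{W_n} \to f^k_{W_0}$. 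Since $[0,1]$ has finite Lebesgue measure, $L^2$-convergence implies $L^1$-convergence, so $f(W_n) \to f(W_0)$ in $L^1([0,1], \R^m)$; this is exactly continuity of $f$ in the sense of Definition \ref{Dnode-stat-metric}.

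Claim (1) follows from a direct symmetry calculation. For $\sigma \in S_{[0,1]}$, the degree of $W^\sigma$ is $d \circ \sigma$, and the normalized kernel satisfies $(W^\sigma)'(x,y) = W'(\sigma(x), \sigma(y))$; hence $L'_{W^\sigma}(g) = L'_W(g \circ \sigma^{-1}) \circ \sigma$. Consequently $g$ is an eigenfunction of $L'_{W^\sigma}$ with eigenvalue $\mu$ if and only if $g \circ \sigma^{-1}$ is one of $L'_W$ with the same eigenvalue, and measure-preservation of $\sigma$ preserves $L^2$-norms, eigenvalue orderings, and simplicity; compatibility of the sign normalization across the bijection then gives $f^k_{W^\sigma}(y) = f^k_W(\sigma(y))$ a.e. Claim (2) is now the direct output of Theorem \ref{TclusterGen} applied to the continuous node-level statistic $f$ and the collection $(A_j)_{j=1}^N$.

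The main obstacle is the bookkeeping of the eigenvalue--eigenvector correspondence across the limit: one must ensure that the smallest $m$ nonzero eigenvalues of $L'_{W_n}$ correspond (for large $n$) to the smallest $m$ nonzero eigenvalues of $L'_{W_0}$, and that the associated rank-one eigenprojections converge in operator norm. The simplicity assumption in $\mathcal{D}_{m,\alpha}$ and the spectral gap it induces are precisely what make Theorem \ref{Teig} applicable in the required form, and Lemma \ref{Lproj2f} combined with the fixed sign normalization $\langle f^k_{W_n}, h\rangle > 0$ handles the sign ambiguity that is inherent to eigenvectors of simple eigenvalues.
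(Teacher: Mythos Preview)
Your proposal is correct and follows essentially the same route as the paper: invoke Theorem \ref{Tconv_norm_lap} to get $\boxnorm{W'_n - W'_0} \to 0$, use the uniform $L^\infty$ bound and Theorem \ref{Teig} together with Lemma \ref{Lproj2f} (with the fixed $h$ handling the sign) to obtain $L^2$ and hence $L^1$ convergence of the eigenvectors, verify $S_{[0,1]}$-invariance by the change-of-variables computation, and conclude via Theorem \ref{TclusterGen}. Your framing as ``verify the hypotheses of Theorem \ref{TclusterGen}'' is slightly more explicit than the paper's, but the ingredients and their order are the same.
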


\begin{remark}
It is useful to examine how Theorem \ref{TSpClusteringNorm} above
compares to the theorems in \cite{vonLuxburg2008} on convergence of
spectral clustering using normalized Laplacians.  Firstly, in that work,
the degree functions $d_n$ satisfy $d_n \geq \lambda > 0$. We make a more
general assumption about the normalized Laplacians (see Example
\ref{Elaplacian}).
Secondly and very importantly, we do not make a specific modeling
assumption about the data generating mechanism for the graphs. We merely
assume that the graphs come to us in a way that is convergent in the
graph topology.  This subsumes the mechanism assumed in
\cite{vonLuxburg2008} as a special case and includes many others.  As a
consequence, our arguments require different techniques which are
suitable to the graphon topology.
\end{remark}

\begin{ex}\label{Elaplacian}
We illustrate with an example how Theorem \ref{TSpClusteringNorm} allows
working outside the setting in \cite{vonLuxburg2008}, in which the degree
was assumed to be bounded away from zero. Fix $\alpha > 1$, and consider
distinct measurable functions $g_1, \dots, g_k : [0,1] \to [0,1]$ such
that
\[
\| (g_1, \dots, g_k) \|_\infty \leq 1, \qquad
\int_0^1 g_i(x)\ dx \geq \alpha^{-1}\ \forall i.
\]
Define
\[
W(x,y) := \sum_{i=1}^k g_i(x) g_i(y) \in \W_{[0,1]}.
\]

\noindent Using the notation $\tangle{\cdot,\cdot}$ for the inner product
in $L^2$, the degree function is $d_W(y) = \sum_i \tangle{g_i, {\bf 1}}
g_i(y)$, and this is not necessarily bounded away from $0$. Now the
normalized kernel is
\[
W'(x,y) = \frac{\sum_i g_i(x) g_i(y)}{\sqrt{\sum_i \tangle{g_i, {\bf 1}}
g_i(x) \cdot \sum_i \tangle{g_i, {\bf 1}} g_i(y)}}.
\]

\noindent Now note that since $g_i(y) \in [0,1]$, by choice of $\alpha$
we have
\[
\sum_{i=1}^k g_i(y)^2 \leq \sum_{i=1}^k g_i(y) \cdot \tangle{g_i, {\bf
1}} \alpha, \qquad \forall y \in [0,1].
\]

\noindent Hence by the Cauchy--Schwarz inequality,
\[
W'(x,y)^2 \leq
\frac{\sum_i g_i(x)^2}{\sum_i \tangle{g_i, {\bf 1}} g_i(x)} \cdot
\frac{\sum_i g_i(y)^2}{\sum_i \tangle{g_i, {\bf 1}} g_i(y)} \leq
\alpha^2,
\]

\noindent whence $W'(x,y) \in [0,\alpha]$, satisfying the corresponding
hypothesis in Theorem \ref{TSpClusteringNorm}.
\end{ex}

We now prove Theorem \ref{TSpClusteringNorm}.

\begin{proof}[Proof of Theorem \ref{TSpClusteringNorm}]
To verify (1), suppose $\lambda$ is an eigenvalue of $T_W$ for some
symmetric $W \in L^2([0,1]^2)$. Let $f_W \in L^2([0,1])$ be an associated
eigenfunction. Then for $\sigma \in S_{[0,1]}$, 
\[
\lambda f_W(\sigma(x)) = \int_0^1 W(\sigma(x),y) f_W(y)\ dy = \int_0^1
W^\sigma(x,y) f_W(\sigma(y))\ dy. 
\]

\noindent Therefore $\lambda$ is also an eigenvalue of $T_{W^\sigma}$
with associated eigenfunction $f_{W^\sigma}(x) = f_W(\sigma(x))$. This
proves (1). 

Now suppose $\boxnorm{W_n-W_0} \to 0$. By Theorem \ref{Tconv_norm_lap},
$\boxnorm{W_n' - W_0'} \to 0$, where $W_n'$ and $W_0'$ denote the
normalized kernels as in Equation \eqref{DnLap}.
By Proposition \ref{Pnorm-lap-evalue}, the smallest $m$ nonzero
eigenvalues $\mu_1(W_n), \dots, \mu_m(W_n)$ of $L'_{W_n}$ are in
bijection with the largest eigenvalues $\lambda_1(W'_n), \dots,
\lambda_m(W'_n)$ of $W_n'$ that are not equal to $1$.
By Theorem \ref{Teig}, these eigenvalues converge to $\lambda_1(W'_0),
\dots, \lambda_m(W'_0)$. Moreover, since $W_n \in \mathcal{D}_{m,\alpha}\
\forall n$, the eigenvectors associated to $\lambda_1(W'_n), \dots,
\lambda_m(W'_n)$ are the same as the eigenvectors of $L'_{W_n}$
associated to $1-\lambda_1(W'_n), \dots, 1-\lambda_m(W'_n)$. Now since
the $W'_n$ are uniformly bounded by $\alpha$, apply Theorem \ref{Teig}(2)
and Lemma \ref{Lproj2f} to obtain
\[
\|f^i_{W_n} - f^i_{W_0}\|_2 \to 0 \qquad \textrm{as } n \to \infty \qquad
(i=1,\dots,m).
\]

\noindent Since $[0,1]$ has finite measure, by Cauchy--Schwarz it follows
that
\[
\|f^i_{W_n} - f^i_{W_0}\|_1 \to 0 \qquad \textrm{as } n \to \infty \qquad
(i=1,\dots,m).
\]

\noindent Now since
\begin{align*}
\| f(W_n) - f(W_0) \|_1 & \leq \sum_{i=1}^m \|f^i_{W_n} - f^i_{W_0}\|_1,
\end{align*}
it follows that $f(W_n) \to f(W_0)$ as $n \to \infty$. 
The result now follows by Theorem \ref{TclusterGen}.
\end{proof}

\begin{remark}
The assumption that $W'_n$ are uniformly bounded guarantees a well behaved
spectrum in the limit. To illustrate the difficulty of working without
some regularity hypothesis, consider any partition of $[0,1] =
\bigsqcup_{j=1}^\infty I_j$ into countably many measurable subsets with
positive measures. Define the graphon
\[
W_0(x,y) = \sum_{j=1}^\infty {\bf 1}_{I_j \times I_j}.
\]
In this case,
\[
W'_0 = \sum_{j=1}^\infty \frac{1}{\mu_L(I_j)} {\bf 1}_{I_j \times I_j},
\qquad \text{so } \| W'_0\|_2^2 = \sum_{j=1}^\infty 1 = \infty.
\]

\noindent In particular, $W'_0$ is not bounded either.  If we now compute
the eigenvalues and eigenvectors of $W_0'$, we find that the spectrum
consists of just $0$ and $1$, both with infinite multiplicity. In
particular, ${\bf 1}_{I_j}$ is an eigenvector of eigenvalue $1$ for all
$j$.

We can now perturb $W_0$ to $W_1$ while preserving the block structure,
so that there will be one eigenvalue of $W'_1$ in $(1 - \epsilon_j,1)$
corresponding to one eigenvector which is a small perturbation of ${\bf
1}_{I_j}$.  If the $\epsilon_j \to 0$ then the eigenvalues of $W'_1$
converge to $1$.  It is no longer clear how to properly define the
clustering for the limit of the corresponding normalized Laplacian
sequence in that case.
\end{remark}

\appendix

\section{Proofs for dense $S$-colored graph limit theory}\label{Acolor}

\noindent \textbf{Proof of Theorem \ref{LcountingColor}:}

Before proving the result, we explain the general idea of the proof on an
example. We adapt the idea in \cite[Lemma 10.24]{Lovasz:2008}. Suppose
$H$ is a path on $4$ vertices, $V(H) = \{1,2,3,4\}$, $E(H) =
\{(1,2),(2,3),(3,4)\}$. Then,
\begin{align*}
&t_S(H,W) - t_S(H,W') = \\
&\int_{[0,1]^4} f_W(x_1,x_2) f_W(x_2,x_3) f_W(x_3,x_4) \cdot
\prod_{i=1}^4 {\bf 1}_{c_W(x_i) = c_H(i)}\ dx_i \\
&- \int_{[0,1]^4} f_{W'}(x_1,x_2) f_{W'}(x_2,x_3) f_{W'}(x_3,x_4) \cdot
\prod_{i=1}^4 {\bf 1}_{c_{W'}(x_i) = c_H(i)}\ d x_i \\
&=  \int_{[0,1]^4} \left[f_W(x_1,x_2) f_W(x_2,x_3)
f_W(x_3,x_4)-f_W(x_1,x_2) f_W(x_2,x_3) f_{W'}(x_3,x_4)\right] \cdot
\prod_{i=1}^4 {\bf 1}_{c_W(x_i) = c_H(i)}\ dx_i \\
&+ \int_{[0,1]^4} \left[f_W(x_1,x_2) f_W(x_2,x_3)
f_{W'}(x_3,x_4)-f_W(x_1,x_2) f_{W'}(x_2,x_3) f_{W'}(x_3,x_4)\right] \cdot
\prod_{i=1}^4 {\bf 1}_{c_W(x_i) = c_H(i)}\ dx_i \\ 
&+ \int_{[0,1]^4} \left[f_W(x_1,x_2) f_{W'}(x_2,x_3)
f_{W'}(x_3,x_4)-f_{W'}(x_1,x_2) f_{W'}(x_2,x_3) f_{W'}(x_3,x_4)\right]
\cdot \prod_{i=1}^4 {\bf 1}_{c_W(x_i) = c_H(i)}\ dx_i \\
&+ \int_{[0,1]^4} f_{W'}(x_1,x_2) f_{W'}(x_2,x_3) f_{W'}(x_3,x_4)\cdot
\left(\prod_{i=1}^4 {\bf 1}_{c_W(x_i) = c_H(i)}-\prod_{i=1}^4 {\bf
1}_{c_{W'}(x_i) = c_H(i)}\right)\ \prod_{i=1}^4 dx_i
\end{align*}

\noindent Thus, to prove the Counting Lemma, it suffices to obtain a
bound for integrals of the form: 
\[
\int_{[0,1]^4} \left(f_W(x_1,x_2) - f_{W'}(x_1,x_2)\right) \prod_{e \in
E(H) \setminus (1,2)} W_e(x_{e_s},x_{e_t}) \cdot \prod_{i=1}^4 {\bf
1}_{c_W(x_i) = c_H(i)} dx_i, 
\]

\noindent where $0 \leq W_e(x,y) \leq 1$ are arbitrary functions, and a
bound for 
\[
\int_{[0,1]^4} f_{W'}(x_1,x_2) f_{W'}(x_2,x_3) f_{W'}(x_3,x_4)\cdot
\left(\prod_{i=1}^4 {\bf 1}_{c_W(x_i) = c_H(i)}- \prod_{i=1}^4{\bf
1}_{c_{W'}(x_i) = c_H(i)}\right)\ \prod_{i=1}^4 dx_i.
\]
We now provide such bounds. 

\begin{proof}
As explained above, to prove the lemma, it suffices to provide a bound
for 
\begin{equation}\label{Eb1}
\int\displaylimits_{[0,1]^{V(H)}}  (f_W(x_\alpha,x_\beta) -
f_{W'}(x_\alpha,x_\beta))\prod_{e \in E(H) \setminus (\alpha,\beta)}
W_e(x_{e_s}, x_{e_t}) \prod_{v \in V(H)} {\bf 1}_{c_W(x_v) = c_H(v)}
\prod_{v \in V(H)}dx_v
\end{equation}

\noindent where $0 \leq W_e \leq 1$ are arbitrary functions, and a bound
for 
\begin{equation}\label{Eb2}
\int_{[0,1]^{|V(H)|}} \prod_{e \in E(H)} f_{W'}(x_{e_s},x_{e_t})
\left(\prod_{v \in V(H)} {\bf 1}_{c_W(x_v) = c_H(v)}-\prod_{v \in
V(H)}{\bf 1}_{c_{W'}(x_v) = c_H(v)}\right) \prod_{v \in V(H)} dx_v.
\end{equation}

\noindent Given $v \in V(H)$, denote by $\nabla(v) := \{(e_s,e_t) \in
E(H) : e_s = v \textrm{ or } e_t = v\}$, and let
\begin{align*}
f(x_\alpha) &:=  \prod_{e \in \nabla(\alpha) \setminus (\alpha,\beta)}
W_e(e_s,e_t) \prod_{v \in V(H) \setminus \{\alpha,\beta\}} dx_v, \\
g(x_\beta) &:=  \prod_{e \in E(H) \setminus \nabla(\alpha)} W_e(e_s,e_t)
\prod_{v \in V(H) \setminus \{\alpha,\beta\}} dx_v, \\
f_1(x_\alpha) &:= {\bf 1}_{c_W(x_\alpha) = c_H(\alpha)} \prod_{v \in V(H)
\setminus \{\alpha,\beta\}} {\bf 1}_{c_W(x_v) = c_H(v)} dx_v,  \\
g_1(x_\beta) &:= {\bf 1}_{c_W(x_\beta) = c_H(\beta)}. 
\end{align*}

\noindent Using this notation, we obtain the following bound for
\eqref{Eb1}
\begin{align*}
&\left|\int_{[0,1]^{|V(H)|}}  (f_W(x_\alpha,x_\beta) -
f_{W'}(x_\alpha,x_\beta))f(x_\alpha)g(x_\beta)f_1(x_\alpha)g_1(x_\beta)\
\prod_{v \in V(H)} dx_v \right| \\
&\leq \int_{[0,1]^{|V(H)|-2}} \left|\int_{[0,1]^2}(f_W(x_\alpha,x_\beta)
-
f_{W'}(x_\alpha,x_\beta))f(x_\alpha)g(x_\beta)f_1(x_\alpha)g_1(x_\beta)\
dx_\alpha dx_\beta\right| \prod_{v \in V(H) \setminus \{\alpha,\beta\}}
dx_v \\ 
&\leq \int_{[0,1]^{|V(H)|-2}} \boxnorm{W-W'} \prod_{v \in V(H) \setminus
\{\alpha,\beta\}} dx_v \\
&= \boxnorm{W-W'}, 
\end{align*}

\noindent since $0 \leq f(x_\alpha) f_1(x_\alpha) \leq 1$ and $0 \leq
g(x_\beta) g_1(x_\beta) \leq 1$. For \eqref{Eb2}, we have
\begin{align*}
&\left|\int_{[0,1]^{|V(H)|}} \prod_{e \in E(H)} f_{W'}(x_{e_s},x_{e_t})
\left(\prod_{v \in V(H)} {\bf 1}_{c_W(x_v) = c_H(v)}-\prod_{v \in
V(H)}{\bf 1}_{c_{W'}(x_v) = c_H(v)}\right) \prod_{v \in V(H)} dx_v\right|
\\
&\leq \int_{[0,1]^{|V(H)|}} \left|\prod_{v \in V(H)} {\bf 1}_{c_W(x_v) =
c_H(v)}-\prod_{v \in V(H)} {\bf 1}_{c_{W'}(x_v) = c_H(v)}\right| \prod_{v
\in V(H)} dx_v \\
&= \mu_L^{|S|}\left(\left(\cart_{s \in S} c_W^{-1}(s)\right) \Delta
\left(\cart_{s \in S} c_{W'}^{-1}(s)\right)\right) \\
&\leq \sum_{s \in S} \mu_L(c_W^{-1}(s) \Delta c_{W'}^{-1}(s)).
\end{align*}

\noindent The result now follows by a telescoping argument as the one
provided before the proof. 
\end{proof}

\begin{proof}[Proof of Theorem \ref{Tcompact}]
Let $(\W_n)_{n \geq 1} \subset \W_S$ be a sequence of colored graphons.
We will show that $(\W_n)_{n \geq 1}$ has a convergent subsequence. Note
that there exist measure preserving maps $\sigma_n: [0,1] \to [0,1]$ such
that the partitions defined by the $c_{W_n^{\sigma_n}}^{-1}(s)$ for $s
\in S$ are intervals ordered in a fixed arbitrary ordering of colors $S$.
Without loss of generality, we will assume that such transformations have
been applied to the $W_n$ so that the $c_{W_n}^{-1}(s)$ are ordered
intervals. Moreover, since the vector of measures $(\mu_L
(c_{W_n}^{-1}(s)))_{s \in S}$ sits on the simplex which is compact, we
can also assume that this vector also converges as $n \to \infty$. It
follows that the limit $c_0(x) := \lim_{n \to \infty} c_{W_n}(x)$
exists almost everywhere on $[0,1]$ and $\mu_L(c_{W_n}^{-1}(s) \Delta
c_{0}^{-1}(s)) \to 0$ as $n \to \infty$ for every $s \in S$.

The proof now proceeds as in \cite[Theorem 9.23]{Lovasz:2013}. Note that
the original partitions $P_{n,1}$ can always be chosen to respect the
partition defined by $c_{W_n}^{-1}(s)$ for $s \in S$. Since the
successive partitions $P_{n,k}$ are refinements of $P_{n,1}$, they will
also respect the coloring. Proceeding as in \cite[Theorem
9.23]{Lovasz:2013}, we obtain a subsequence $W_{n_k}$ and $W_0 \in \W$
such that $\boxnorm{W_{n_k} - W_0} \to 0$.  Finally, since $c_{W_0} =
c_0$ and $\mu_L(c_{W_{n_k}}^{-1}(s) \Delta c_{0}^{-1}(s)) \to 0$,  then
$\boxnorm{W_{n_k} - W_0}^S \to 0$ as well. This concludes the proof of
the theorem.
\end{proof}

\begin{proof}[Proof of Theorem \ref{thm:Top_Equiv}]
We adapt a proof of L.~Schrijver \cite{Schrijver} to the colored graphon
case.  

Let $\G_S$ be the set of isomorphism classes of $S$-colored finite simple
graphs with no isolated vertices.  Then there is a map $M: (\W_S/\sim,
\delta_\Box^S) \to [0,1]^{\G_S}$ defined by setting the $H$ component of
$M(W)$ to be equal to $t_S(H,W)$.  This map is continuous and
well-defined by the Counting Lemma for colored graphons (Lemma
\ref{LcountingColor}).  Since $(\W_S/\sim, \delta_\Box^S)$ is compact
(Theorem \ref{Tcompact}) and $[0,1]^{\G_S}$ is Hausdorff, it suffices to
show that the map $M$ is injective in order to conclude that it is a
homeomorphism onto its image, thereby concluding the proof.

To show the injectivity of $M$, assume that two colored graphons $U,V \in
\W_S$ have equal homomorphism densities for all $H \in \G_S$.
To show that $\delta_\Box^S(U,V) = 0$ we work with a few sampling
distributions.

Let $\mathbb{H}_S(n,U)$ denote a random weighted graph sampled from $U$
by sampling $(X_i)_{i =1}^n$ i.i.d.~from the uniform distribution on
$[0,1]$, and then taking $U(X_i,X_j)$ to be the weight between nodes $i$
and $j$. The coloring is defined by $c_{\mathbb{H}_S(n,U)}(i) :=
c_U(X_i)$.  Given an $S$-colored weighted graph $H$ with $n$ vertices,
let $\mathbb{G}_S(H)$ denote the finite $S$-colored graph $G$ on $n$
vertices where for $i > j$, $(i,j) \in E(G)$ with probability $H(i,j)$
and $G$ is made symmetric.  The coloring $c_G(i) := c_H(i)$.  Lastly, let
$\mathbb{G}_S(n,U) := \mathbb{G}_S(\mathbb{H}_S(n,U))$, so that the
$\mathbb{H}_S(n,U)$ and $\mathbb{G}_S(n,U)$ are coupled in this way.

Note that $\mathbb{G}_S(n,U) = \mathbb{G}_S(n,V)$ in law for every $n$,
because the probabilities $\bp(\mathbb{G}_S(n,W) = H)$ can be derived
from the homomorphism densities $t_S(H,W)$ by inclusion-exclusion.

By the triangle inequality, $\delta^S_\Box(U,V) \le
\delta^S_\Box(U,\mathbb{G}_S(n,U)) + \delta^S_\Box(V,\mathbb{G}_S(n,U))$.
Thus,
\begin{align*}
\delta^S_\Box(U,V) &\le \E(\delta^S_\Box(U,\mathbb{G}_S(n,U))) +
\E(\delta^S_\Box(V,\mathbb{G}_S(n,U))) \\
&= \E(\delta^S_\Box(U,\mathbb{G}_S(n,U))) +
\E(\delta^S_\Box(V,\mathbb{G}_S(n,V))).
\end{align*}

\noindent To conclude the proof, it therefore suffices to show that
$\E(\delta^S_\Box(\mathbb{G}_S(n,W), W))\to 0$ for any graphon $W \in
\W_S$ as $n \to \infty$. We first show that $\mathbb{H}_S(n,W)$ and
$\mathbb{G}_S(n,W)$ are close when coupled in the obvious way.  Let $H$ be
a weighted graph with $n$ vertices. We claim that there exists some fixed
constant $C > 0$ so that
\begin{equation*}
\bp(d^S_\Box(\mathbb{G}_S(H),H) > \epsilon) \le e^{-\epsilon^2 n^2/C}, 
\end{equation*}

\noindent 
where $d^S_\Box(W,W') = \boxnorm{W-W'}^S$. To bound the
cut-norm of a step function $W$,
\[
\boxnorm{W} = \sup_{A,B \subset [0,1]} \left| \int_{A \times B} W(x,y) dx
dy \right|,
\]

\noindent it suffices to consider only the sets $A,B$ which are composed
of unions of steps.  Therefore we consider for any subsets $A,B \subset
\{1, \dots, n\}$ the random variable
\begin{equation*}
\sum_{i \in A, j \in B} \mathbf{1}((i,j) \in E(\mathbb{G}_S(H))) -
\beta_{ij}(H).
\end{equation*}
The Chernoff Inequality yields
\begin{equation*}
\bp\left(\left|\sum_{i \in A, j \in B} \mathbf{1}((i,j) \in
E(\mathbb{G}_S(H))) - \beta_{ij}(H)\right| > \epsilon n^2 \right) \le 2
\exp\left(\frac{-\epsilon^2 n^4}{C |A| |B|}\right)
\end{equation*}

\noindent for some fixed constant $C > 0$. There are only $4^n$ pairs of
sets $A,B$ so our claim follows by the union bound. We conclude, by
picking $\epsilon = C /\sqrt{n}$, that
\begin{equation*}
\E(d^S_\Box(\mathbb{G}_S(H),H)) \le \frac{C}{\sqrt{n}} + e^{-Cn}
\end{equation*}
which goes to zero as $n \to \infty$.

For graphons $W, W' \in \W_S$, define
\[
d_1^S(W,W') :=  \|W-W'\|_1 + \sum_{s \in S} \mu_L(c_W^{-1}(s)
\Delta c_{W'}^{-1}(s)), 
\]

\noindent and let $\delta^S_1(W,W') := \inf_{\sigma \in S_{[0,1]}}
d^S_1(W,W')$. Clearly, 
\[
\delta^S_\Box(W,W') \leq \delta^S_1(W,W').
\]

\noindent We will now show that $\E(\delta^S_1(\mathbb{H}_S(n,W), W)))
\to 0$.  Let $\mathcal{P}$ be a finite partition of $[0,1]$ which is a
refinement of the fibers of the coloring $c_W$.  Then define
$W_\mathcal{P}$ to be the graphon obtained from $W$ by averaging over the
rectangles defined by the partitions $\mathcal{P}$ with
$c_{W_{\mathcal{P} }} := c_W$.

The triangle inequality yields
\begin{align*}
\E(\delta^S_1(W, \mathbb{H}_S(n,W)))  &\le \delta^S_1(W, W_\mathcal{P})  +
\E(\delta^S_1(W_\mathcal{P},\mathbb{H}_S(n,W_\mathcal{P}))) \\
&+ \E(\delta^S_1(\mathbb{H}_S(n,W),\mathbb{H}_S(n,W_\mathcal{P})))
\end{align*}
where $\mathbb{H}_S(n,W)$ and $\mathbb{H}_S(n,W_\mathcal{P})$ are coupled
by the joint choice of $X_i$ when sampling.

Note that the first term is small for sufficiently fine $\mathcal{P}$.
The second term is small for sufficiently large $n$, since we need only
count the number of points in each partition in $\mathcal{P}$. For the
third term, we claim that
\begin{equation*}
\E(d^S_1(\mathbb{H}_S(n,W),\mathbb{H}_S(n,V))) = d^S_1(W,V)
\end{equation*}
when $\mathbb{H}_S(n,W)$ and $\mathbb{H}_S(n,V)$ are coupled by the joint
choice of $X_i$ when sampling. Indeed, let $X_1, \dots, X_n$ be
independent random variables uniformly distributed on $[0,1]$. Then 
\begin{align*}
\E (\|\mathbb{H}_S(n,W)-\mathbb{H}_S(n,V)\|_1) &= \E \left(\frac{1}{n^2}
\sum_{i,j=1}^n |W(X_i, X_j) - V(X_i, X_j)|\right) \\
&= \frac{1}{n^2} \sum_{i,j=1}^n \int_{[0,1]^2} |W(x_i, x_j) - V(x_i,
x_j)|\ dx_i dx_j \\
&= \|W-V\|_1.
\end{align*}

\noindent To compute the other terms of $d^S_1$, we examine which color
is assigned to each interval $(\frac{i-1}{n}, \frac{i}{n}]$ of the two
graphons. Indeed, for each $s \in S$,
\begin{align*}
\E \left( \mu_L(c_{\mathbb{H}_S(n,W)}^{-1}(s) \Delta
c_{\mathbb{H}_S(n,V)}^{-1}(s)) \right)
&= \frac{1}{n}\sum_{i=1}^n \bp(X_i \in c_W^{-1}(s) \Delta c_V^{-1}(s)) \\
&= \mu_L(c_W^{-1}(s) \Delta c_V^{-1}(s)).
\end{align*}

\noindent We conclude that
\[
\E(d^S_1(\mathbb{H}_S(n,W),\mathbb{H}_S(n,V))) = d^S_1(W,V).
\]

\noindent Finally, by the triangle inequality,
\[ \E(\delta^S_\Box(\mathbb{G}_S(n,W), W))) \le
\E(\delta^S_\Box(\mathbb{G}_S(n,W), \mathbb{H}_S(n,W)))\\
+ \E(\delta^S_1(W,\mathbb{H}_S(n,W))), \]

\noindent and both terms on the right converge to zero as shown above. We
therefore have that
\[
\lim_{n \to \infty} \E(\delta^S_\Box(\mathbb{G}_S(n,W), W))) = 0, 
\]
as desired. This concludes the proof.
\end{proof}

\begin{proof}[Proof of Theorem \ref{thm:density}]
Without loss of generality, assume $S = \{1,\dots, N\}$ and let $(W,c_W)
\in \W_S$. There exist a partition of $[0,1]$ into intervals $I_1, \dots,
I_N$ and a measure preserving bijection $\sigma \in S_{[0,1]}$ such that
$c_W(\sigma(x)) \equiv i$ for a.e.~$x\in I_i$. Without loss of
generality, we will assume $W$ has this property (otherwise, replace $W$
by $W^\sigma$). By the density of graphs in $\W_{[0,1]}$, there exists a
sequence of graphs $(G_n)_{n \geq 1}$ such that $\boxnorm{W_{G_n}-W} \to
0$ as $n \to \infty$. Let $v_n := |V(G_n)|$ and assume without loss of
generality that $V(G_n) = \{1,\dots, v_n\}$. We will also assume, without
loss of generality, that $v_n \to \infty$. Note that for $n$ large
enough, almost every point in the interval $((i-1)/v_n, i/v_n]$ is
contained in one of the $I_j$, say in $I_{J(i)}$. Define $c_{G_n}(i) =
J(i)$. It follows easily that $\boxnorm{(W_{G_n}, c_{W_{G_n}}) - (W,
c_W)}^S \to 0$ as $n \to \infty$. 
\end{proof}

\section{Riesz--Fischer theorem for metric space-valued maps}\label{AppB}

Recall the Riesz--Fischer theorem, which says that $\R^m$-valued $L^p$
functions form a complete (pseudo)metric space. In order to formulate one
of the main results of this paper (Theorem \ref{TclusterGen}) in complete
generality, it is of interest to understand if the Riesz--Fischer theorem
holds for more general spaces, such as Banach spaces or even metric
spaces. We now show the result holds for any complete metric space. We
provide a proof, as we were unable to find it in the literature.

\begin{theorem}[Riesz--Fischer for metric spaces]\label{TRF-metric}
Suppose $(\Omega,\mu)$ is a finite measure space, and $(X,d_X)$ is a
metric space. Given $1 \leq p < \infty$, let $L^p(\Omega,X)$ denote the
Borel-measurable functions $f : \Omega \to X$ such that for any
(equivalently, every) $x \in X$,
\[
\int_\Omega d_X(f(\omega), x)^p\ d \mu < \infty,
\]

\noindent and given $f,g \in L^p(\Omega,X)$, define
\[
d_p(f,g) := \left(\int_\Omega d_X(f(\omega),g(\omega))^p\ d
\mu\right)^{1/p}.
\]

\noindent Now if $(X,d_X)$ is a complete metric space, then $d_p$ equips
$L^p(\Omega,X)$ with the structure of a complete metric space.
\end{theorem}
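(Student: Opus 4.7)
The plan is to mimic the classical Riesz--Fischer argument, with the role of Minkowski and Fatou played by the real-valued function $\omega \mapsto d_X(f(\omega), g(\omega))$. First I would verify that $d_p$ is a well-defined pseudometric on $L^p(\Omega, X)$: independence of the integrability condition from the choice of reference point follows from the elementary estimate $d_X(f, x)^p \le 2^{p-1}(d_X(f, y)^p + d_X(y, x)^p)$ together with finiteness of $\mu$, and the triangle inequality for $d_p$ comes from the pointwise triangle inequality in $X$ combined with Minkowski's inequality for nonnegative real-valued $L^p$ functions.

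For completeness, I would take a $d_p$-Cauchy sequence $(f_n) \subset L^p(\Omega, X)$ and extract a rapidly Cauchy subsequence $(f_{n_k})$ with $d_p(f_{n_k}, f_{n_{k+1}}) \le 2^{-k}$. Setting $G_K(\omega) := \sum_{k=1}^K d_X(f_{n_k}(\omega), f_{n_{k+1}}(\omega))$, Minkowski gives $\|G_K\|_p \le 1$ uniformly in $K$, so monotone convergence produces a limit $G \in L^p(\Omega, \R)$, which is finite $\mu$-a.e. On this full-measure set the sequence $(f_{n_k}(\omega))_k$ is Cauchy in $X$, and completeness of $X$ furnishes a pointwise limit $f(\omega) := \lim_k f_{n_k}(\omega)$ (extending $f$ by a fixed $x_0 \in X$ on the null set). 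A first application of Fatou's lemma to $d_X(f_{n_k}(\omega), x_0)^p$ shows $f \in L^p(\Omega, X)$. A second application, namely $d_p(f_n, f)^p \le \liminf_k d_p(f_n, f_{n_k})^p$, combined with the Cauchy hypothesis, yields $d_p(f_n, f) \to 0$.

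The main technical obstacle, and the only place where the proof genuinely departs from the real-valued case, is the Borel-measurability of the pointwise limit $f$ when $X$ is not separable: a pointwise a.e.\ limit of Borel maps into a general metric space need not itself be Borel. I would handle this by noting that the $L^p$ condition, together with approximating each $f_n$ by simple functions in the $d_p$ pseudometric (equivalently, by passing to a strongly measurable representative), forces the essential range of every $f_n$ to lie in a separable closed subspace $X_0 \subset X$. On the full-measure good set the values $f_{n_k}(\omega)$ all lie in $X_0$, and within this separable target Borel-measurability of $f$ follows from the classical criterion that $\omega \mapsto d_X(f(\omega), x)$ is measurable for every $x$ in a countable dense subset of $X_0$.
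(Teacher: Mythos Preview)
Your core argument is correct and matches the paper's proof almost step for step: extract a rapidly Cauchy subsequence, control the telescoping distances via Minkowski, use Fatou (or monotone convergence) to get a.e.\ finiteness, invoke completeness of $X$ for the pointwise limit, and apply Fatou again for $L^p$ convergence. The only structural difference is that the paper first applies the Kuratowski embedding $\Phi_{x_0}: X \hookrightarrow C_b(X)$ and phrases the pointwise convergence as absolute convergence of the telescoping series $f_{n_1} + \sum_k (f_{n_{k+1}} - f_{n_k})$ in the ambient Banach space, whereas you argue directly that $(f_{n_k}(\omega))_k$ is Cauchy in $X$. Your route is slightly more elementary; the paper's buys nothing essential beyond making the Banach-space Riesz--Fischer citation available.

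Your final paragraph, however, is both unnecessary and not correctly argued. The claim that a pointwise limit of Borel maps into a non-separable metric space ``need not itself be Borel'' is false: for any closed set $C \subset X$, the map $x \mapsto d_X(x,C)$ is continuous, so $\omega \mapsto d_X(f_{n_k}(\omega), C)$ is real-valued measurable, and hence so is its pointwise limit $\omega \mapsto d_X(f(\omega), C)$; thus $f^{-1}(C)$ is measurable for every closed $C$, which suffices. No separability is needed. Moreover, your proposed workaround --- that the $L^p$ condition forces essential separability of the range --- is a Pettis-type statement about \emph{strongly} measurable functions, and you have not justified why a merely Borel-measurable $f_n$ into a non-separable $X$ admits simple-function approximations in $d_p$. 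Drop the last paragraph entirely; measurability of $f$ follows for free from the argument above.
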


\noindent As usual, we identify functions in $L^p(\Omega,X)$ that are
equal almost everywhere on $\Omega$.

\begin{proof}
First we reduce the situation to Banach spaces. Fix a point $x_0 \in X$,
and recall that the Kuratowski embedding $\Phi_{x_0} : X \to C_b(X)$
given by
\[
\Phi_{x_0}(x)(y) := d_X(x,y) - d_X(x_0,y)
\]

\noindent is an isometric embedding. Therefore, we may identify $X$ with
its image inside the Banach space $C_b(X)$ via $\Phi_{x_0}$. Now suppose
$f_n$ is a Cauchy sequence in $L^p(\Omega,\Phi_{x_0}(X)) \subset
L^p(\Omega,C_b(X))$. Note that the Riesz--Fischer theorem for maps in
$L^p(\Omega,C_b(X))$ is stated in \cite{Bochner-Taylor}, for instance,
and can be applied to show that $f_n \mapdef{L^p} f$ for some $f \in
L^p(\Omega,C_b(X))$. However, it is not immediate that $f \in
L^p(\Omega,\Phi_{x_0}(X))$, whence we provide a proof for completeness.

The proof follows \cite[Theorem 3.11]{Rudin_Real_Complex}. Since $f_n$
is Cauchy, there exists a sequence of integers $n_1 < n_2 < \cdots$, such
that if $m,n \geq n_k$ then $d_p(f_n,f_m) < 2^{-k}$. Now define
\[
g_k(\omega) := \sum_{j=1}^k d_X(f_{n_j}(\omega), f_{n_{j+1}}(\omega)),
\qquad
g(\omega) := \sum_{j=1}^\infty d_X(f_{n_j}(\omega), f_{n_{j+1}}(\omega)),
\]

\noindent where we will identify $d_X(x,x') = \|
\Phi_{x_0}(x) - \Phi_{x_0}(x') \| =: \| x - x' \|$ for $x,x' \in X$.
Now integrating on $\Omega$, we obtain by Minkowski inequality in
$L^p([0,1],\R)$ (and the choice of $n_k$) that $\| g_k \|_p < 1$ for all
$k$. It follows that $\| g \|_p \leq 1$ by Fatou's Lemma. In particular,
$g(\omega)$ is finite a.e.~$\mu$, whence the series
\begin{align*}
f(\omega) := &\ f_{n_1}(\omega) + \sum_{k=1}^\infty (f_{n_{k+1}}(\omega)
- f_{n_k}(\omega))\\
= &\ \Phi_{x_0}(f_{n_1}(\omega)) + \sum_{k=1}^\infty \left(
\Phi_{x_0}(f_{n_{k+1}}(\omega)) - \Phi_{x_0}(f_{n_k}(\omega)) \right)
\end{align*}

\noindent converges absolutely a.e.~$\mu$. Set $f(\omega) := 0 =
\Phi_{x_0}(x_0)$ on the remaining null set; then $f$ converges absolutely
on all of $\Omega$, hence converges on all of $\Omega$ in the Banach
space $C_b(X)$. Moreover, $f(\omega)$ is the pointwise limit of
$f_{n_k}(\omega) \in \Phi_{x_0}(X) \subset C_b(X)$.
Since $X$ and hence $\Phi_{x_0}(X)$ is complete, it follows that $f$ has
image in $\Phi_{x_0}(X)$.

It remains to show that $f_n \mapdef{L^p} f$ and $f \in
L^p(\Omega,\Phi_{x_0}(X))$. Fixing $\epsilon > 0$, there exists $N$ such
that $\| f_n - f_m \|_p < \epsilon$ for $n,m > N$. Hence if $m>N$, then
by Fatou's Lemma,
\[
\int_\Omega \| f(\omega) - f_m(\omega) \|^p\ d \mu \leq \lim_{\qquad k
\to \infty} \hspace*{-4mm} \inf \int_\Omega \| f_{n_k}(\omega) -
f_m(\omega) \|^p\ d \mu \leq \epsilon^p.
\]

\noindent It follows that $f - f_m \in L^p(\Omega,C_b(X))$, whence $f \in
L^p(\Omega,C_b(X))$ (and hence in $L^p(\Omega,\Phi_{x_0}(X))$ from
above). The preceding computation also shows that $\|f - f_m\|_p \to 0$
as $m \to \infty$, which concludes the proof.
\end{proof}

As an immediate consequence of Theorem \ref{TRF-metric}, we obtain that
continuous metric-valued node-level statistics automatically extend to
$\W_{[0,1]}$.

\begin{cor}\label{Cmetric-ext}
Let $(X,d_X)$ be a metric space and let $f : \G \to L^1([0,1],X)$ be a
continuous node-level statistic (see Definition \ref{Dnode-stat-metric}).
Then $f$ extends to a continuous function $f : \W_{[0,1]} \to
L^1([0,1],X)$.
\end{cor}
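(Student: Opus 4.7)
The plan is to extend $f$ by continuity from the dense subset $\G \subset \W_{[0,1]}$, invoking completeness of $L^1([0,1], X)$ established in Theorem \ref{TRF-metric}. First I would show that for each $W \in \W_{[0,1]}$ there exists a sequence $(G_n)_{n \geq 1} \subset \G$ with $\boxnorm{f^{G_n} - W} \to 0$: after approximating $W$ by a sufficiently fine step-function and independently sampling each block by Bernoulli draws with parameters $W(x,y)$, standard concentration arguments produce such a sequence almost surely along a diagonal subsequence. The continuity hypothesis on $f$ then shows that $(f(G_n))_{n \geq 1}$ is Cauchy in $L^1([0,1], X)$, and Theorem \ref{TRF-metric} yields a limit, which I take as the definition of $f(W)$.

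Next I would check that this extension is well-defined. If $G_n \to W$ and $G_n' \to W$ are two cut-norm approximations of $W$ by finite graphs, the interleaved sequence $G_1, G_1', G_2, G_2', \ldots$ also converges to $W$ in cut-norm, so by the continuity of $f$ on $\G$ the interleaved sequence of $L^1$-values is Cauchy, forcing the two candidate limits to agree. This produces an unambiguous extension $f : \W_{[0,1]} \to L^1([0,1], X)$ that restricts to the original map on $\G$.

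To verify continuity of the extension at an arbitrary $W_0 \in \W_{[0,1]}$, suppose $W_n \to W_0$ in cut-norm. Using the density and Cauchy-limit constructions above, for each $n$ I would select $G_n \in \G$ satisfying both $\boxnorm{f^{G_n} - W_n} < 1/n$ and $d_1(f(G_n), f(W_n)) < 1/n$; the triangle inequality then gives $\boxnorm{f^{G_n} - W_0} \to 0$, so continuity of $f$ on $\G$ yields $f(G_n) \to f(W_0)$ in $L^1([0,1], X)$, and a second triangle inequality gives $f(W_n) \to f(W_0)$. The hardest step will be the cut-norm density of $\G$ in $\W_{[0,1]}$ used at the start: graphs are classically only known to be dense up to measure-preserving rearrangement (that is, in the $\delta_\Box$ pseudometric), so density in the strict cut-norm requires a careful choice of labeling in the sampling argument. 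Once density is secured, the remaining steps are formal consequences of continuity on a dense subset valued in a complete metric space.
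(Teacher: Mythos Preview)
Your argument is correct and is precisely the standard extension-by-density argument that the paper leaves implicit (it merely calls the corollary ``an immediate consequence'' of Theorem \ref{TRF-metric} and gives no proof). Your flagged concern about density of $\G$ in the strict cut-norm $\boxnorm{\cdot}$, rather than only in $\delta_\Box$, is well-taken; the paper silently assumes this fact (it is invoked verbatim in the proof of Theorem \ref{thm:density} in Appendix \ref{Acolor}), and it does hold via the averaging-then-random-rounding construction you outline, with the concentration bound appearing explicitly in the paper's proof of Theorem \ref{thm:Top_Equiv}.
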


Note that the proof of Theorem \ref{TRF-metric} also implies the
following result, which may be interesting in its own right.

\begin{prop}
With $(\Omega,\mu)$ and $(X,d_X)$ as above, every Cauchy sequence in
$L^p(\Omega,X)$ converging to $f \in L^p(\Omega,X)$, has a subsequence
that converges a.e. $\mu$ to $f$. 
\end{prop}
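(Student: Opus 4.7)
The plan is to adapt the standard real-valued argument; in fact most of the work is already contained in the proof of Theorem \ref{TRF-metric}. Let $(f_n) \subset L^p(\Omega,X)$ be a Cauchy sequence with $L^p$-limit $f$. First extract a fast subsequence $n_1 < n_2 < \cdots$ with $d_p(f_{n_k}, f_{n_{k+1}}) < 2^{-k}$ for every $k$, which is possible because $(f_n)$ is Cauchy.

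Next, set $g(\omega) := \sum_{k=1}^\infty d_X(f_{n_k}(\omega), f_{n_{k+1}}(\omega))$. By Minkowski's inequality in $L^p(\Omega,\R)$, the partial sums $g_K$ have $L^p$-norm bounded by $\sum_{k=1}^{K} 2^{-k} < 1$, and Fatou's lemma then gives $\|g\|_p \leq 1$. Hence $g(\omega) < \infty$ for $\mu$-almost every $\omega$, and on that full-measure set the sequence $(f_{n_k}(\omega))_k$ is Cauchy in $X$; by completeness of $X$ it converges to some $\tilde f(\omega) \in X$. Extending $\tilde f$ by the basepoint $x_0$ on the remaining null set gives a Borel-measurable map $\tilde f : \Omega \to X$; measurability is checked by composing with the Kuratowski embedding $\Phi_{x_0} : X \hookrightarrow C_b(X)$ of Theorem \ref{TRF-metric}, since a.e.~pointwise limits of measurable $C_b(X)$-valued maps are measurable and $\Phi_{x_0}(X)$ is closed in $C_b(X)$ by completeness of $X$.

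It remains to identify $\tilde f$ with $f$. Applying Fatou's lemma to the nonnegative integrands $d_X(f_{n_k}(\cdot), f(\cdot))^p$ yields
\[
\int_\Omega d_X(\tilde f(\omega), f(\omega))^p\, d\mu \;\leq\; \liminf_{k \to \infty} \int_\Omega d_X(f_{n_k}(\omega), f(\omega))^p\, d\mu \;=\; 0,
\]
since $f_{n_k} \to f$ in $L^p$. Therefore $\tilde f = f$ $\mu$-a.e., so the subsequence $(f_{n_k})$ converges pointwise $\mu$-a.e.~to $f$. The only mild obstacle is the measurability verification for $\tilde f$, but this is handled routinely via the Kuratowski embedding already introduced in the proof of Theorem \ref{TRF-metric}.
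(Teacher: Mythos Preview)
Your proof is correct and follows essentially the same route as the paper: the paper does not give a separate argument for this proposition but simply notes that it is implicit in the proof of Theorem \ref{TRF-metric}, namely the fast-subsequence extraction, the summable tail estimate via Minkowski and Fatou, and the resulting pointwise a.e.\ convergence. Your added identification step $\tilde f = f$ via Fatou is exactly the adaptation needed when $f$ is given in advance as the $L^p$-limit, and your attention to measurability is, if anything, more careful than the paper's own treatment.
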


\bibliographystyle{plain}
\bibliography{biblio}

\end{document}